\newtheorem{theorem}{Theorem}[section]
\newtheorem{lemma}[theorem]{Lemma}
\newtheorem{proposition}[theorem]{Proposition}
\newtheorem{corollary}[theorem]{Corollary}
\theoremstyle{remark}
\newtheorem{remark}[theorem]{Remark}
\theoremstyle{definition}
\newtheorem{definition}[theorem]{Definition}
\newtheorem{rmks}{Remarks}[section]
\newtheorem*{Examples}{Examples}
\newcommand{\bd}[1]{\boldsymbol{#1}}
\newcommand{\A}{\mathcal{A}}
\newcommand{\B}{\mathscr{B}}
\renewcommand{\C}{\mathscr{C}}
\newcommand{\N}{\mathbb{N}}
\renewcommand{\Pr}{\mathbb{P}}
\renewcommand{\Q}{\mathbb{Q}}
\renewcommand{\R}{\mathbb{R}}
\renewcommand{\Z}{\mathbb{Z}}
\newcommand{\shift}[2]{#1^{\uparrow #2}}
\renewcommand{\epsilon}{\varepsilon}
\newcommand{\ML}{Martin-L\"{o}f }
\newcommand{\Fouche}{\text{Fouch\'{e} }}
\newcommand{\Morters}{\text{M\"{o}rters }}
\newcommand{\uh}{\upharpoonright}
\newcommand{\floor}[1]{\lfloor #1 \rfloor}
\newcommand{\cdim}{\mathrm{cdim}}
\newcommand{\indic}[1]{\bd{1}_{#1}}
\definecolor{purple}{rgb}{.7,0,.7}
	\definecolor{lightblue}{rgb}{.60,.60,1}
\newtheorem{question}{Question}
\title{On zeros of Martin-L\"of random Brownian motion}
\author{Kelty Allen}
\address{The University of California, Berkeley, Department of Mathematics, 719 Evans Hall \#1049, Berkeley, CA 94720-3840 USA }
\email{kelty@math.berkeley.edu}
\urladdr{math.berkeley.edu/~kelty/}
\author{Laurent Bienvenu}
\address{Laboratoire CNRS J.-V. Poncelet, 119002, Bolshoy Vlasyevskiy Pereulok 11, Moscow, Russia}
\email{laurent.bienvenu@computability.fr}
\urladdr{www.liafa.jussieu.fr/~lbienven/}
\author{Theodore A. Slaman}
\address{The University of California, Berkeley, Department of Mathematics, 719 Evans Hall \#3840, Berkeley, CA 94720-3840 USA }
\email{slaman@math.berkeley.edu}
\urladdr{math.berkeley.edu/~slaman/}
\keywords{Algorithmic randomness, Brownian motion, computable analysis}
\begin{document}

\begin{abstract}
We investigate the sample path properties of  \ML random Brownian motion. We show (1) that many classical results which are known to hold almost surely hold for every \ML random Brownian path, (2) that the effective dimension of zeroes of a \ML random Brownian path must be at least 1/2, and conversely that every real with effective dimension greater than 1/2 must be a zero of some \ML random Brownian path, and (3) we will demonstrate a new proof that the solution to the Dirichlet problem in the plane is computable. 
\end{abstract}

\maketitle



\section{Background and notation}\label{sec:background}

\subsection{Brownian motion}

Heuristically, Brownian motion is the random continuous function resulting from the limit of discrete random walks as the time interval approaches zero. The paths of Brownian motion are considered typical with respect to \emph{Wiener measure} on a function space, generally $C[0, 1]$, $C[0, \infty)$, or $C[I, \R^n]$ for $I = [0, 1]$ or $[0, \infty)$ The \ML random elements of a function space with respect to Wiener measure are known as \ML random Brownian motion. Fouch\'{e} showed that the class of \ML random Brownian motion is the same as the class of complex oscillations, a class of functions defined by Asarin and Pokrovskii \cite{AP} and later investigated to a greater degree by Fouch\'{e} \cite{FoucheArithmeticalRepresentation,FoucheDescriptiveComplexity,FoucheDynamics}, Davie and Fouch\'e~\cite{FoucheDavie},  Kjos-Hanssen, Nerode~\cite{KHN}, and Szabados~\cite{KHS}.

In this article, we continue the study of \ML random Brownian motion. We will demonstrate that many classical theorems which hold almost surely hold for every \ML random Brownian path, we will prove results toward a classification of the effective dimension of the zeroes of \ML random Brownian motion, and we will demonstrate a new proof that the solution to the Dirichlet problem in the plane is computable. 

We will use $2^{\omega}$ to denote infinite binary strings, which we will sometimes identify with reals on [0, 1]. We denote the space of continuous functions $f: [0, 1] \to \R$ and $f: \R^{\geq 0} \to \R$ by $C[0, 1]$ and $C[\R^{\geq 0}]$ respectively. For other cases, the space of continuous functions from a set $X$ to a set $Y$ will be denoted by $C(X, Y)$.

\emph{Standard (one dimensional) Brownian motion} is a real-valued stochastic process $\{\B(t): t \in I\}$ ($I=[0,1]$ or $I=[0,\infty)$) where the following hold. First, for any  $t_0 < t_1< ... < t_n$, the increments $\B(t_n) - \B(t_{n-1}), \B(t_{n-1}) - \B(t_{n-2}), ... , \B(t_2) - \B(t_1)$ are independent random variables. Second, for all $t\geq 0$ and $h > 0$, the increments $\B(t+h) - \B(t)$ are normally distributed with mean $0$ and variance~$h$. Third, $\B(0) = 0$ almost surely, and~$\B$ is almost surely continuous. These requirements induce a measure on a function space called Wiener measure, and which we will denote by $\Pr$. The values taken by the random variable~$\B$ are called \emph{sample paths}, or simply \emph{paths}.

It is possible to define Brownian motion starting at any point $x$ at time~$0$, rather than starting at the origin, in which case we will denote the corresponding measure by $\Pr_x$ (in other words, $\Pr_x(\B \in \mathcal{A}) = \Pr(x+\B \in \mathcal{A})$). When we wish to emphasize that we are talking about standard Brownian motion, we will use $\Pr_0$.  \\

We assume that the reader is familiar with algorithmic randomness and Kolmogorov complexity for binary sequences. One can consult the two books~\cite{DH,Nies} for a good overview of the subject. Furthermore, we assume some familiarity with Martin-L\"of randomness for computable probability spaces. G\'acs' lecture notes~\cite{Gacs-notes} and the two papers~\cite{HRAppofMLR,HRAppofEffProb} by Hoyrup and Rojas are the standard references on the subject. Our main reference for the classical theory of Brownian motion is the recent book by Morters and Peres~\cite{MP}.

\subsection{Effective aspects of Brownian motion}

The construction presented here is the Franklin-Wiener series representation of Brownian motion as found in~\cite{Kahane}. 

Let $\Delta_0(t)$ be the linear interpolation between the points $(0,0)$ and $(1,1)$. $\Delta_1(t)$ is the linear interpolation between points (0,0), (1/2,1/2), 
and (1,0). $\Delta_{i,j}(t)$ ($0<j < 2^i$) is the function  that linearly interpolates between $(j/2^i,0)$, $((j+1/2)/2^i, 2^{-j/2-1})$, and $((j+1/2)/2^i,0)$ and is equal to 0 everywhere else.

Classically, it is known that if $\xi_0, \xi_1, \{\xi_{i, j}\}_{i\in \N, j<2^i}$ are independent random variables following a normal distribution $\mathcal{N}(0,1)$, then the random variable~$\B$ defined by 
\[
\B(t) = \xi_{0} \Delta_{0}(t)  + \xi_{1} \Delta_{1}(t)  + \sum_{i} \sum_{j < 2^i} \xi_{i,j} \Delta_{i,j}(t)  
\]
is a standard Brownian motion on~$[0,1]$.

To extend Brownian motion to $C[\R^{\geq 0}]$, let $\{ \B_n(t) \}_{n \in \N}$ be independent Brownian motions on $C[0, 1]$. Then
\begin{equation}\label{eq:infinite-time}
\B(t) = \B_{\floor{t}}(t - \floor{t}) + \displaystyle \sum_{0 \leq i < \floor{t}} \B_i(1)
\end{equation}
satisfies the definition of Brownian motion for the space of continuous functions $C(\R^{\geq 0},\R)$.

In order to define Martin-L\"of randomness for Brownian motion, one needs to make sure that the space of continuous functions~$C[0,1]$ endowed with distance
\[
d(f,g) = ||f-g||_\infty
\]
and Wiener measure (denoted $\Pr$) is a computable probability space. 

The computability of $(C[0,1],\Pr)$ was proven by Fouch\'e and Davie~\cite{FoucheDavie,FoucheDescriptiveComplexity} (see next subsection for more details). One can take for dense set of points the piecewise linear functions which interpolate between finitely many points of rational coordinates, and for $p$ such a function and $r>0$ a rational number, the $\Pr$-measure of 
\[
\{f \mid ||f-p||_\infty < r\}
\]
is computable uniformly in a code for~$p$. 

Therefore, it is possible to define Martin-L\"of randomness for Brownian motion in the usual way: the \ML random elements of $(C[0,1],\Pr)$ are those which do not belong to the universal Martin-L\"of test $\bigcap_n \mathcal{U}_n$. To stress the difference between Brownian motion as a stochastic process and Martin-L\"of randomness on the space $(C[0,1],\Pr)$, we will use the cursive letter $\B$ for the random variable taking values in $C[0,1]$ and distributed according to~$\Pr$, and use the letter $B$ for individual elements of~$C[0,1]$. Recall that we refer to elements $B \in C[0,1]$ as (sample) paths, and therefore we will only talk about \emph{\ML random paths}, and not \ML random Brownian motion.  \\

Note that all of the above can be adapted in a straightforward way to the space $C[0,\infty)$, which by the above correspondence~(\ref{eq:infinite-time}) can be identified with $\omega$ copies of $(C[0,1],\Pr)$, endowed with the product measure $\Pr^\omega$.

\subsection{Layerwise computability}

Throughout the paper, we will make extensive use of the notion of \emph{layerwise computability} developed by Hoyrup and Rojas~\cite{HRAppofMLR,HRAppofEffProb}. Layerwise computability is a form of uniform relativisation: In computability theory, we often say that an element $y$ is \emph{computable in~$y$} if $y$ can be computed given~$x$ as oracle. We say that an expression $F(x)$ is \emph{computable uniformly in~ $x$} if~$F$ is a computable function on the space to which~$x$ belongs. There are many examples of this in computable analysis: $x^2$ is computable uniformly in~$x \in [0,1]$, the integral $\int f$ is computable uniformly in $f \in C[0,1]$ (endowed with the $||.||_\infty$ norm), etc.

Layerwise computability is a slightly weaker form of uniformity. First of all when we say that an expression $F(x)$ is layerwise computable, we only ask that it is defined for $x$ \ML random on the computable probability space~$\mathbb{X}$ it belongs to (see~\cite{HRAppofMLR,HRAppofEffProb} for the definition of computable probability space). Moreover, we only require uniformity on each ``layer" of~$\mathbb{X}$, uniformly in~$n$. A layer is a set of type $\mathcal{K}_n$, where $\mathcal{K}_n$ is the complement of $\mathcal{U}_n$, the $n$-th level of a universal Martin-L\"of test over~$\mathbb{X}$. An interesting aspect of layers is that they always are effectively compact, even if the space~$\mathbb{X}$ itself is not compact. So formally, we say that $F(x)$ is computable layerwise in~$x$ if there exists a partial computable function $G(.,.)$ such that $G(x,n)=F(x)$ for all~$x \in \mathcal{K}_n$. 

Layerwise computability is a very powerful tool to study constructive versions of classical results in probability theory and measure theory (as we shall see in this paper!). Perhaps the most important result using layerwise computability is the so-called `randomness preservation theorem':

\begin{theorem}[\cite{HRAppofMLR,HRAppofEffProb}]\label{thm:layerwise-computability}
Let $(\mathbb{X},\mu)$ be a computable probability space and $F$ a layerwise computable function over~$\mathbb{X}$ taking values in a computable metric space~$\mathbb{Y}$. Then:
\begin{itemize}
\item[(i)] The push forward measure~$\nu$ defined over $\mathbb{Y}$ by $\nu(\mathcal{A})=\mu(F^{-1}(\mathcal{A}))$ is computable
\item[(ii)] If $x$ is $\mu$-\ML random, then $F(x)$ is~$\nu$-\ML random. 
\item[(iii)] For every~$y \in \mathbb{Y}$ which is $\nu$-\ML random, there is some $\mu$-ML random $x \in \mathbb{X}$ such that $F(x)=y$. 
\end{itemize}
\end{theorem}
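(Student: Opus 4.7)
My plan is to exploit the defining feature of layerwise computability throughout: for each $n$, the restriction $F_n := F\uh\K_n$ is a uniformly computable function from the effectively compact layer $\K_n$ into $\mathbb{Y}$. I would prove the three items in the order (i), (iii), (ii), since (i) is a routine approximation, (iii) admits a very clean direct proof, and (ii) is the most technical.

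For (i), fix an effectively open set $\V \subseteq \mathbb{Y}$. Because $F_n^{-1}(\V) = F^{-1}(\V) \cap \K_n$ is effectively open in the effectively compact space $\K_n$, the quantity $a_n(\V) := \mu(F^{-1}(\V) \cap \K_n)$ is lower semicomputable uniformly in $n$ and a name for $\V$. The sequence $(a_n(\V))_n$ increases to $\nu(\V) = \mu(F^{-1}(\V))$ with error bounded by $\mu(\U_n) \leq 2^{-n}$; hence $\nu(\V)$ is sandwiched between $a_n(\V)$ and $a_n(\V) + 2^{-n}$, giving a uniform computable approximation and thus computability of $\nu$. Part (iii) has an even cleaner proof: set $\V_n := \mathbb{Y} \setminus F(\K_n)$. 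Since the image of an effectively compact set under a computable function is effectively compact, $F(\K_n) = F_n(\K_n)$ is effectively compact uniformly in $n$, so $\V_n$ is effectively open uniformly in $n$. Moreover $\nu(\V_n) = \mu(F^{-1}(\V_n)) \leq \mu(\U_n) \leq 2^{-n}$, so $\{\V_n\}$ is a $\nu$-\ML test; any $\nu$-\ML random $y$ escapes it, placing $y$ in $F(\K_{n_0})$ for some $n_0$, and any preimage $x \in \K_{n_0}$ is $\mu$-\ML random since $\K_{n_0} \cap \U_{n_0} = \emptyset$.

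For (ii) I would argue by contrapositive: given a $\nu$-\ML test $\{\V_n\}$ with $F(x) \in \bigcap_n \V_n$, I construct a $\mu$-\ML test $\{\mathcal{W}_m\}$ with $x \in \bigcap_m \mathcal{W}_m$. The heart of the argument is producing, for each pair $(n,k)$, a c.e.\ open set $\mathcal{O}_{n,k} \subseteq \mathbb{X}$ with $F^{-1}(\V_n) \cap \K_k \subseteq \mathcal{O}_{n,k}$ and $\mu(\mathcal{O}_{n,k}) \leq \nu(\V_n) + 2^{-(n+k)}$, uniformly in $n,k$. Given these, set $\mathcal{W}_m := \bigcup_k \mathcal{O}_{m+k+1,k}$: a geometric-series estimate gives $\mu(\mathcal{W}_m) = O(2^{-m})$, and any $\mu$-\ML random $x$ lies in some $\K_{k_0}$, so $F(x) \in \V_{m+k_0+1}$ forces $x \in \mathcal{O}_{m+k_0+1, k_0} \subseteq \mathcal{W}_m$ for every $m$, the desired contradiction. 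The main obstacle is constructing the $\mathcal{O}_{n,k}$ within the prescribed measure budget: the naive enumeration of every basic ball $B$ with $F(B \cap \K_k) \subseteq \V_n$ can inflate measure arbitrarily, since such balls may protrude far outside $\K_k$. I would resolve this by combining the lower semicomputability of $\nu(\V_n)$ established in (i) with effective outer regularity of $\mu$: enumerate finite unions of balls only when the running $\mu$-measure can be verified to stay within the budget $\nu(\V_n) + 2^{-(n+k)}$, and use compactness and continuity of $F_k$ to check that the resulting open set still exhausts $F^{-1}(\V_n) \cap \K_k$.
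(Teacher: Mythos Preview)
The paper does not prove this theorem: it is quoted from Hoyrup and Rojas~\cite{HRAppofMLR,HRAppofEffProb} without proof, so there is no in-paper argument to compare against. Your arguments for~(i) and~(iii) are standard and correct; the image $F(\K_n)$ is effectively compact uniformly in~$n$, and the resulting test does exactly what you say.

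For~(ii), however, you have made life harder than necessary, and your proposed resolution of the ``main obstacle'' is not convincing as written. Verifying that a running $\mu$-measure stays \emph{below} a budget is not generally possible, since measures of finite unions of basic balls are only lower semicomputable; and once you start omitting balls to respect the budget, it is not clear how ``compactness and continuity of $F_k$'' lets you certify that you have nonetheless covered all of $F^{-1}(\V_n)\cap\K_k$. Fortunately the tight bound $\mu(\mathcal{O}_{n,k})\le\nu(\V_n)+2^{-(n+k)}$ is not needed at all. For each pair $(n,k)$ take any effectively open $O_{n,k}\subseteq\mathbb{X}$ with $O_{n,k}\cap\K_k=F^{-1}(\V_n)\cap\K_k$ (this exists uniformly since $F_k$ is computable on~$\K_k$), and set
\[
\mathcal{W}_m \;=\; O_{m,m}\cup\U_m.
\]
As a set, $\mathcal{W}_m=(F^{-1}(\V_m)\cap\K_m)\cup\U_m$, so $\mu(\mathcal{W}_m)\le\nu(\V_m)+2^{-m}\le 2^{-m+1}$. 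If $x$ is $\mu$-\ML random and $F(x)\in\bigcap_n\V_n$, then for every~$m$ either $x\in\U_m\subseteq\mathcal{W}_m$, or $x\in\K_m$ and $F(x)\in\V_m$, whence $x\in O_{m,m}\cap\K_m\subseteq\mathcal{W}_m$. Thus $x\in\bigcap_m\mathcal{W}_m$, contradicting randomness. This single-layer-per-level construction sidesteps the measure-budgeting issue entirely and is the argument you should use.
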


This theorem can for example be used to prove that $C[0,1]$ with the $||.||_\infty$ norm and Wiener measure is a computable probability space (as alluded to in the previous subsection). Indeed, Fouche and Davie proved that the function $\Phi$ which maps a sequence of reals $\xi_0, \xi_1, \{\xi_{i, j}\}_{i\in \N, j<2^i}$ to the function 
\[
\B(t) = \xi_{0} \Delta_{0}(t)  + \xi_{1} \Delta_{1}(t)  + \sum_{i} \sum_{j < 2^i} \xi_{i,j} \Delta_{i,j}(t)  
\]
is layer wise computable from $\mathbb{X}$ to $(C[0,1], ||.||_\infty)$, where $\mathbb{X}$ is the space of sequences of real numbers where each coordinate is distributed according to the normal distribution~$\mathcal{N}(0,1)$ independently of the others. It is obvious that $\mathbb{X}$ is a computable probability space. Thus, by the above theorem, the measure induced by~$\Phi$ on $(C[0,1], ||.||_\infty)$, which we know to be Wiener measure, is a computable measure. \\

Another important result we will need in several occasions is that one can compute the integral of layerwise computable functions. 

\begin{theorem}[\cite{HRAppofMLR}]\label{thm:layerwise-integral}
Let $f$ be a bounded layerwise computable function defined on some computable probability space $(\mathbb{X},\mu)$. Then the integral
\[
\int_{x \in \mathbb{X}} f(x)\, d\mu(x)
\]
is computable uniformly in an index of~$f$ and a bound for it. 
\end{theorem}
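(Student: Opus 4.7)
The proof combines two facts: the layer $\K_n$ is effectively compact, uniformly in~$n$, with $\mu(\U_n) \le 2^{-n}$; and on $\K_n$ the function $f$ is computable with a computable modulus of continuity, uniformly in~$n$.

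My first step would be to split the integral as
\[
\int_\X f\, d\mu \;=\; \int_{\K_n} f\, d\mu \;+\; \int_{\U_n} f\, d\mu,
\]
and observe that the tail piece has absolute value at most $M \cdot 2^{-n}$, where $M$ is the given bound on~$|f|$. Hence to achieve precision $2^{-k}$ it suffices to compute $\int_{\K_n} f\, d\mu$ to precision $2^{-k-1}$, for $n$ chosen so that $M \cdot 2^{-n} < 2^{-k-1}$.

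To compute $\int_{\K_n} f\, d\mu$, I would apply an effective Tietze extension theorem for computable metric spaces to uniformly construct, from the index of~$f$ and the bound~$M$, a total computable function $g_n \colon \X \to [-M, M]$ which agrees with $f$ on $\K_n$. Then $f$ and $g_n$ coincide on $\K_n$ and differ by at most $2M$ on $\U_n$, so
\[
\Bigl| \int_\X f\, d\mu - \int_\X g_n\, d\mu \Bigr| \;\le\; 2M \cdot 2^{-n}.
\]
The integral $\int_\X g_n\, d\mu$ is in turn computable by the standard fact that any bounded computable function on a computable probability space has computable integral: apply the layer-cake formula to $g_n + M \ge 0$, noting that the sets $\{g_n + M > t\}$ are effectively open uniformly in~$t$, which gives lower semicomputability of $\int (g_n+M)\, d\mu$, while the symmetric argument for $2M - (g_n + M)$ yields upper semicomputability.

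Putting this together, the algorithm reads a code for~$f$, the bound~$M$, and a target precision $2^{-k}$; picks $n$ with $2M \cdot 2^{-n} < 2^{-k-1}$; and outputs a dyadic rational within $2^{-k-1}$ of $\int g_n\, d\mu$. The main hurdle is the uniform effective Tietze construction of the $g_n$'s in an arbitrary computable metric space. This is known in computable analysis, and can alternatively be realized concretely by choosing a finite open cover of $\K_n$ on balls of radius small enough that $f$ oscillates by less than~$\epsilon$ on each intersection with $\K_n$, attaching a computable partition of unity, and substituting $f$-values sampled at chosen centers; the uniformity in~$n$ is guaranteed by the uniform effective compactness of the layers.
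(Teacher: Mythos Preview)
The paper does not give its own proof of this theorem: it is quoted from Hoyrup and Rojas~\cite{HRAppofMLR} and stated without argument, so there is nothing to compare your proposal against on the paper's side.

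That said, your outline is sound and is essentially the standard proof. One small expository wrinkle: you announce that you will compute $\int_{\K_n} f\, d\mu$, but what your Tietze-extension step actually delivers is an approximation to the \emph{full} integral $\int_\X f\, d\mu$ (via $\int_\X g_n\, d\mu$), not to the restricted integral; the logic is fine, but the wording could be tightened. The effective Tietze extension on an arbitrary computable metric space is indeed the nontrivial ingredient; your fallback via a finite cover of the effectively compact layer $\K_n$ and a subordinate partition of unity is a legitimate way to realise it concretely and avoids invoking the general theorem as a black box.
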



\section{Basic properties of Martin-L\"of random paths}

We begin by showing that the main ``almost sure" properties of classical Brownian motion hold for Martin-L\"of random paths. 

\subsection{Scaling theorem}\label{subsec:scaling}

The classical scaling theorem states that the map $B(t) \mapsto \frac{1}{a}B(a^2 t)$ is a Wiener-measure-preserving map from $C[0,1]$ to $C[0,1]$ (or $C[0, \infty) \to C[0, \infty))$ see for example Lemma~1.7 in~\cite{MP}. For \ML random paths, we have the following. 

\begin{proposition}
Let $B$ be a \ML random path of $C[0,1]$ (resp.\ of $C[\R^{\geq 0}]$). Then $\frac{1}{a}B(a^2 t)$ is also a \ML random path of $C[0,1]$ (resp.\ of $C[\R^{\geq 0}]$)  whenever~$B$ is random relative to~$a$.
\end{proposition}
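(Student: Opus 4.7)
The plan is to apply the randomness preservation theorem, Theorem~\ref{thm:layerwise-computability}(ii), relativized to~$a$. Consider the dilation map $\Phi_a$ defined by $\Phi_a(B)(t) = \frac{1}{a}\,B(a^2 t)$; in the $C[0,1]$ case this implicitly requires $a^2 \le 1$, while in the $C[\R^{\geq 0}]$ case any $a>0$ is admissible. The classical scaling theorem recalled just above the statement is precisely the assertion that $\Phi_a$ pushes $\Pr$ forward to $\Pr$, so the measure-theoretic ingredient is already in hand.

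What remains is to show that $\Phi_a$ is computable as a map $C[0,1] \to C[0,1]$ (resp.\ $C[\R^{\geq 0}] \to C[\R^{\geq 0}]$), uniformly in the oracle~$a$. I would verify this directly using the sup-norm representation: given a rational piecewise linear function $p$ with $\|B-p\|_\infty < \epsilon$, the function $\frac{1}{a}\,p(a^2 t)$ is again piecewise linear, with breakpoints and values computable from $a$ and~$p$, and it approximates $\Phi_a(B)$ in sup-norm to within $\epsilon/|a|$. Since computability uniformly in~$a$ is stronger than layerwise computability relative to~$a$, Theorem~\ref{thm:layerwise-computability}(ii) applies.

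Relativizing Theorem~\ref{thm:layerwise-computability}(ii) to~$a$ then yields that if $B$ is $\Pr$-\ML random relative to~$a$, so is $\Phi_a(B)$, and in particular $\Phi_a(B)$ is \ML random. The $C[\R^{\geq 0}]$ case is handled identically, using the product-space representation described around equation~(\ref{eq:infinite-time}). I do not expect any significant obstacle: once the uniform computability of~$\Phi_a$ in~$a$ is spelled out, the proposition is a formal consequence of Theorem~\ref{thm:layerwise-computability} and the classical scaling theorem.
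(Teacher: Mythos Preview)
Your proposal is correct and follows exactly the paper's approach: the paper's proof is the one-sentence observation that the map $B(t) \mapsto \frac{1}{a}B(a^2 t)$ is $a$-computable and measure-preserving, hence preserves Martin-L\"of randomness relative to~$a$ by Theorem~\ref{thm:layerwise-computability} relativized to~$a$. You have simply spelled out the computability verification in more detail than the paper does.
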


\begin{proof}
The map $B(t) \mapsto \frac{1}{a}B(a^2 t)$ is $a$-computable measure preserving, therefore it preserves Martin L\"{o}f randomness relative to~$a$ by Theorem \ref{thm:layerwise-computability} relativized to~$a$.
\end{proof}


\subsection{Constructive strong Markov property}

The strong Markov property of Brownian motion asserts the following. Let~$T$ be a stopping time, that is, a random variable in $[0,\infty]$ which is a function of~$\B$, and such that deciding whether $\{T \leq t\}$ depends only on $\B \uh [0,t]$ (the restriction of $B$ to the interval $[0,t]$). If~$T(\B)$ is almost surely finite, then the process $\widehat{\B}$ defined by $\widehat{\B}(t) = \B(T(\B)+t)-\B(T(\B))$ is a Brownian motion independent of $\B \uh [0,T(\B)]$. 

From its classical version, we can derive a constructive version of the strong Markov property which will be very useful in the sequel. 

\begin{proposition}
Let $T$ be a layerwise computable stopping time. Then the function
\[
B(t) \mapsto \widehat{B}(t) 
\]
where $\widehat{B}(t) = B(T(B)+t)-B(T(B))$ is layerwise computable and if $B$ Martin-L\"of random, then $\widehat{B}$ is Martin-L\"of random relative to~$B \uh [0,T(B)]$. 
\end{proposition}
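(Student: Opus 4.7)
The plan is to split the statement into two parts: first establishing layerwise computability of $B \mapsto \widehat{B}$, then deducing the relative randomness of $\widehat{B}$ from the randomness preservation theorem together with van Lambalgen's theorem. For layerwise computability, fix $n$ and work with $B \in \mathcal{K}_n$. By hypothesis $T$ is layerwise computable, so $T(B) \in [0,\infty)$ is a computable real uniformly in $n$ and a name for $B$. The evaluation map $(f,s) \mapsto f(s)$ is computable on $C[0,\infty) \times [0,\infty)$, so the values $B(T(B))$ and $B(T(B)+t)$ are computable uniformly in $t$, and hence so is the continuous function $\widehat{B}(t) = B(T(B)+t) - B(T(B))$. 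This already gives the layerwise computability of $B \mapsto \widehat{B}$ as a map into $C[0,\infty)$.

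For the relative randomness claim, I introduce the joint map $\Psi(B) = (\widetilde{B}, \widehat{B})$, where $\widetilde{B}(s) = B(\min(s, T(B)))$ is the path stopped and held constant at time $T(B)$; this $\widetilde{B} \in C[0,\infty)$ encodes the same data as $B \uh [0,T(B)]$, since $T(B)$ can be recovered as the infimum of the set of times beyond which $\widetilde{B}$ is constant, while conversely $\widetilde{B}$ is computable from $T(B)$ together with $B \uh [0,T(B)]$. The map $\Psi$ is layerwise computable by the same argument as above, so Theorem~\ref{thm:layerwise-computability} gives (i) a computable pushforward measure $\nu = \Psi_* \Pr^{\omega}$ on $C[0,\infty) \times C[0,\infty)$ and (ii) that $\Psi(B)$ is $\nu$-\ML random whenever $B$ is. The classical strong Markov property asserts precisely that under $\Pr^{\omega}$ the process $\widehat{B}$ is Wiener distributed and independent of $\widetilde{B}$, so $\nu$ factors as $\mu \otimes \Pr^{\omega}$, with $\mu$ the (now computable) law of $\widetilde{B}$. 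Van Lambalgen's theorem for computable product probability spaces then yields that $\widehat{B}$ is $\Pr^{\omega}$-\ML random relative to $\widetilde{B}$, which is the same as being random relative to $B \uh [0,T(B)]$.

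The main obstacle I anticipate is bookkeeping rather than substance: one must verify that $\widetilde{B}$ is an effectively faithful representation of the pair $(T(B), B \uh [0,T(B)])$, so that relative randomness with respect to one coincides with relative randomness with respect to the other, and one must invoke (or re-verify, in the computable-probability-space setting) van Lambalgen's theorem for effective product measures. The analytic content is entirely imported from the classical strong Markov property; the effective content lies in checking layerwise computability of $\Psi$ and in correctly routing the relative randomness claim through van Lambalgen.
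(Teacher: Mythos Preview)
Your approach is correct and reaches the same conclusion, but the route differs from the paper's in one interesting respect. You push forward the single Wiener measure along $\Psi(B)=(\widetilde{B},\widehat{B})$ and obtain a product $\mu\otimes\Pr^{\omega}$ with a \emph{new} computable law $\mu$ on stopped paths, then invoke van Lambalgen for that product. The paper instead introduces an auxiliary independent Brownian motion $B_2$ and considers the map
\[
(B_1,B_2)\ \longmapsto\ \bigl((B_1\uh[0,T(B_1)])^{\frown}B_2,\ \widehat{B_1}\bigr)
\]
on $C[0,\infty)\times C[0,\infty)$ with the product Wiener measure; by the strong Markov property this map is measure-\emph{preserving}, so randomness preservation and van Lambalgen are applied entirely within Wiener measure. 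The payoff of the paper's detour is that the first coordinate $(B_1\uh[0,T(B_1)])^{\frown}B_2$ is itself a Martin-L\"of random Brownian path, hence lies in the domain of the layerwise computable stopping time $T$, and the stopping-time property gives $T\bigl((B_1\uh[0,T(B_1)])^{\frown}B_2\bigr)=T(B_1)$ directly; this cleanly yields that $B_1\uh[0,T(B_1)]$ is computable from the first coordinate. In your version the analogous step is the identification of $\widetilde{B}$ with $(T(B),B\uh[0,T(B)])$, which you correctly flag as bookkeeping; note, however, that your proposed recovery of $T(B)$ as ``the first time after which $\widetilde{B}$ is constant'' is not obviously computable from $\widetilde{B}$ alone (constancy on an unbounded interval is a $\Pi^0_1$ condition), and you cannot simply apply the layerwise map $T$ to $\widetilde{B}$ since $\widetilde{B}$ is not random. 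The cleanest fix is either to declare $\widetilde{B}$ to \emph{be} the intended representation of $B\uh[0,T(B)]$, or to adopt the paper's trick of concatenating with a fresh random tail so that $T$ can be re-evaluated.
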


\begin{proof}
Consider the product space $C[0,\infty) \times C[0,\infty)$ endowed with the product measure $W \times W$. Consider the map
\[
(B_1,B_2) \mapsto \left( (B_1 \uh [0,T(B_1)])^\frown B_2\, , \, \widehat{B_1} \right)
\]
from $C[0,\infty) \times C[0,\infty)$ into itself, where $(B_1 \uh [0,T(B_1)])^\frown B_2$ is the concatenation of~$B_1$ up to time $T(B_1)$ and then continued according to $B_2$:
\[
\Big( (B_1 \uh [0,T(B_1)])^\frown B_2 \Big)  (t)= \left\{ \begin{array}{ll} B_1(t)~ \text{if}~  t \leq T(B_1)\\ B_1(T(B_1))+B_2(t-T(B_1))~ \text{if}~ t \geq T(B_1) \end{array} \right.
\]
By the strong Markov property, this map is measure preserving and it is layerwise computable since $B_1 \mapsto T(B_1)$ is. Thus, if $(B_1,B_2)$ is \ML random, the pair $ \left( (B_1 \uh [0,T(B_1)])^\frown B_2\, , \, \widehat{B_1} \right)$ is also \ML random, and thus, by van Lambalgen's theorem, $\widehat{B_1}$ is random relative to $(B_1 \uh [0,T(B_1)])^\frown B_2$. Since $T$ is a stopping time, $(B_1 \uh [0,T(B_1)])^\frown B_2$ computes $T(B_1)$ and thus computes $B_1 \uh [0,T(B_1)]$. Therefore, $\widehat{B_1}$ is \ML random relative to $B_1 \uh [0,T(B_1)]$.
\end{proof}

\subsection{Continuity properties}

In his paper establishing many of the local properties of \ML random Brownian motion~\cite{FoucheDynamics}, \Fouche shows every \ML random Brownian motion obeys a modulus of continuity $\phi(h)$ such that

$$
\limsup_{h \to 0} \sup_{0\leq t \leq 1-h} \frac{|B(t+h) - B(t)|}{\phi(h)} \leq 1
$$
and 
\begin{equation}\label{eqn:ModOfCont}
\phi(h) = O\left(\sqrt{h \log(1/h)}\right)
\end{equation}

It is possible to extend this result with big-O notation to the particular constant ($\sqrt{2}$) from the classical result, and moreover, while the classical result demonstrates that the modulus of continuity holds for ``sufficiently small"~$h$, we will demonstrate that ``sufficiently small" is layerwise computable from a \ML random path.

\begin{proposition} \label{prop:variation-sqrt-h-log-h}
Let $B$ be a ML random Brownian motion. Then for all $c < \sqrt{2}$, for all~$h_0$, there exists $h<h_0$ such that
\[
|B(t+h)-B(t)| > c \sqrt{h \log(1/h)}
\]
\end{proposition}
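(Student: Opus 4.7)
The plan is to prove the contrapositive in effective-measure form: for each pair of rationals $(c, h_0)$ with $c < \sqrt{2}$ and $0 < h_0 < 1$, the set
\[
N_{c, h_0} = \{B \in C[0,1] : \forall h \in (0,h_0),\ \forall t \in [0, 1-h],\ |B(t+h) - B(t)| \leq c\sqrt{h\log(1/h)}\}
\]
is Martin-L\"of null. (The free $t$ in the statement is presumably meant to be existentially quantified, which matches the classical L\'evy lower bound for the modulus of continuity.) Since there are only countably many rational pairs $(c, h_0)$, a Martin-L\"of random $B$ then avoids every $N_{c, h_0}$, giving the proposition.

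To cover $N_{c, h_0}$ by an effectively open set of small measure, pick a rational $c' \in (c, \sqrt{2})$ and, for each $k$ with $2^{-k} < h_0$, set
\[
A_k = \left\{B : \forall j < 2^k,\ |B((j+1)2^{-k}) - B(j\, 2^{-k})| < c'\sqrt{2^{-k} \cdot k \log 2}\right\}.
\]
Each $A_k$ is effectively open because it is a finite intersection of strict inequalities involving the computable continuous evaluation functionals $B \mapsto B(j\, 2^{-k})$. Specializing the defining condition of $N_{c, h_0}$ to $h = 2^{-k}$ and $t = j\, 2^{-k}$ and using $c < c'$ shows $N_{c, h_0} \subseteq A_k$ for all $k$ with $2^{-k} < h_0$.

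The bulk of the work is the measure estimate. The $2^k$ increments appearing in the definition of $A_k$ are i.i.d.\ $\mathcal{N}(0, 2^{-k})$, so
\[
\Pr(A_k) = (1 - q_k)^{2^k} \quad \text{where}\ q_k = \Pr\!\left[|Z| \geq c'\sqrt{k \log 2}\right],\ Z \sim \mathcal{N}(0,1).
\]
The standard Mills-ratio lower bound on the Gaussian tail yields $q_k \geq C \cdot 2^{-{c'}^2 k / 2}/\sqrt{k}$ for an explicit constant $C = C(c') > 0$. Since ${c'}^2 < 2$, this gives $2^k q_k \geq C \cdot 2^{(1 - {c'}^2/2)k}/\sqrt{k}$, which tends to infinity exponentially in $k$; hence $\Pr(A_k) \leq \exp(-2^k q_k)$ decays doubly exponentially. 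We may therefore computably choose $k_n$ (depending on $c, c', h_0, n$) with $2^{-k_n} < h_0$ and $\exp(-2^{k_n} q_{k_n}) < 2^{-n}$; the sequence $(A_{k_n})_n$ is then a Martin-L\"of test covering $N_{c, h_0}$.

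The main technical point---or at least the only nonroutine computation---is turning the exponential decay of $q_k$ into the doubly exponential decay of $\Pr(A_k)$ and checking that the resulting $k_n$ is computable uniformly in the parameters. Effective openness of $A_k$ in the sup-norm topology on $C[0,1]$, independence of dyadic increments of Brownian motion, and the fact that a countable (effective) union of Martin-L\"of null classes indexed by rational parameters is again Martin-L\"of null are all standard and finish the argument.
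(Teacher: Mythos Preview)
Your proposal is correct and follows essentially the same strategy as the paper's proof: partition $[0,1]$ into equal subintervals, use independence of the increments together with the lower Gaussian tail bound to show that the probability of \emph{all} increments being small decays doubly exponentially, and conclude via an effective null set. The only cosmetic differences are that the paper works in base~$e$ (intervals of length $e^{-n}$) rather than base~$2$, uses the original constant~$c$ directly (assumed computable) instead of your intermediate rational~$c'$, and phrases the result as a Solovay test rather than explicitly extracting a Martin-L\"of test---none of which affects the substance of the argument.
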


\begin{proof}
For a large $n$ (to be specified later), split the interval $[0,1]$ into chunks of size~$e^{-n}$ (omitting the last bit). For each $0 \leq k < e^n$, consider the event 
\[
A_k: |B((k+1)e^{-n})-B(k e^{-n})| \geq c \sqrt{e^{-n} n}
\]
(i.e., what we want, with $h=e^{-n}$)

Note that the $A_k$ are independent by definition of Brownian motion and by time-translation invariance, all have the same probability. Let us estimate the probability of $A_0$, which is the event: $|B(e^{-n})-B(0)| \geq c \sqrt{e^{-n} n}$. By scaling, it is also equal to the probability of the event: $|B(1)-B(0)| \geq c \sqrt{n}$. By the estimate given in Peres-Morters (Lemma 12.9), we have 
\[
\Pr(A_0) \geq \frac{c \sqrt{n}}{c^2 n +1} e^{- c^2 n/2}
\]
so, by assumption on $c$, there exists an $\alpha < 1$ such that for almost all~$n$
\[
\Pr(A_0) \geq e^{-\alpha n}
\]
Since the $A_k$ are independent, 
\[
\Pr(\text{no $A_k$ happens}) \leq (1- e^{-\alpha n})^{e^n} \sim e^{-e^{(1-\alpha)n}}
\]
Thus for $n$ taken large enough, this can be made arbitrarily small. Moreover, notice that $c$ can be supposed to be computable, which makes the $A_k$ $\Pi^0_1$ classes, hence the event ``no $A_k$ happens" corresponds to a $\Sigma^0_1$ class. Thus, we have a Solovay test that any \ML random Brownian motion should pass, and for such a \ML random Brownian path, there are infinitely many~$n$ for which some $A_k$ happens. 
\end{proof}

\begin{proposition}\label{prop:modulus-of-continuity-2}
Let $B$ be a ML random Brownian motion. Then for all $c > \sqrt{2}$, there is~$h_0$, such that for all $h<h_0$ and all~$t$
\[
|B(t+h)-B(t)| < c \sqrt{h \log(1/h)}
\]

Moreover, $h_0$ is layerwise computable in $B$.
\end{proposition}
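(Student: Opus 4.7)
The plan is to effectivize the classical proof of L\'evy's modulus of continuity: I will use a Martin-L\"of test at dyadic scales to control the maximal dyadic increments of $B$ and then invoke a chaining argument to extend the bound to arbitrary $h$. Layerwise computability of $h_0$ will fall out of the ML-test structure.

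I would fix a rational $c_0$ with $\sqrt 2 < c_0 < c$ and, for each $n \geq 1$, define $V_n \subseteq C[0,1]$ to be the open set of paths $B$ for which
$$\max_{0 \leq k < 2^n}\,\bigl|B((k+1)2^{-n}) - B(k\cdot 2^{-n})\bigr| \,>\, c_0 \sqrt{2^{-n}\log 2^n}.$$
A Gaussian tail estimate together with a union bound over the $2^n$ dyadic increments gives $\Pr(V_n) \leq 2\cdot 2^{\,n(1-c_0^2/2)}$, which is summable since $c_0^2/2 > 1$. The $V_n$ are uniformly $\Sigma^0_1$ with computable measures, so one can compute integers $M(k)$ with $\sum_{n \geq M(k)} \Pr(V_n) < 2^{-k}$; the sets $U_k := \bigcup_{n \geq M(k)} V_n$ then form a Martin-L\"of test. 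By optimality of the universal test there is a computable constant $d$ with $U_{k+d} \subseteq \mathcal{U}_k$, so for any layer index $n_B$ with $B \in \mathcal{K}_{n_B}$ the integer $N := M(n_B + d)$---which is computable from $n_B$---satisfies $B \notin V_n$ for every $n \geq N$. In other words, every dyadic increment of $B$ at depth at least $N$ obeys the bound with constant $c_0$.

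From this dyadic control, I would apply the chaining argument underlying L\'evy's theorem (cf.\ M\"orters--Peres, Theorem~1.14) to produce $h_0 = 2^{-N'}$, with $N'$ computable from $N$, such that $|B(t+h)-B(t)| < c\sqrt{h\log(1/h)}$ for every admissible $t$ and every $h < h_0$. Because $N$ and $N'$ both depend computably on $n_B$, the value $h_0$ is layerwise computable in $B$, which is exactly what is claimed.

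The main obstacle is the chaining step: a naive chain of dyadic increments introduces a multiplicative overhead strictly greater than $1$ in front of $\sqrt{h\log(1/h)}$, which on its own would prevent us from approaching the sharp constant $\sqrt 2$. To reach any $c > \sqrt 2$ I would instead use the sharper form of the chaining appearing in the classical proof---for instance, by replacing the dyadic maxima in $V_n$ with suprema over dyadic sub-intervals of $B$ (which can still be estimated via the reflection principle) and carefully accounting for how many sub-intervals at each scale are needed to link a point to its nearest dyadic rational. These refinements remain uniformly $\Sigma^0_1$ with computable measures, so the Martin-L\"of test argument above applies without change and layerwise computability of $h_0$ is preserved.
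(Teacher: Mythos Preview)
Your Martin-L\"of test construction at dyadic scales is correct and delivers the layerwise computability of $h_0$ in essentially the same way as the paper. The gap is in the chaining step, and the fix you sketch does not close it.

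Replacing endpoint increments by suprema over dyadic sub-intervals (via the reflection principle) still only yields, for arbitrary $s<t$ with $2^{-n-1}\le t-s<2^{-n}$, a bound of the form $|B(t)-B(s)|\le C\,c_0\sqrt{2^{-n}\log 2^n}$ with $C\ge 2$: the interval $[s,t]$ can straddle two dyadic level-$n$ intervals, and comparing $2^{-n}$ with $h$ costs a further factor. No ``careful accounting for how many sub-intervals at each scale'' removes this, because every interval you control has length differing from $h$ by a factor bounded away from~$1$.

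The paper's proof (which \emph{is} the M\"orters--Peres argument you cite, with layerwise bookkeeping) avoids this by working over a richer family $\Lambda_n(m)$ of intervals---dyadic intervals rescaled and shifted by fractions $a,b\in\{0,1/m,\dots,(m-1)/m\}$---so that any $[s,t]$ is matched by some $[s',t']\in\Lambda(m)$ with $|s-s'|,|t-t'|<\epsilon(t-s)$, where $\epsilon=\epsilon(m)$ can be made arbitrarily small. The main term $|B(t')-B(s')|$ then carries the constant $c_0$ with no multiplicative loss. The residual terms $|B(s)-B(s')|$ and $|B(t)-B(t')|$ are \emph{not} handled by the same test but by Fouch\'e's prior, non-sharp modulus of continuity $|B(u+h)-B(u)|=O(\sqrt{h\log(1/h)})$, contributing only $O(\sqrt\epsilon)\sqrt{(t-s)\log(1/(t-s))}$. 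Choosing $\epsilon$ small brings the total constant below~$c$. Your proposal is missing both ingredients: the $\Lambda_n(m)$ family that approximates arbitrary intervals to within relative error~$\epsilon$, and the appeal to a separate crude modulus of continuity for the residual pieces.
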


The proof is the same as that of \Morters and Peres Theorem 1.14 \cite{MP}, with the addition of keeping track of the layerwise computability of $h_0$. We recall the proof for completeness.

 We first look at increments over a class of intervals, which is chosen to be sparse, but big enough to approximate arbitrary intervals. More precisely, given $n, m \in \N$, we let $\Lambda_n(m)$ be the collection of all intervals of the form
$$
[(k-1+b)2^{-n+a}, \,\,\, (k+b)2^{-n+a}],
$$
for $k \in \{1, ... 2^n\}, \,\, a, \, b \in \{0, \frac{1}{m}, ... , \frac{m-1}{m}\}$. We further define $\Lambda (m) \coloneqq \bigcup_n \Lambda_n(m)$. 

\begin{lemma}\label{lemma:ModOfContLemma1}
For any fixed $m$ and $c > \sqrt{2}$, for $B(t)$ a \ML random Brownian motion, there exists $n_0 \in \N$, layerwise computable in $B(t)$, such that for any $n \geq n_0$, 
$$
|B(t) - B(s)| \leq c \sqrt{(t-s)\log\frac{1}{(t-s)}} \,\,\,\,\,\,\,\, \text{for all } [s, t] \in \Lambda_m(n).
$$
\end{lemma}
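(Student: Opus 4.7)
The plan is to construct a Martin-L\"of test whose levels quantify the ``bad'' events that some interval in $\Lambda_n(m)$ fails the stated bound, and then to read off a layerwise computable $n_0$ directly from the layer index.

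For each $n \in \N$, let $E_n \subseteq C[0,1]$ be the set of paths $B$ for which there exists $[s,t] \in \Lambda_n(m)$ with
\[
|B(t)-B(s)| > c\sqrt{(t-s)\log(1/(t-s))}.
\]
Since $\Lambda_n(m)$ is a finite, uniformly computable collection of rational intervals and the evaluation maps $B \mapsto B(s)$ are computable on $(C[0,1], \|\cdot\|_\infty)$ for rational $s$, each $E_n$ is uniformly a c.e.\ open subset of $C[0,1]$.

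Next, I would estimate $\Pr(E_n)$ by a union bound. Each interval $[s,t] \in \Lambda_n(m)$ has length $h = 2^{-n+a}$ for some $a \in \{0, 1/m, \ldots, (m-1)/m\}$, so $h \in [2^{-n}, 2^{-n+1})$ and $B(t)-B(s) \sim \mathcal{N}(0,h)$. The standard Gaussian tail estimate gives
\[
\Pr\!\left(|B(t)-B(s)| > c\sqrt{h\log(1/h)}\right) \leq 2 e^{-c^2 \log(1/h)/2} = 2 h^{c^2/2} \leq 2^{1+c^2/2} \cdot 2^{-n c^2/2},
\]
and since $|\Lambda_n(m)| \leq m^2 \cdot 2^n$, a union bound gives $\Pr(E_n) \leq 2^{1+c^2/2} m^2 \cdot 2^{-n(c^2/2 - 1)}$. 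The hypothesis $c > \sqrt{2}$ makes $c^2/2 - 1$ strictly positive, so $\sum_n \Pr(E_n)$ converges effectively.

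Given this, I would choose a computable $g : \N \to \N$ with $\sum_{n \geq g(k)} \Pr(E_n) < 2^{-k}$ and set $\mathcal{U}_k = \bigcup_{n \geq g(k)} E_n$, producing a Martin-L\"of test $\{\mathcal{U}_k\}$. If $B$ is ML random and lies in $\mathcal{K}_k$, then $B \notin E_n$ for every $n \geq g(k)$, which is exactly to say that the bound $|B(t)-B(s)| \leq c\sqrt{(t-s)\log(1/(t-s))}$ holds for every $[s,t] \in \Lambda_n(m)$ and every $n \geq g(k)$. Thus the total computable map $(B,k) \mapsto g(k)$ produces a valid $n_0$ whenever $B \in \mathcal{K}_k$, witnessing the layerwise computability of $n_0$ in $B$. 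The only substantive point is the union-bound arithmetic -- the $2^n$ growth in $|\Lambda_n(m)|$ must be beaten by the Gaussian tail factor $h^{c^2/2}$, which is exactly where the strict inequality $c > \sqrt{2}$ is used; the effective openness of $E_n$ and the conversion of an effectively summable series of measures into a Martin-L\"of test are standard.
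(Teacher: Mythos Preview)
Your proposal is correct and follows essentially the same route as the paper: bound the probability of the bad event $E_n$ via a Gaussian tail estimate and a union bound over the $m^2 2^n$ intervals, observe that the resulting bound $O(2^{-n(c^2/2-1)})$ is summable precisely because $c>\sqrt{2}$, and then read off a layerwise computable $n_0$ from the index of the level passed. The paper phrases the last step as ``Solovay test $\to$ Martin-L\"of test'' while you build the Martin-L\"of test directly, but this is the same argument.

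One small correction: the endpoints of intervals in $\Lambda_n(m)$ are of the form $(k-1+b)2^{-n+a}$ with $a,b\in\{0,1/m,\ldots,(m-1)/m\}$, so they involve factors like $2^{j/m}$ and are not rational in general. They are, however, uniformly computable reals, and since point evaluation $B\mapsto B(s)$ is computable for any computable $s$, your claim that each $E_n$ is uniformly c.e.\ open still goes through.
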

\begin{proof}
From the tail estimate for a standard normal variable $X$, see, for example \cite{MP} Lemma 12.9, we obtain

\begin{multline*}
\Pr\bigg\{ \sup_{k \in \{1, ... , 2^n\}} \sup_{a, b \in \{0, \frac{1}{m}, ... ,\frac{m-1}{m}\}} \\
 |B((k-1+b)2^{-n +a}) - B((k + b)2^{-n+a})| > c\sqrt{2^{-n+a}\log(2^{n+a})}\bigg\} 
\end{multline*}

$$
 \leq 2^nm^2\Pr\{X>c\sqrt{\log(2^n)}\}
 $$
 \begin{equation}\label{eqn:Bob}
 \leq \frac{m^2}{c\sqrt{\log(2^n)}}\frac{1}{\sqrt{2\pi}} 2^{n(1-\frac{c^2}{2})}.
\end{equation}

Note that $c$ can be taken to be computable, so for fixed $m, n \in \N$ the event 

\begin{multline*}
\sup_{k \in \{1, ... , 2^n\}} \sup_{a, b \in \{0, \frac{1}{m}, ... ,\frac{m-1}{m}\}} \\
|B((k-1+b)2^{-n +a}) - B((k + b)2^{-n+a})| > c\sqrt{2^{-n+a}\log(2^{n+a})}
\end{multline*}

is computable in $B(t)$ and the right hand side of \ref{eqn:Bob} is summable, giving a Solovay test which every \ML random Brownian motion $B(t)$ will pass. 

The standard proof of the equivalence of Solovay randomness and \ML randomness gives a uniform way of converting a Solovay test $\{\mathcal{S}_i\}$ to a \ML test $\{\mathcal{U}_j\}$. See, for example, \cite{DH}. Thus knowing a~$k$ such that a \ML random path $B(t) \not \in \mathcal{U}_k$ gives us an $n_0$ where the path no longer appears in any $\mathcal{S}_n$ for $n > n_0$.  Thus the $n_0$ given in the proof above is layerwise computable in~$B$. 

\end{proof}

\begin{lemma}\label{lemma:ModOfContLemma2}
Given $\epsilon > 0$ there exists $m \in \N$ such that for every interval $[s, t] \subset [0, 1]$ there exists an interval $[s', t'] \in \Lambda(m)$ with $|t - t'| < \epsilon (t - s)$ and $|s - s'| < \epsilon (t - s)$. 
\end{lemma}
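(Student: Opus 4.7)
\emph{Plan.} The idea is that $\Lambda(m)$ becomes a fine enough ``grid of subintervals'' of $[0,1]$ as $m$ grows: both the set of admissible lengths and, for each fixed length, the set of admissible left endpoints form progressions whose mesh tends to $0$. I would choose $m$ large enough that $2^{1/m} - 1 < \varepsilon/2$ and $1/m < \varepsilon$; both conditions hold once $m$ is sufficiently large. The proof then splits into two steps: pick an acceptable length $h'$, then align the position.

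Given any $[s,t] \subset [0,1]$ with $h = t - s > 0$, the first step is to choose the length. The set of lengths appearing in $\Lambda(m)$ is $\{2^{-n+a} : n \geq 1,\ a \in \{0, 1/m, \ldots, (m-1)/m\}\}$, which reorganizes as $\{2^{k/m} : k \in \Z,\ k \leq -1\}$, a geometric progression with ratio $2^{1/m}$. I take $h' = 2^{-n+a}$ to be the largest such length with $h' \leq h$ (or $h' = 2^{-1/m}$ in the boundary case $h \in (2^{-1/m}, 1]$). In either case a direct computation gives $(h - h')/h \leq 2^{1/m} - 1 < \varepsilon/2$.

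The second step is to pick the starting point. With $n$ and $a$ fixed so that $h' = 2^{-n+a}$, as $k$ ranges over $\{1,\ldots,2^n\}$ and $b$ over $\{0, 1/m, \ldots, (m-1)/m\}$, the left endpoints $(k-1+b)h'$ form an arithmetic progression from $0$ to $2^a - h'/m$ with step $h'/m$. Because $s \leq 1 - h \leq 1 - h'$ and $2^a - h'/m \geq 1 - h'/m > 1 - h'$ (using $m \geq 2$, $a \geq 0$), the point $s$ lies within this progression's range, and I can choose the nearest progression point $s'$, yielding $|s - s'| \leq h'/(2m) < (\varepsilon/2)\, h$. Setting $t' = s' + h'$ gives $[s', t'] \in \Lambda_n(m) \subseteq \Lambda(m)$, with $|s - s'| < \varepsilon h$ and, by the triangle inequality, $|t - t'| \leq |s - s'| + |h - h'| < \varepsilon h$.

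The only subtle point I anticipate is the boundary regime $h \in (2^{-1/m}, 1]$: all admissible lengths lie at most $2^{-1/m} < 1$, so the absolute error $h - h'$ can be as large as $1 - 2^{-1/m}$. The \emph{relative} error $(h-h')/h$ is still controlled by $2^{1/m} - 1$ because in this regime $h$ itself is close to $1$. This one case distinction is really the only non-routine piece of the argument; the remainder is a mesh-counting computation.
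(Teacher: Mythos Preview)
Your argument is correct. The paper does not give its own proof of this lemma but simply cites \cite{MP}, Lemma~1.17; your write-up supplies exactly the kind of direct elementary argument one would expect there---first matching the length $h'$ from the geometric grid $\{2^{k/m}:k\le -1\}$ so that $(h-h')/h<2^{1/m}-1<\varepsilon/2$, then matching the left endpoint from the arithmetic progression of step $h'/m$---and the boundary case $h>2^{-1/m}$ is handled cleanly via $1-2^{-1/m}<2^{1/m}-1$. The range check ensuring $s\le 2^a-h'/m$ (so that the nearest-point bound $|s-s'|\le h'/(2m)$ is legitimate) is the only place one might slip, and you have verified it.
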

\begin{proof}
See \cite{MP}, Lemma 1.17

\end{proof}

\begin{proof}[Proof of Proposition \ref{prop:modulus-of-continuity-2}]

Given $c > \sqrt{2}$, pick $0<\epsilon < 1$ small enough to ensure that $c^*\coloneqq c - \epsilon > \sqrt{2}$ and $m \in \N$ as in Lemma \ref{lemma:ModOfContLemma2}. Using Lemma~\ref{lemma:ModOfContLemma1} we choose $n_0 \in \N$ large enough that, for all $n \geq n_0$ and all intervals $[s', t'] \in \Lambda_n(m)$, almost surely,
$$
|B(t') - B(s')| \leq c^* \sqrt{(t' - s') \log\frac{1}{(t' - s')}}. 
$$
Now let $[s, t] \subset [0, 1]$ be arbitrary, with $t - s < \min(2^{-n_0}, \epsilon$), and pick $[s', t'] \in \Lambda(m)$ with $|t - t'| < \epsilon (t - s)$ and $|s - s'| < \epsilon (t - s)$. Then, recalling \ref{eqn:ModOfCont}, there is a $C$ such that 
\begin{gather*}
|B(t) - B(s)| \leq |B(t) - B(t')| + |B(t') - B(s')| + |B(s') - B(s)|\\
\leq C \sqrt{|t - t'| \log\frac{1}{|t - t'|}} + c^* \sqrt{(t' - s')\log\frac{1}{(t' - s')}} + C\sqrt{|s - s'| \log\frac{1}{|s - s'|}}\\
\leq (4C\sqrt{\epsilon} + c^*\sqrt{(1+2\epsilon)(1 - \log(1 - 2\epsilon))})\sqrt{(t - s)\log\frac{1}{t - s}}.
\end{gather*}
By making $\epsilon > 0$ small, the first factor on the right can be chosen arbitrarily close to~$c$. This completes the proof of the theorem. 

\end{proof}

\subsection{Computability of minimum and maximum}

Since a sample path~$B$ is almost surely continuous, it almost surely reaches a maximum and a minimum on any given interval. As it turns out, these extremal values can be computed layerwise in~$B$. 

\begin{proposition}\label{proposition:max-layerwise-computable}
The function
\[
\max(B,x,y) = \max \{B(t) \mid t \in [x,y]\}
\]
is computable uniformly in~$x,y$ and layerwise in~$B$. The same is true for the minimum function. 

\end{proposition}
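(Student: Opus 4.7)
The plan is to combine the uniform modulus of continuity from Proposition~\ref{prop:modulus-of-continuity-2} with the fact that pointwise evaluation $B \mapsto B(t)$ is computable uniformly in $B \in (C[0,1], \|\cdot\|_\infty)$ and in a rational $t$. The idea is to reduce the maximum over the continuum $[x,y]$ to the maximum over a sufficiently fine rational grid, where the required grid spacing is dictated by the layerwise computable modulus of continuity.

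More concretely, fix any computable $c > \sqrt{2}$, say $c = 2$. Given $B$ in the layer $\mathcal{K}_n$, Proposition~\ref{prop:modulus-of-continuity-2} provides an $h_0 > 0$, layerwise computable in $B$, such that $|B(s) - B(t)| < c\sqrt{|s-t|\log(1/|s-t|)}$ whenever $|s-t| < h_0$. Given a target precision $\epsilon > 0$, compute a rational $h$ with $0 < h < \min(h_0, 1/e)$ small enough that $c\sqrt{h\log(1/h)} < \epsilon/2$. Choose rationals $x', y'$ with $x \leq x' \leq y' \leq y$ and $x' - x, y - y' < h$, together with a rational grid $x' = t_0 < t_1 < \cdots < t_N = y'$ of spacing less than $h$. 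For each $i$, compute a rational $B_i$ with $|B_i - B(t_i)| < \epsilon/2$ (uniformly computable from $B$ and $t_i$), and output $\tilde{M} = \max_i B_i$.

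Since $u \mapsto \sqrt{u\log(1/u)}$ is increasing on $(0, 1/e)$, every $t \in [x,y]$ lies within distance $h$ of some $t_i$, hence $|B(t) - B(t_i)| < c\sqrt{h\log(1/h)} < \epsilon/2$. Writing $M = \max_{t \in [x,y]} B(t)$, this gives $|M - \max_i B(t_i)| < \epsilon/2$, and adding the pointwise approximation error yields $|M - \tilde{M}| < \epsilon$. Since $\epsilon$ was arbitrary, we have computed $\max(B,x,y)$ layerwise in $B$ and uniformly in $x,y$. The minimum is handled identically, or by applying the argument to $-B$, which is itself \ML random because $B \mapsto -B$ is a computable measure-preserving automorphism of Wiener space.

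The only subtle point is obtaining $h_0$ effectively from a layer index for $B$, and this is precisely what Proposition~\ref{prop:modulus-of-continuity-2} was set up to deliver; the rest of the argument is a routine finite-grid computation together with the standard uniformly computable evaluation map on $C[0,1]$.
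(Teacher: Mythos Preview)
Your proposal is correct and follows essentially the same approach as the paper: use the layerwise computable modulus of continuity from Proposition~\ref{prop:modulus-of-continuity-2} with $c=2$ to reduce the supremum over $[x,y]$ to a maximum over a finite rational grid of spacing controlled by $h_0$. Your write-up is in fact more careful than the paper's (you keep track of the rational endpoint approximations, the monotonicity on $(0,1/e)$, and the separate $\epsilon/2$ contributions from grid error and pointwise evaluation error), but the underlying argument is identical.
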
 
\begin{proof}
To compute the maximum of $B(t)$ on $[x, y]$ to within $\epsilon$, we run the following simple algorithm: Pick $h_0$ small enough so that $B(t)$ obeys a modulus of continuity with constant $c = 2$ (see Proposition~\ref{prop:modulus-of-continuity-2}) and so that $2\sqrt{h_0\log(1/h_0)} < \epsilon$. Then we know that the maximum of the values $B(r_1), B(r_1 + h_0), B(r_1 + 2h_0), ... , B(r_2)$ must be within $2\sqrt{h_0\log(1/h_0)} $, and therefore within $\epsilon$, of the maximum value of $B(t)$ on $[x,y]$. The minima are also layerwise computable by the same argument.

Note that this argument does not establish the layerwise computability of the time(s) at which the maximum occurs; the best we can say using this argument is that the time(s) are $\Pi_1^0$ in $B$, and the argument uses the randomness deficiency of $B$ and so is not uniform. 

\end{proof}

\begin{proposition}\label{prop:max-not-computable}
Local maxima and local minima of a \ML random Brownian motion are Martin-L\"of random reals (in particular, they cannot be computable reals).
\end{proposition}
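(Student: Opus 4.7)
The plan is to realize any local maximum value of $B$ as $\max_{t \in [a,b]} B(t)$ for a rational subinterval $[a,b] \subset (0,1)$, and then apply the randomness preservation machinery of Theorem~\ref{thm:layerwise-computability}. Concretely, if $M = B(t^*)$ is a local maximum attained at some interior point $t^* \in (0,1)$, then continuity of $B$ lets us pick rationals $0 < a < t^* < b < 1$ so close to $t^*$ that $B(s) \leq M$ for every $s \in [a,b]$; for such $a,b$ we have $M = F_{a,b}(B)$, where $F_{a,b}(B) := \max_{t \in [a,b]} B(t)$.

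By Proposition~\ref{proposition:max-layerwise-computable}, the map $F_{a,b} : C[0,1] \to \R$ is layerwise computable for each rational pair $0 < a < b < 1$. Theorem~\ref{thm:layerwise-computability}(i)--(ii) then yields that the pushforward measure $\nu_{a,b} := (F_{a,b})_* \Pr$ is a computable probability measure on $\R$, and that $M = F_{a,b}(B)$ is \ML random with respect to $\nu_{a,b}$.

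The next step is to identify $\nu_{a,b}$ concretely. By the Markov property combined with the reflection principle, conditional on $B(a) = x$ the variable $\max_{t \in [a,b]} B(t)$ has the distribution of $x + |Z|$ with $Z \sim \mathcal{N}(0, b-a)$. Integrating out $B(a) \sim \mathcal{N}(0,a)$, the measure $\nu_{a,b}$ is the convolution of $\mathcal{N}(0,a)$ with $|\mathcal{N}(0, b-a)|$. Since $a > 0$, the resulting density with respect to Lebesgue measure on $\R$ is a computable, everywhere strictly positive function.

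Finally I would upgrade $\nu_{a,b}$-randomness of $M$ to ordinary \ML randomness of $M$ as a real. Because the density is computable and positive, it is bounded above and below by effective positive constants on every rational neighborhood of $M$, and a standard pullback argument, in which \ML tests for Lebesgue measure become \ML tests for $\nu_{a,b}$ after a computable shift in level and vice versa, shows that Lebesgue measure and $\nu_{a,b}$ have the same \ML random reals locally near $M$. Hence $M$ is \ML random in the usual sense. Local minima are handled by applying the whole argument to $-B$, which is itself an \ML random path since $B \mapsto -B$ is computable and measure-preserving. The most delicate step is this last equivalence of randomness notions; it is standard effective measure theory but requires care because Lebesgue measure on $\R$ is not a probability measure, so one must work locally on rational intervals containing $M$.
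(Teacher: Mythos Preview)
Your proposal is correct and follows essentially the same route as the paper: realize the local extremum as $\max_{[a,b]} B$ for rationals $a<b$, use layerwise computability of this functional together with randomness preservation to get $\nu_{a,b}$-randomness, identify $\nu_{a,b}$ via the Markov property and reflection principle, and then invoke the standard fact that a computable measure with continuous positive density has the same \ML random points as Lebesgue measure. Your convolution description of $\nu_{a,b}$ is in fact more accurate than the density formula written in the paper (which, as stated, would vanish for negative arguments, whereas the true distribution of $\max_{[a,b]}\B$ with $a>0$ has full support on~$\R$).
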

\begin{proof}

Fix two rational numbers $x<y$. It is known classically that $\max(\B,0,y)$ is distributed according to the density function
\[
f(a) = 2\cdot \frac{e^{-a^2/(2y)}}{\sqrt{2\pi y}} 
\]
for $a \geq 0$, and $f(a) = 0$ for $a<0$ (see~\cite[Theorem 2.21]{MP}). By the Markov property, $\max(\B,x,y)$ has the same distribution as $\B(x)+\max(\B,0,y-x)$, and thus is distributed according to the density function 
\[
g(a) =   \frac{e^{-a^2/(2x)}}{\sqrt{2\pi x}} + 2 \frac{e^{-a^2/(2(y-x))}}{\sqrt{2\pi (y-x)}} 
\]
for $a \geq 0$, and $f(a) = 0$ for $a<0$. It is known that if a computable measure~$\mu$ on~$\R$ admits a continuous positive density function, then its random elements are exactly the Martin-L\"of random reals (see~\cite{HRAppofEffProb}). Since the function
\[
B \mapsto \max(B,x,y)
\]
is layerwise computable, its image measure is computable, and by the above has a continuous positive density function. Moreover, by the randomness preservation theorem since the function
\[
B \mapsto \max(B,x,y)
\]
is layerwise computable, the image of an ML random~$B$ is random for the image measure, hence is Martin-L\"of random for the uniform measure. 

\end{proof}

\begin{corollary}\label{cor:zero-eq-crossing}
If a \ML random~$B$ has a zero on some interval~$[a,b]$, there are $x,y \in [a,b]$ such that $f(x)>0$ and $f(y)<0$.
\end{corollary}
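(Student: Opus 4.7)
The plan is to show that the mere presence of a zero in $[a,b]$ forces the maximum to be strictly positive and the minimum to be strictly negative, by combining Proposition~\ref{proposition:max-layerwise-computable} with Proposition~\ref{prop:max-not-computable}: since the max and min on a rational interval are Martin-L\"of random reals, they cannot equal the computable number $0$.

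First I would reduce to the case where $a$ and $b$ are rational. Let $t_0 \in [a,b]$ be a zero of $B$, i.e.\ $B(t_0)=0$. If $t_0$ lies strictly inside $(a,b)$, pick rationals $a',b'$ with $a \le a' < t_0 < b' \le b$ and work on $[a',b']$ instead; any $x,y \in [a',b']$ satisfying the conclusion also lie in $[a,b]$.

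With $a,b$ rational, set
\[
M = \max(B,a,b), \qquad m = \min(B,a,b).
\]
Since $B(t_0) = 0$ and $t_0 \in [a,b]$, we have $m \le 0 \le M$. By Proposition~\ref{prop:max-not-computable}, $M$ and $m$ are each Martin-L\"of random reals, so in particular neither equals the computable real $0$. Hence $M > 0$ and $m < 0$, and by continuity of $B$ these extrema are attained at points $x,y \in [a,b]$ with $B(x) = M > 0$ and $B(y) = m < 0$.

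The main subtlety I anticipate is the edge case where the zero $t_0$ coincides with an irrational endpoint of $[a,b]$, since then no rational subinterval of $[a,b]$ contains $t_0$ in its interior. This is a boundary technicality rather than a conceptual obstacle; it can be bypassed by observing that an ML-random path cannot have an isolated zero at a point one can pin down in this way, so a nearby zero lies in some rational subinterval to which the argument above applies.
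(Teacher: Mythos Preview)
Your argument is correct and is essentially the paper's one-line proof (``Otherwise~$0$ would be a local maximum or minimum, which would contradict Proposition~\ref{prop:max-not-computable}'') written out in full, via the same reduction to a rational subinterval and the same appeal to the Martin-L\"of randomness of $\max(B,a',b')$ and $\min(B,a',b')$. The endpoint subtlety you flag is real and the paper does not address it either; in every application the corollary is invoked with the zero lying in the interior of a rational interval, so the issue never bites.
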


\begin{proof}
Otherwise~$0$ would be a local maximum or minimum, which would contradict Proposition~\ref{prop:max-not-computable}.
\end{proof}


\section{Zero sets of \ML random Brownian motion}

In this section, we study the properties of the zero set 
\[
Z_B = \{t\geq 0: B(t) = 0\} 
\]
of \ML random paths. Once again, we will need some classical results to prove our effective theorems. Most importantly, we will need the next proposition, which gives an exact expression of the probability that a path has a zero in a given interval. 

\begin{proposition}[see \cite{PeresInvitation}]\label{prop:hitting-prob-1}
For any $\epsilon \in (0, 1)$ and $a>0$ 
$$
\Pr_0 \Big(B(s) = 0 \text{ for some }s \in [a, a + \epsilon]\Big) = \frac{2}{\pi} \arctan\left( \sqrt{\frac{\epsilon}{a}} \right)
$$
which is $\sim$ $\frac{2}{\pi} \sqrt{\frac{\epsilon}{a}}$ as $\epsilon$ tends to $0$. 
\end{proposition}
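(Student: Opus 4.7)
The plan is to condition on the value of $B(a)$ and reduce the hitting probability in $[a,a+\epsilon]$ to a one-sided passage-time problem over an interval of length $\epsilon$. By the (ordinary) Markov property,
\[
\Pr_0\bigl(B(s)=0\text{ for some }s\in [a,a+\epsilon]\bigr) \;=\; \int_{-\infty}^\infty \Pr_x\bigl(\tau_0 \leq \epsilon\bigr)\,\frac{e^{-x^2/(2a)}}{\sqrt{2\pi a}}\,dx,
\]
where $\tau_0 = \inf\{s: B(s)=0\}$. So the first step is simply to write down this integral.

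Step two is to compute $\Pr_x(\tau_0 \leq \epsilon)$ using the reflection principle. By the symmetry of Brownian motion, this quantity depends only on $|x|$, and for $x>0$ reflection gives $\Pr_x(\tau_0\leq \epsilon) = \Pr_0(\tau_x \leq \epsilon) = 2\Pr_0(B(\epsilon)\geq x)$. Substituting this into the integral from step one and using evenness in $x$, the probability becomes
\[
\frac{2}{\pi\sqrt{a\epsilon}} \int_0^\infty \int_x^\infty e^{-x^2/(2a)-y^2/(2\epsilon)}\,dy\,dx.
\]

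Step three is to evaluate this double Gaussian integral. The natural move is to rescale $u = x/\sqrt{a}$ and $v = y/\sqrt{\epsilon}$; the Jacobian cancels the $\sqrt{a\epsilon}$ prefactor exactly, and the region transforms into $\{u\geq 0,\; v \geq u\sqrt{a/\epsilon}\}$. Passing to polar coordinates $(r,\theta)$, the radial integral $\int_0^\infty r e^{-r^2/2}\,dr$ equals $1$, and the angular range is precisely $[\arctan\sqrt{a/\epsilon},\,\pi/2]$. Hence the whole expression equals
\[
\tfrac{2}{\pi}\bigl(\tfrac{\pi}{2} - \arctan\sqrt{a/\epsilon}\bigr) \;=\; \tfrac{2}{\pi}\arctan\sqrt{\epsilon/a},
\]
via the identity $\arctan t + \arctan(1/t) = \pi/2$ for $t>0$. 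The claimed asymptotic then follows immediately from the Taylor expansion $\arctan t = t + O(t^3)$ as $t \to 0$.

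The only delicate point is the reflection-principle step: one must invoke the symmetry of Brownian motion so that $\Pr_x(\tau_0\leq \epsilon)$ depends only on $|x|$ and remember the factor of $2$. Once that is set up correctly, the rest is a routine polar-coordinate computation, which is precisely why the answer lands on an arctangent.
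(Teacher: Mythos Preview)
Your argument is correct: the Markov property at time~$a$, the reflection principle for the one-sided hitting time, the rescaling $u=x/\sqrt{a}$, $v=y/\sqrt{\epsilon}$, and the polar-coordinate evaluation all check out, and the arctangent identity gives the stated formula; the asymptotic is immediate from $\arctan t \sim t$.

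As for comparison with the paper: note that the paper does not actually prove this proposition. It is quoted as a classical fact with a reference to Peres's lecture notes (``see~\cite{PeresInvitation}''), and is used as a black box throughout the rest of the article. Your write-up is essentially the standard derivation one finds in that reference, so there is nothing to contrast methodologically.
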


We shall also need the following lemma. 

\begin{lemma}\label{lem:bigger-x-smaller-prob}
Let $[a,b]$ be a sub-interval of $[0,\infty)$. Then for all~$x$
\[
\Pr_0 \big(\B \text{ has a zero in } [a,b] \big) \geq \Pr_x \big( \B \text{ has a zero in } [a,b] \big)
\]
\end{lemma}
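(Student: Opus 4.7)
The approach is to condition on $\B(a)$ and reduce the inequality to a one-variable convolution statement. By the symmetry $\B \leftrightarrow -\B$ we may assume $x \geq 0$, and the case $a = 0$ is trivial (the left side equals $1$), so assume $a > 0$. Writing $\phi_a$ for the density of $\mathcal{N}(0,a)$, the Markov property at time $a$ gives
\[
\Pr_x\bigl(\B \text{ has a zero in } [a,b]\bigr) = \int_{\R} \phi_a(y - x)\, f(y)\, dy,
\]
where $f(y) := \Pr_y\bigl(\B \text{ has a zero in } [0, b-a]\bigr)$. By the reflection principle, $f(y) = 2\int_{|y|}^\infty \phi_{b-a}(u)\, du$; in particular $f$ is even and strictly decreasing in $|y|$.

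Splitting the integral over $\{y > 0\}\cup\{y<0\}$ and applying $y \mapsto -y$ in the negative half, the evenness of $\phi_a$ and $f$ collapses the difference of the two sides into
\[
\Pr_0(\cdots) - \Pr_x(\cdots) = \int_0^\infty h(y)\, f(y)\, dy, \qquad h(y) := 2\phi_a(y) - \phi_a(y - x) - \phi_a(y + x).
\]
The key algebraic identity $\phi_a(y \pm x) = \phi_a(y)\, e^{\mp xy/a}\, e^{-x^2/(2a)}$ yields
\[
h(y) = 2\phi_a(y)\bigl[1 - e^{-x^2/(2a)} \cosh(xy/a)\bigr].
\]
Since $\cosh(xy/a)$ is strictly increasing on $y \geq 0$, the bracket has a unique zero $y^* > 0$, is positive on $[0, y^*)$ and negative on $(y^*, \infty)$. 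A routine computation using the substitutions $u = y \pm x$ together with the symmetry of $\phi_a$ gives $\int_0^\infty h(y)\, dy = 0$.

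A single sign-change of $h$ at $y^*$ with vanishing integral over $(0,\infty)$, combined with the monotonicity of $f$, is exactly the setting for a Chebyshev-style rearrangement step: one writes
\[
\int_0^\infty h(y) f(y)\, dy = \int_0^\infty h(y)\bigl[f(y) - f(y^*)\bigr]\, dy,
\]
and observes that $h(y)$ and $f(y) - f(y^*)$ have matching signs (both non-negative on $[0, y^*)$, both non-positive on $(y^*, \infty)$), so the integrand is pointwise non-negative. This establishes the inequality. The main obstacle is pinning down the single sign-change and vanishing integral of $h$ via the Gaussian/\textit{cosh} identity; once this is in hand, the monotonicity of $f$ (which follows immediately from the reflection principle) and the rearrangement step finish things off.
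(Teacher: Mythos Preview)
Your proof is correct, but it takes a quite different route from the paper's. The paper uses a one-line reflection coupling: starting from a standard Brownian motion $\B$ (with $\B(0)=0$), it defines $\B'(t)=x-\B(t)$ up to the first time $\tau$ at which $\B(\tau)=x/2$, and $\B'(t)=\B(t)$ thereafter. Then $\B'$ is a Brownian motion started at $x$, and whenever $\B'$ hits $0$ in $[a,b]$ one necessarily has $\tau<t$ and hence $\B(t)=\B'(t)=0$ as well. This gives a pathwise inclusion of events, from which the inequality is immediate.

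Your argument instead conditions on the value at time $a$, rewrites the difference of the two probabilities as $\int_0^\infty h(y)f(y)\,dy$ with $f$ even and decreasing, verifies via the Gaussian identity that $h$ has a single sign change and integrates to zero on $(0,\infty)$, and then applies a Chebyshev-type rearrangement. All steps check out. The trade-off: the coupling proof is shorter, yields the stronger pathwise statement, and would carry over verbatim to any symmetric process with continuous paths; your analytic proof is longer and leans on the explicit Gaussian form of the transition density, but it is entirely self-contained and avoids having to invent the coupling.
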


\begin{proof}
Consider the random variable~$\B$ consisting of a Brownian motion starting at~$0$, and form the variable $\B'$ defined as follows: $\B'(t)=x-\B(t)$ for $t \leq \tau$ and $\B'(t)=\B(t)$ for $t \geq \tau$, where~$\tau$ is the first time~$s$ at which $\B(s)=x-\B(s)$. Then the distribution of $\B'$ is that of a Brownian motion starting at~$x$. Moreover, if $\B'(t)=0$ for some $t \in [a,b]$, then by continuity we have $\tau < t$, and thus $\B(t)=\B'(t)=0$. This shows that 
\[
\Pr (\B' \text{ has a zero in } [a,b] \big) \leq \Pr \big( \B \text{ has a zero in } [a,b] \big)
\]
and the result follows. 

\end{proof}

\subsection{The zero set of~$B$ is layerwise recursive in~$B$}

Following~\cite[Definition 5.1.1]{Weihrauch2000}, we say that a closed set~$\mathcal{C}$ is \emph{recursive} if the predicate
\[
\mathcal{C} \cap (a,b) = \emptyset
\]
over a pair $(a,b)$ of rationals, is decidable. 

\begin{remark}\label{rem:minimum}
Note that a recursive closed set is in particular a $\Pi^0_1$ class. Not all $\Pi^0_1$ classes  are recursive. For example, the minimum element of a bounded recursive closed set is necessarily a computable real, a property that not all bounded $\Pi^0_1$ subsets of~$\R$ have. To see this, suppose without loss of generality that all members of $\mathcal{C}$ are positive. Then the minimum is lower semicomputable as 
\[
\min(\mathcal{C}) = \sup \{q \in \Q \mid (0,q) \cap \mathcal{C}=\emptyset\}
\]
and upper semicomputable as
\[
\min(\mathcal{C}) = \inf \{q \in \Q  \mid \exists q' \in \Q ~(q',q) \cap \mathcal{C} \not=\emptyset\}
\]
\end{remark}

The main result of this subsection is that the zero set $Z_B$ is recursive layerwise in~$B$. To prove this fact, we first need to show the following proposition.

\begin{proposition}\label{prop:origin-not-isolated}
For $B$ \ML random, the origin is not an isolated zero.  
\end{proposition}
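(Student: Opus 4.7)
The plan is to exhibit a Solovay test $\{E_n\}$ of effectively open subsets of $C[0,1]$ whose measures are summable and such that any path $B$ for which $0$ is an isolated zero must lie in cofinitely many $E_n$. Since every \ML random path is in only finitely many levels of any Solovay test, this yields the conclusion.

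The natural candidate comes from Proposition~\ref{prop:hitting-prob-1}, which gives an exact, computable formula for the probability that $B$ has a zero on a sub-interval $[a,a+\epsilon]$ of $(0,\infty)$ with $a>0$. I would pick a rapidly shrinking sequence of intervals, for example $[a_n,b_n]=[2^{-2n},2^{-n}]$, and set
\[
E_n \;=\; \{B \in C[0,1] \mid B(t)\neq 0 \text{ for all } t \in [a_n,b_n]\}.
\]
Using the identity $1-\tfrac{2}{\pi}\arctan(x)=\tfrac{2}{\pi}\arctan(1/x)$ together with $\arctan(y)\le y$, the measures $\Pr(E_n)$ decay like $2^{-n/2}$, so they are both computable uniformly in $n$ and summable. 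The remaining verification is that $E_n$ is effectively open uniformly in $n$: its complement, the set of paths with a zero somewhere in the compact interval $[a_n,b_n]$, is closed in the sup norm by uniform convergence and compactness of $[a_n,b_n]$, and $E_n$ itself admits an effective enumeration by basic balls centered at rational piecewise-linear functions $p$ whose absolute value stays bounded away from $0$ on $[a_n,b_n]$ by strictly more than the ball radius (a decidable condition on $p$).

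With $\{E_n\}$ established as a Solovay test, the conclusion is immediate: if $B$ were \ML random with $0$ isolated in $Z_B$, then there would exist $\delta>0$ with $B(t)\neq 0$ on $(0,\delta]$, hence $B \in E_n$ for every $n$ with $2^{-n}<\delta$, contradicting the Solovay test. The main technical obstacle is precisely the effective openness of $E_n$: one must be careful because $\min_{[a_n,b_n]}|B|$ is only upper semi-computable uniformly in an arbitrary $B \in C[0,1]$, so one cannot naively ``compute the minimum and check positivity''. This is resolved cleanly by working at the level of the basic topology of $C[0,1]$ rather than trying to compute $\min |B|$ directly.
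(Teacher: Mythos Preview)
Your proof is correct and follows essentially the same approach as the paper: exhibit the events ``no zero on a shrinking interval near~$0$'' as a test, use Proposition~\ref{prop:hitting-prob-1} to bound their measures, and conclude. The only differences are cosmetic---your intervals $[2^{-2n},2^{-n}]$ versus the paper's $(2^{-3k},2^{-3k}+2^{-k})$, and a Solovay rather than Martin-L\"of/Schnorr framing---and you are in fact more explicit than the paper in arguing that each $E_n$ is effectively open (one small quibble: $\min_{[a_n,b_n]}|B|$ is actually fully computable in $B$, not merely upper semi-computable, since the map is $1$-Lipschitz in the sup norm, but your basic-ball argument is fine regardless).
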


\begin{proof}

For all~$k$, we know from Proposition~\ref{prop:hitting-prob-1} that the probability for  Brownian motion not having a zero on the interval $(2^{-3k},2^{-3k}+2^{-k})$ is 
\[
1-\frac{2}{\pi} \arctan(2^k)
\]
which limits to zero, computably, as $n \to \infty$. Moreover, we argued above that not having a zero in a given rational interval is a $\Sigma^0_1$ event, thus this gives us a Martin-L\"of test (in fact, a Schnorr test), and thus a Martin-L\"of random~$B$ must have a zero in infinitely many intervals of type $(2^{-3k}, 2^{-3k}+2^{-k})$.

\end{proof}

\begin{proposition}\label{prop:no-computable-zero}
For $B$ Martin-L\"of random, the set $Z_B$ does not contain any computable real other than~$0$. 
\end{proposition}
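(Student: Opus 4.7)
The plan is to reduce the statement to the fact (already invoked in the proof of Proposition~\ref{prop:max-not-computable}) that if a computable probability measure on $\R$ admits a continuous positive density, then its Martin-L\"of random elements are exactly the Martin-L\"of random reals for the uniform measure; in particular no such element can be a computable real. So it suffices to show that for every fixed computable $t_0 > 0$, the real $B(t_0)$ is Martin-L\"of random, whence $B(t_0) \neq 0$.

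First I would observe that for a fixed computable $t_0$, the evaluation map
\[
\mathrm{ev}_{t_0} \colon B \mapsto B(t_0)
\]
from $(C[0,1], ||.||_\infty)$ (or $(C[0,\infty), ||.||_\infty)$ if $t_0>1$) to~$\R$ is computable: it is $1$-Lipschitz, so rational approximations to~$B$ in the sup norm yield rational approximations to $B(t_0)$. A fortiori it is layerwise computable.

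Next I would identify the pushforward measure. By the definition of Brownian motion, $\mathrm{ev}_{t_0}$ sends Wiener measure to the normal distribution $\mathcal{N}(0,t_0)$, which is a computable measure (its computability also follows abstractly from part~(i) of Theorem~\ref{thm:layerwise-computability}). This distribution has the continuous positive density
\[
\varphi(a) = \frac{1}{\sqrt{2\pi t_0}}\, e^{-a^2/(2t_0)},
\]
so its \ML random elements coincide with the \ML random reals for the uniform measure on~$\R$, as cited from~\cite{HRAppofEffProb} in the proof of Proposition~\ref{prop:max-not-computable}.

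Finally, by part~(ii) of Theorem~\ref{thm:layerwise-computability}, if~$B$ is \ML random then $B(t_0) = \mathrm{ev}_{t_0}(B)$ is \ML random for $\mathcal{N}(0,t_0)$, and hence \ML random for the uniform measure. Since no Martin-L\"of random real is computable, and in particular $B(t_0) \neq 0$, the value $t_0$ cannot lie in $Z_B$. There is no real obstacle here; the only subtle point is to notice that evaluation at a computable time really is (layerwise) computable, which is immediate from the choice of the sup norm on~$C[0,1]$.
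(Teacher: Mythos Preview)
Your proof is correct and takes a genuinely different route from the paper's. The paper builds an explicit Martin-L\"of test: for a computable $x>0$ it picks a computable sequence of rational intervals $[a_k,a_k+2^{-k}]$ containing~$x$, uses Proposition~\ref{prop:hitting-prob-1} to bound the probability of $\B$ having a zero there by $O(2^{-k/2})$, and invokes Proposition~\ref{prop:max-not-computable} (via Corollary~\ref{cor:zero-eq-crossing}) to make ``has a zero in $[a_k,a_k+2^{-k}]$'' a $\Sigma^0_1$ event; a \ML random $B$ must leave this test at some stage. You instead push Wiener measure forward through the evaluation map $B \mapsto B(t_0)$ and apply randomness preservation directly to~$\mathcal{N}(0,t_0)$. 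Your argument is shorter, avoids the hitting-probability estimate and the $\Sigma^0_1$ reformulation of ``has a zero'', and incidentally yields the stronger conclusion that $B(t_0)$ is a \ML random real (not merely nonzero). The paper's approach, by contrast, is the template for the later, deeper results (Theorems~\ref{thm:effdimhigh} and~\ref{thm:brownian-ample-excess}), where exactly this kind of test construction with the $O(2^{-k/2})$ bound is reused for non-computable points; that generalization is not available via the evaluation-map argument.
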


\begin{proof}
Suppose $x>0$ is computable. Let $[a_k,a_k+2^{-k}]$ be a computable sequence of rational intervals containing~$x$. The probability for~$\B$ to have a zero in $[a_k,a_k+2^{-k}]$ is $O(2^{-k/2})$ (the multiplicative constant depending on~$x$) and by Corollary~\ref{prop:max-not-computable}, having a zero in $[a_k,a_k+2^{-k}]$ for a \ML random Brownian motion is equivalent to having a positive and a negative value on $[a_k,a_k+2^{-k}]$, which is a $\Sigma^0_1$ property. Therefore, this induces a Martin-L\"of test, and thus any \ML random~$B$ must have no zero in $[a_k,a_k+2^{-k}]$ for some~$k$. 
\end{proof}

\begin{theorem}\label{thm:ZB-layerwise-recurive}
For $B$ a \ML random path, $Z_B$ is a non-empty closed set which is recursive layerwise in~$B$.
\end{theorem}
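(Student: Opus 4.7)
The plan is to verify non-emptiness, closedness, and layerwise recursiveness in turn. Non-emptiness is immediate since $B(0)=0$, and closedness follows because $Z_B = B^{-1}(\{0\})$ is the preimage of a closed set under the continuous function $B$. The substantive task is to produce, uniformly in a layer index of a universal Martin-L\"of test for $B$, a decision procedure that on input rationals $a < b$ (we may assume $a \geq 0$, since $Z_B \subseteq [0,\infty)$) decides whether $Z_B \cap (a,b) = \emptyset$.

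If $a = 0$, Proposition~\ref{prop:origin-not-isolated} already tells us that every interval $(0,b)$ with $b > 0$ contains zeros of $B$, so the algorithm simply outputs ``non-empty''. If $a > 0$, the plan is to compute $M := \max_{[a,b]} B$ and $m := \min_{[a,b]} B$ layerwise in $B$ via Proposition~\ref{proposition:max-layerwise-computable}. By Proposition~\ref{prop:max-not-computable}, both $M$ and $m$ are Martin-L\"of random reals, hence nonzero; and the sign of any layerwise computable real known in advance to be nonzero is itself layerwise decidable (approximate until the approximation is bounded away from $0$, which must eventually happen at a rate depending only on the layer index of $B$). Meanwhile Proposition~\ref{prop:no-computable-zero} gives $B(a), B(b) \neq 0$, so $Z_B \cap (a,b) = Z_B \cap [a,b]$; and by Corollary~\ref{cor:zero-eq-crossing} the latter is nonempty iff $B$ changes sign on $[a,b]$, iff $m < 0 < M$. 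Reading off the signs of $m$ and $M$ therefore decides the question.

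The main obstacle is the combination rather than any individual step: the sign-decision subroutine is guaranteed to terminate only because $m$ and $M$ are known \emph{a priori} to be nonzero, and this in turn relies on the continuous-positive-density computation in Proposition~\ref{prop:max-not-computable}. Without that fact, an extremum value of exactly $0$ would send the sign-approximation procedure into an infinite loop. The analogous concern for the endpoints $a, b$ is handled by Proposition~\ref{prop:no-computable-zero}, which is what allows us to identify $Z_B \cap (a,b)$ with $Z_B \cap [a,b]$. Uniformity in $(a,b)$, and hence the layerwise recursiveness of the whole predicate, is then inherited from the uniformity of the max/min computation of Proposition~\ref{proposition:max-layerwise-computable}.
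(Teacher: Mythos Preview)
Your proof is correct and follows essentially the same approach as the paper: both handle the case $a=0$ via Proposition~\ref{prop:origin-not-isolated}, and for $a>0$ both rely on the layerwise computability of $\max(B,a,b)$ and $\min(B,a,b)$ together with the fact (Proposition~\ref{prop:max-not-computable}) that these extrema are never zero, so that presence of a zero on $(a,b)$ reduces to a sign check. The only cosmetic difference is that the paper phrases the argument as exhibiting both ``$Z_B\cap(a,b)\neq\emptyset$'' and its negation as $\Sigma^0_1$ events (the latter layerwise), whereas you compute the signs of $m$ and $M$ directly; the content is the same. One small remark: your parenthetical ``at a rate depending only on the layer index of $B$'' is not needed for layerwise decidability (a partial computable procedure that halts correctly on each $\mathcal{K}_n$ suffices), though it can in fact be justified via the effective compactness of $\mathcal{K}_n$.
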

\begin{proof}
$Z_B$ is closed because $B(t)$ is continuous. 

Let us now prove that $Z_B$ is decidable layerwise in~$B$. We need to see how to decide, layerwise in~$B$, whether~$B$ has a zero in a rational interval~$(a,b)$ with $a<b$. If~$a=0$, we know by Proposition~\ref{prop:origin-not-isolated} that answer is necessarily yes, so we can assume $a>0$. The first important observation is that, in case~$B$ does have a zero on $(a,b)$, it must take a positive and a negative value somewhere on the interval. Otherwise, $0$ would be a local maximum or minimum, which by Proposition~\ref{prop:max-not-computable} cannot happen. Conversely, having a positive and a negative value on the interval guarantees the existence of zero. Since having a positive and a negative value is a $\Sigma^0_1$ event, the predicate $\mathcal{C} \cap (a,b) \not= \emptyset$ is itself $\Sigma^0_1$, uniformly in~$B$. It remains to show that $\mathcal{C} \cap (a,b) = \emptyset$ is $\Sigma^0_1$ layerwise in~$B$. Note that by Proposition~\ref{prop:no-computable-zero}, $B$ cannot have a zero at~$a$ nor~$b$, so 
\[
\mathcal{C} \cap (a,b) = \emptyset \Leftrightarrow \max(B,a,b) > 0 ~\text{or}~  \min(B,a,b) < 0
\]
Since $\max(B,a,b)$ and $\min(B,a,b)$ are layerwise computable in~$B$, this shows that $\mathcal{C} \cap (a,b) = \emptyset$ is a $\Sigma^0_1$ predicate. 

\end{proof}

This theorem yields several useful corollaries.

\begin{corollary}\label{cor:first-zero-computable}
The first zero of~$B$ on an interval $[a,b]$ with $a<b$ rationals (taking value $\bot$ if there is no such zero) is computable layerwise in~$B$ and uniformly in $a,b$. 
\end{corollary}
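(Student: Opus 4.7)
My plan is to reduce the statement to the ``minimum of a recursive closed set is computable'' observation of Remark~\ref{rem:minimum}, applied layerwise in~$B$ via Theorem~\ref{thm:ZB-layerwise-recurive}.

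The first step is to dispose of edge cases. If $a=0$, then $B(0)=0$ identically (in the Franklin--Wiener representation every basis function vanishes at~$0$), so the first zero is~$0$. Otherwise $a>0$, and by Proposition~\ref{prop:no-computable-zero} neither the rational~$a$ nor the rational~$b$ is a zero of~$B$. Next, using the layerwise-in-$B$ decidability of the predicate ``$Z_B \cap (a,b)=\emptyset$'' furnished by Theorem~\ref{thm:ZB-layerwise-recurive}, I decide whether $Z_B \cap [a,b]$ is empty; if so, output~$\bot$.

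In the remaining case, let $t^\star = \min(Z_B \cap [a,b])$, which lies strictly inside~$(a,b)$. The crucial observation is that since by Proposition~\ref{prop:no-computable-zero} no rational $q \in (a,b)$ is a zero of~$B$, one has
\[
Z_B \cap (a,q) = \emptyset \iff t^\star \geq q, \qquad Z_B \cap (a,q) \neq \emptyset \iff t^\star < q.
\]
Each of these predicates is decidable layerwise in~$B$ and uniformly in $a,b,q$ by Theorem~\ref{thm:ZB-layerwise-recurive}. Following the argument of Remark~\ref{rem:minimum}, this gives
\[
t^\star = \sup\{q \in \Q \cap (a,b) : Z_B \cap (a,q) = \emptyset\} = \inf\{q \in \Q \cap (a,b) : Z_B \cap (a,q) \neq \emptyset\},
\]
exhibiting $t^\star$ as simultaneously lower and upper semicomputable layerwise in~$B$, uniformly in~$a,b$; hence $t^\star$ is layerwise computable in~$B$, uniformly in~$a,b$, as required.

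The only delicate point is endpoint behaviour: both semicomputations above would be ruined if $B$ were permitted to vanish at some rational in $(a,b)$, and it is precisely Proposition~\ref{prop:no-computable-zero} that excludes this. Everything else is a direct unpacking of the definitions of layerwise computability and recursiveness of closed sets.
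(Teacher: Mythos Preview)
Your proof is correct and follows essentially the same approach as the paper's: handle $a=0$ separately, use Proposition~\ref{prop:no-computable-zero} to rule out zeros at the rational endpoints, decide emptiness of $Z_B\cap(a,b)$ layerwise via Theorem~\ref{thm:ZB-layerwise-recurive}, and then compute the minimum using the argument of Remark~\ref{rem:minimum}. The only difference is cosmetic: the paper packages the last step as ``$Z_B\cap[a,b]$ is itself recursive layerwise in~$B$, hence its minimum is computable by Remark~\ref{rem:minimum}'', whereas you unfold that remark explicitly into the sup/inf description of~$t^\star$.
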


\begin{proof}
Again, note that  if $a=0$, then the first zero is $0$. Now, suppose $a>0$. By the Proposition~\ref{prop:no-computable-zero}, $B$ cannot have a zero at~$a$ nor at $b$, thus $Z_B \cap [a,b]= Z_B \cap (a,b)$, and one can immediately check whether the latter is empty (layerwise in~$B$) since $Z_B$ is recursive layerwise in~$B$. In case $Z_B \cap [a,b] \not= \emptyset$, and we have explained in Remark~\ref{rem:minimum} that the minimum of a recursive closed set can be computed (uniformly in a code for this closed set). It is easy to see that $Z_B \cap [a,b]$ is itself recursive layerwise in~$B$ and uniformly in $a,b$, thus its minimum element can be computed layerwise in~$B$ and uniformly in $a,b$. 
\end{proof}

\begin{corollary}\label{cor:finite-uninion-intervals}
If $F$ be is a finite union of rational intervals, $\Pr \{Z_\B \cap \mathcal{F} \not = \emptyset\}$ is computable uniformly in a code for~$F$. If $U$ is an effectively open subset of $[0,1]$, then $\Pr \{Z_\B \cap \mathcal{U} \not = \emptyset\}$ is lower semi-computable uniformly in an index for~$U$. 
\end{corollary}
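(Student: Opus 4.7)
My plan is to bootstrap from Theorem~\ref{thm:ZB-layerwise-recurive} (layerwise recursiveness of $Z_B$) plus the layerwise integration theorem (Theorem~\ref{thm:layerwise-integral}). The first statement will be reduced to ``the indicator of $\{Z_B \cap F \neq \emptyset\}$ is a bounded layerwise computable function of $B$''; the second will follow by approximating $U$ from below by finite unions of rational intervals and invoking continuity of measure.

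For the first part, fix $F = \bigcup_{i \leq N} I_i$ where each $I_i$ is a rational interval. For a \ML random path $B$, I would express
\[
\{Z_B \cap F \neq \emptyset\} \Longleftrightarrow \bigvee_{i \leq N} \{Z_B \cap I_i \neq \emptyset\}.
\]
For each open rational interval $(a,b)$ with $a>0$, emptiness of $Z_B \cap (a,b)$ is decidable layerwise in $B$ by Theorem~\ref{thm:ZB-layerwise-recurive}; for closed or half-closed intervals one uses Proposition~\ref{prop:no-computable-zero} to identify $Z_B \cap [a,b]$ with $Z_B \cap (a,b)$ when $a>0$, and handles the case $0 \in F$ by observing that $B(0)=0$ so the event is trivially true. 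Taking a finite disjunction of decidable predicates is again decidable, so $\mathbf{1}_{\{Z_B \cap F \neq \emptyset\}}$ is a layerwise computable $\{0,1\}$-valued function of $B$, uniformly in a code for $F$. Since it is bounded by $1$, Theorem~\ref{thm:layerwise-integral} gives computability (uniformly in the code) of
\[
\Pr\{Z_\B \cap F \neq \emptyset\} = \int \mathbf{1}_{\{Z_B \cap F \neq \emptyset\}}\, d\Pr(B).
\]

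For the second part, write the effectively open set $U \subseteq [0,1]$ as $U = \bigcup_{n} J_n$ for a computable sequence of rational intervals $J_n$, and set $F_n = \bigcup_{k \leq n} J_k$. Each $F_n$ is a finite union of rational intervals with a uniformly computable code, and $F_n \nearrow U$. By continuity of the Wiener measure from below applied to the increasing sequence of events $\{Z_\B \cap F_n \neq \emptyset\}$,
\[
\Pr\{Z_\B \cap U \neq \emptyset\} = \sup_n \Pr\{Z_\B \cap F_n \neq \emptyset\},
\]
which exhibits the probability as the supremum of a uniformly computable sequence of reals, hence lower semi-computable uniformly in an index for $U$.

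The only mild subtlety I anticipate is the boundary-point bookkeeping in the first part: making sure that the decidability of $\{Z_B \cap I \neq \emptyset\}$ does not depend on whether the rational interval $I$ is open, closed, or half-open. This is handled cleanly by Proposition~\ref{prop:no-computable-zero}, which guarantees that for $B$ \ML random the rational endpoints (other than $0$) contribute nothing to $Z_B$, so the three variants of $I$ give the same event. Everything else is a direct application of the two tools cited above.
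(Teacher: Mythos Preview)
Your proposal is correct and follows essentially the same route as the paper: show that $\mathbf{1}_{\{Z_B\cap F\neq\emptyset\}}$ is layerwise computable via Theorem~\ref{thm:ZB-layerwise-recurive}, integrate with Theorem~\ref{thm:layerwise-integral}, and for the open-set case take a supremum over finite-stage approximations of~$U$. Your extra care about open/closed endpoints and the case $0\in F$ (handled via Proposition~\ref{prop:no-computable-zero} and $B(0)=0$) is a welcome clarification that the paper leaves implicit.
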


\begin{proof}
For a given~$F$, let $\mathcal{E}_F$ be the event $[Z_\B \cap F \not= \emptyset]$. By Theorem~\ref{thm:ZB-layerwise-recurive}, the characteristic function $\mathbf{1}_{\mathcal{E}_F}$ is layerwise computable, uniformly in a code for~$F$. Thus, by Theorem~\ref{thm:layerwise-integral}
\[
\Pr [Z_\B \cap F \not= \emptyset] = \int_B \mathbf{1}_{\mathcal{E}_F}(B)\, d\Pr(B)
\]
is computable uniformly in a code for~$F$. To get the lower semi-computability of $\Pr \{Z_\B \cap \mathcal{U} \not = \emptyset\}$ when $\mathcal{U}$ is an effectively open set, it suffices to observe that 
\[
\Pr [Z_\B \cap U \not= \emptyset] = \sup_t \Pr [Z_\B \cap U[t] \not= \emptyset]
\]
where $U[t]$ is the approximation of~$U$ at stage~$t$, which is a finite union of rational intervals. 
\end{proof}

Finally, we show that $Z_B$ has no isolated point for~$B$ \ML random.

\begin{proposition}
For $B$ \ML random, $Z_B$ has no isolated point. 
\end{proposition}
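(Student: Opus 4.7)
The plan is to reduce the claim for an arbitrary zero $t_0$ of $B$ to Proposition~\ref{prop:origin-not-isolated} (which handles $t_0=0$) via the constructive strong Markov property. The argument will be by contradiction: if $t_0\in Z_B$ were isolated, one shifts $B$ so that $t_0$ becomes the ``origin'' of a new Martin-L\"of random Brownian motion, and then contradicts Proposition~\ref{prop:origin-not-isolated}.

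Concretely, suppose $t_0\in Z_B$ is isolated; Proposition~\ref{prop:origin-not-isolated} rules out $t_0=0$, so assume $t_0>0$. Fix $\delta>0$ with $Z_B\cap (t_0-\delta,t_0+\delta)=\{t_0\}$, and a rational $q\in(t_0-\delta,t_0)$. Define $T_q(B)$ to be the first zero of $B$ in $[q,\infty)$. First I would verify that $T_q$ is a layerwise computable stopping time: it is a stopping time because $\{T_q\leq t\}$ is determined by $B\uh[0,t]$; and it is layerwise computable because Proposition~\ref{prop:hitting-prob-1} gives an effectively summable upper bound on $\Pr_0(B\text{ has no zero in }[q,q+n])$ as $n\to\infty$, while Corollary~\ref{cor:zero-eq-crossing} makes ``no zero in a rational interval'' a $\Sigma^0_1$ event for Martin-L\"of random $B$. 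Together these give a Solovay test forcing some layerwise-computable $N$ with $Z_B\cap [q,N]\neq\emptyset$, after which Corollary~\ref{cor:first-zero-computable} computes the first such zero, namely $T_q(B)$, layerwise in $B$.

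By the isolation assumption, $T_q(B)=t_0$. The constructive strong Markov property then produces $\widehat{B}(t)=B(T_q+t)-B(T_q)=B(T_q+t)$, which is Martin-L\"of random (in fact, random relative to $B\uh[0,T_q]$). Applying Proposition~\ref{prop:origin-not-isolated} to $\widehat{B}$ yields arbitrarily small $s>0$ with $\widehat{B}(s)=0$, i.e., $B(t_0+s)=0$; choosing $s<\delta$ produces a zero of $B$ in $(t_0,t_0+\delta)\setminus\{t_0\}$, contradicting the choice of $\delta$.

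The main obstacle I anticipate is the careful verification that $T_q$ is layerwise computable and almost surely finite on Martin-L\"of random paths, since this requires combining the hitting-probability bound of Proposition~\ref{prop:hitting-prob-1} with Corollary~\ref{cor:first-zero-computable} in a quantitative, layerwise fashion. Everything else is a mechanical translation of the origin-specific statement to an arbitrary zero via strong Markov, with no new probability estimates required.
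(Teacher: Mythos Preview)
Your proposal is correct and follows essentially the same approach as the paper: both use the layerwise computability of the first zero after a rational (Corollary~\ref{cor:first-zero-computable}), the constructive strong Markov property, and Proposition~\ref{prop:origin-not-isolated} as the engine. The only structural difference is that the paper argues directly by splitting into two cases (zeros of the form $\tau_q$ are not isolated from the right via strong Markov; all other zeros are not isolated from the left since $\tau_{q_n}\uparrow t_0$), whereas you phrase it as a single contradiction by observing that any isolated zero would itself be a $\tau_q$---this is a minor repackaging, not a different idea.
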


\begin{proof}

Consider $\tau_q = \text{inf} \{ t \geq q : B(t) = 0\}$, the first zero after some $q \in \mathbb{Q}$. By closure of $Z_B$, the infimum is a minimum. Moreover, $\tau_q$ is layerwise computable in $B$ by Corollary~\ref{cor:first-zero-computable}  and is an almost surely finite stopping time. Thus by the constructive strong Markov property, $\tau_q$ is not an isolated zero from the right. 

Now, consider zeros that are not of the form $\tau_q$. Call some such zero $t_0$. To see it is not isolated from the left, consider a sequence of rationals $q_n \uparrow t_0$. By assumption on~ $t_0$, for all $n$ there is some $\tau_{q_n} \in (q_n, t_0)$, so $t_0$ is not an isolated zero from the left. 
\end{proof}

\subsection{Effective version of Kahane's Theorem}

Next, we prove an effective version of the following theorem of Kahane's, which we will need in the next section.

\begin{theorem}[Kahane]\label{thm:kahane}
Let $E_1$ and $E_2$ be two (disjoint) closed subsets of $[0,1]$ such that $\dim(E_1 \times E_2) >1/2$ then:
\[
\Pr(B[E_1] \cap B[E_2] \not= \emptyset)>0
\]
\end{theorem}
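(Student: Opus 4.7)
The plan is to follow the standard second-moment / Frostman-energy method. Fix $s \in (1/2, \dim(E_1 \times E_2))$ and invoke Frostman's lemma to obtain a Borel probability measure $\mu$ supported on $E_1 \times E_2$ with finite $s$-energy
\[
I_s(\mu) = \iint \frac{d\mu(u)\, d\mu(u')}{\|u - u'\|^{s}} < \infty.
\]
Because $E_1$ and $E_2$ are disjoint and closed in $[0,1]$, there is $\delta>0$ with $|y-x| \geq \delta$ for every $(x,y) \in \mathrm{supp}(\mu)$, which keeps the Gaussian densities appearing below uniformly bounded.

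Next I would introduce the smoothed counting statistic
\[
Y_\epsilon = \frac{1}{2\epsilon}\int \mathbf{1}_{[-\epsilon,\epsilon]}\!\big(B(y)-B(x)\big)\, d\mu(x,y),
\]
which heuristically measures the $\mu$-weight of pairs $(x,y) \in E_1 \times E_2$ satisfying $|B(y) - B(x)| \le \epsilon$. The goal is to show $\Pr(Y_\epsilon > 0) \geq c_0 > 0$ uniformly in small $\epsilon$, and then deduce the theorem by sending $\epsilon \to 0$ and applying a compactness argument to the continuous path~$B$. The first moment is the easy part: since $B(y)-B(x) \sim \mathcal{N}(0, y-x)$ and $y-x \geq \delta$ on the support of $\mu$, Fubini gives
\[
\E[Y_\epsilon] \xrightarrow[\epsilon \to 0]{} \int \frac{d\mu(x,y)}{\sqrt{2\pi(y-x)}},
\]
a strictly positive finite constant depending only on $\mu$ and $\delta$.

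The second moment is the main technical step. Writing
\[
\E[Y_\epsilon^2] = \iint (2\epsilon)^{-2}\,\Pr\!\big( |B(y)\!-\!B(x)|\!\leq\!\epsilon,\ |B(y')\!-\!B(x')|\!\leq\!\epsilon \big)\, d\mu(u)\, d\mu(u'),
\]
the inner probability is bounded above by $(2\epsilon)^2/\big(2\pi\sqrt{\det \Sigma(u,u')}\big)$, where $\Sigma(u,u')$ is the covariance matrix of the centred Gaussian vector $\big(B(y)-B(x),\, B(y')-B(x')\big)$. The key estimate to establish is
\[
\frac{1}{\sqrt{\det \Sigma(u,u')}} \;\leq\; \frac{C_\delta}{\|u - u'\|^{s}}
\]
on $\mathrm{supp}(\mu)\times\mathrm{supp}(\mu)$, which converts $\E[Y_\epsilon^2]$ into a constant multiple of the Frostman energy $I_s(\mu)$ and thereby bounds it independently of $\epsilon$. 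Paley-Zygmund then yields $\Pr(Y_\epsilon > 0) \geq (\E Y_\epsilon)^2/\E[Y_\epsilon^2] \geq c_0 > 0$ uniformly in $\epsilon$. Since $\{Y_\epsilon > 0\}$ is decreasing in $\epsilon$, we obtain $\Pr\!\big(\bigcap_n \{Y_{1/n} > 0\}\big) \geq c_0$; on this event, compactness of $\mathrm{supp}(\mu)$ together with continuity of $B$ furnishes a subsequential limit $(x^*,y^*) \in E_1 \times E_2$ with $B(x^*) = B(y^*)$, witnessing $B[E_1]\cap B[E_2] \neq \emptyset$.

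I expect the main obstacle to be the lower bound on $\det \Sigma(u,u')$, because it requires casework on how the four time-points $x,y,x',y'$ interleave, and the exponent $s>1/2$ in the denominator must be extracted by a careful Taylor expansion of the determinant near the diagonal $u = u'$ (after exploiting the off-diagonal separation forced by $E_1 \cap E_2 = \emptyset$ to rule out the genuinely singular configurations $y-x \to 0$ and $y'-x' \to 0$). Once this geometric estimate is in hand, the remainder is routine integration against $\mu$, and by tracking uniformity in the parameters $(\delta, s, I_s(\mu))$ the argument should lend itself to an effective restatement suitable for the application in the following section.
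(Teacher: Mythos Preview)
The paper does not prove this statement at all: Theorem~\ref{thm:kahane} is stated as a classical result of Kahane and is then used as a black box in the proof of the effective version (Theorem~\ref{thm:effective-kahane}), together with Blumenthal's $0$--$1$ law and the effective ergodic theorem. So there is nothing in the paper to compare your argument against.

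That said, your outline is the standard energy/second-moment route to results of this type and is essentially correct. A couple of remarks on the one step you flag as the obstacle. The natural estimate you will actually get is $\det\Sigma(u,u') \geq c_\delta\,\|u-u'\|$ (exponent~$1$, not~$2s$): in every ordering of the four times $x,x',y,y'$ with $x,x'\in E_1$, $y,y'\in E_2$, the disjointness gap $\delta$ forces either $\det\Sigma \geq c\delta^2$ outright (when the pairs interleave and the increments are independent or nearly so) or $\det\Sigma \geq \delta\,(|x-x'|+|y-y'|)$ (when the pairs nest or abut). This yields $1/\sqrt{\det\Sigma} \leq C_\delta\,\|u-u'\|^{-1/2}$, which is exactly what you need, since finiteness of $I_s(\mu)$ for some $s>1/2$ on a bounded support immediately gives finiteness of $I_{1/2}(\mu)$. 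In other words, the inequality you wrote with exponent $s$ holds, but only because $\|u-u'\|^{-1/2} \leq C\|u-u'\|^{-s}$ on a bounded set when $s\geq 1/2$; the determinant itself does not see the exponent~$s$. The rest of your argument (first moment bounded below via the $\delta$-gap, Paley--Zygmund, compactness to pass to the limit) goes through without difficulty.
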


(where $B[E]$ is the set $\{B(t): t \in E\}$ and $\dim$ denotes Hausdorff dimension). We shall prove the following. 

\begin{theorem}\label{thm:effective-kahane}
Let $E_1$ and $E_2$ be two (disjoint) $\Pi^0_1$ classes such that $\dim(E_1 \times E_2) >1/2$ then:
\begin{itemize}
\item[(i)] There exists a Martin-L\"of random path $B$ such that $B[E_1] \cap B[E_2] \not= \emptyset$
\item[(ii)] Given a fixed Martin-L\"of random path $B$, there exists an integer $c$ such that $B[E_1/c] \cap B[E_2/c] \not= \emptyset$
\end{itemize}
\end{theorem}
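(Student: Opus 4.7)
The plan for (i) is to establish that the set $A = \{B \in C[0,1] : B[E_1] \cap B[E_2] \neq \emptyset\}$ is a $\Pi^0_1$ class of positive Wiener measure; since the ML random paths form a set of full measure, $A$ automatically contains at least one such path. The $\Pi^0_1$-ness would be proved as follows: because $E_1$ and $E_2$ are $\Pi^0_1$ classes in $[0,1]$, their product $E_1 \times E_2$ is effectively compact, and hence the minimum function $h(B) := \min\{|B(s)-B(t)| : (s,t) \in E_1 \times E_2\}$ is computable from $B$ (minimum of a computable continuous function over an effectively compact set). The event $B \in A$ is then the $\Pi^0_1$ condition $h(B)=0$. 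The positivity $\Pr(A) > 0$ is precisely the conclusion of Kahane's classical Theorem~\ref{thm:kahane}.

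For (ii), my plan is to combine the Brownian scaling theorem with (an effective form of) Blumenthal's $0$-$1$ law. Let $A_c = \{B : B[E_1/c] \cap B[E_2/c] \neq \emptyset\}$; by the same argument as in (i) each $A_c$ is a $\Pi^0_1$ class, uniformly in $c$. The scaling map $S_c : B(t) \mapsto \sqrt{c}\,B(t/c)$ is a computable, Wiener-measure-preserving automorphism of $C[0,1]$ satisfying $B \in A_c \Leftrightarrow S_c(B) \in A$, so $\Pr(A_c) = \Pr(A) = p > 0$ for every integer $c$. Since $A_c$ depends only on $B\uh[0,1/c]$, the union $\bigcup_{c \geq c_0} A_c$ is measurable with respect to $\mathcal{F}_{1/c_0}$, and its diagonal intersection $\bigcap_{c_0}\bigcup_{c \geq c_0} A_c$ is a germ event at time $0$ of probability $\geq p$. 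Classical Blumenthal therefore gives $\Pr(\bigcup_c A_c)=1$.

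To upgrade this to the effective statement --- that every ML random $B$ lies in $\bigcup_c A_c$ --- I would invoke the constructive strong Markov property at a layerwise computable sequence of stopping times $T_k \downarrow 0$ (for instance, $T_k = $ first zero of $B$ past $2^{-k}$, which is layerwise computable by Corollary~\ref{cor:first-zero-computable} and tends to $0$ thanks to Proposition~\ref{prop:origin-not-isolated}). The strong Markov yields a family of genuinely independent Brownian motions $\hat B_k$ built from the disjoint increments of $B$ on the intervals $[T_{k+1},T_k]$; after the natural time-rescaling, each $\hat B_k$ is a ML random BM on $[0,1]$, and on it the Kahane event for $(E_1,E_2)$ (the class $A$ of part~(i)) has probability $p$. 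Independence of the $\hat B_k$ across $k$ yields the computable bound $(1-p)^N$ on the probability of simultaneous failure at scales $1,\ldots,N$, from which a uniform Martin-L\"of test covering the exceptional set $\bigcap_c A_c^c$ can be extracted.

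The main technical obstacle I anticipate is matching the form of the successful event on $\hat B_k$ with the target statement ``$B[E_1/c] \cap B[E_2/c] \neq \emptyset$ for some integer $c$''. Directly, the strong Markov decomposition produces on $B$ events of the shifted-and-rescaled form $B[T_{k+1}+L_k E_1] \cap B[T_{k+1}+L_k E_2] \neq \emptyset$ with length $L_k = T_k - T_{k+1}$, rather than integer-scaled copies of the $E_i$. I expect to close this gap either by choosing the stopping times so that $L_k = 1/c_k$ is the reciprocal of an integer (and then handling the shift $T_{k+1}$ by a second application of the Markov property) or by an approximation argument showing that the $\Pi^0_1$ event of positive probability is stable under sufficiently small rescalings and translations of the $E_i$, which would allow the conclusion to be pulled back to the integer form required by the theorem.
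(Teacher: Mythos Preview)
Your treatment of part~(i) is correct and matches the paper: the complement $\mathcal{U}=\{B : B[E_1]\cap B[E_2]=\emptyset\}$ is $\Sigma^0_1$ (equivalently, your $A$ is $\Pi^0_1$), and Kahane's classical theorem gives $\Pr(\mathcal{U})<1$, so $A$ has positive measure and hence contains a Martin-L\"of random path. The paper in fact derives (i) from (ii) via scaling, but your direct argument is fine.

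For part~(ii), however, the paper takes a quite different and much shorter route, and the gap you yourself flag in your approach is genuine. The paper observes that the single scaling map $S:B\mapsto \tfrac12 B(4t)$ is a computable measure-preserving transformation of $(C[0,1],\Pr)$ which is \emph{ergodic}: any $S$-invariant event is a germ event at~$0$, hence has probability $0$ or~$1$ by Blumenthal. One then applies the effective Birkhoff ergodic theorem of Bienvenu--Day--Hoyrup--Mezhirov--Shen and Franklin--Greenberg--Miller--Ng: for any $\Sigma^0_1$ set $\mathcal{U}$ with $\Pr(\mathcal{U})<1$ and any Martin-L\"of random $B$, the orbit $(S^n B)_n$ leaves $\mathcal{U}$ with positive density, so in particular $S^n B\notin\mathcal{U}$ for some (infinitely many)~$n$, i.e.\ $B[E_1/4^n]\cap B[E_2/4^n]\neq\emptyset$. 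This delivers the integer~$c=4^n$ directly, with no form-matching problem.

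By contrast, your strong-Markov decomposition produces events of the shape $B[T_{k+1}+L_k E_i]$ with random shift $T_{k+1}$ and random scale $L_k$, and neither of your proposed fixes is convincing. You cannot force the random length $L_k=T_k-T_{k+1}$ to be the reciprocal of an integer while keeping the stopping-time structure; and the ``stability under small perturbation'' idea is unlikely to work for arbitrary $\Pi^0_1$ classes $E_1,E_2$, whose Hausdorff-dimension hypothesis gives no robustness under translation. Even granting independence of the $\hat B_k$, converting the $(1-p)^N$ bound into a genuine Martin-L\"of test on $B$ would require the simultaneous-failure event to be uniformly $\Sigma^0_1$ in~$B$, which is delicate since the $T_k$ are only layerwise computable. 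The missing idea is precisely the effective ergodic theorem, which packages all of this for you once you recognise that the scaling map is ergodic.
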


\begin{proof}
First of all, observe that item (i) of the theorem follows from item (ii). Indeed, if we have a ML random path $B$ and an integer~$c$ such that $B[E_1/c] \cap B[E_2/c] \not= \emptyset$, by the scaling property, $\frac{1}{\sqrt{c}} B(ct)$ is also Martin-L\"of random and satisfies~(i). Thus we only need to prove~(ii). For this we will use the classical version of theorem (Kahane's) theorem, together with Blumenthal's 0-1 law and  some recent results of algorithmic randomness. Recall that Blumenthal's 0-1 law states that any event which only depends on a infinitesimal time interval on the right of the origin (formally, any event in the $\sigma$-algebra $\bigcap_{s>0} \sigma \{B(t): 0\leq t \leq s\}$) has probability either zero or one (see~\cite[Theorem 2.7]{MP}). 

Consider the scaling map $S:B \mapsto \frac{1}{2}B(4t)$. As we saw in Subsection~\ref{subsec:scaling}, $S$ is computable and preserves Wiener measure $\Pr$ on $C[0,1]$. Moreover, this map is \emph{ergodic}. Indeed, let $\mathcal{A}$ be an $\Pr$-measurable event which is invariant under~$S$, i.e we have $B \in \mathcal{A} \Leftrightarrow S(B) \in \mathcal{A}$. By induction,  $B \in \mathcal{A} \Leftrightarrow \forall n\, S^n(B) \in \mathcal{A}$. The function $S^n(B)$ on $[0,1]$ only depends on the values of $B$ on~$[0,4^{-n}]$. Therefore the event $\mathcal{A}$, which is equal to $[\forall n\, S^n(B) \in \mathcal{A}]$, only depends on the germ of~$B$. By Blumenthal's 0-1 law, this ensures that $\mathcal{A}$ has probability $0$ or $1$. Thus $S$ is ergodic. 

Now, consider the set 
\[
\mathcal{U} = \{B \mid B[E_1] \cap B[E_2] = \emptyset\}
\]
We claim that $\mathcal{U}$ is a $\Sigma^0_1$ subset of $\C([0,1])$. This is because of a classical result in computable analysis: the image of a $\Pi^0_1$ class by a computable function is a $\Pi^0_1$ class. This fact is uniform: from an index of a $\Pi^0_1$ class $P$ and a computable function~$f$ on can effectively compute the index of the $\Pi^0_1$ class $f[P]$.  By uniform relativization, there is a computable function $\gamma$ s.t. given a pair $(f,P)$ where $f$ is a continuous function given as oracle, and $P$ is a $\Pi^0_1$ class of index~$e$, $\gamma(e)$ is an index for $f[P]$ as a $\Pi^{0,f}_1$-class. 
Here we have two $\Pi^0_1$ classes $E_1$ and $E_2$, say of respective indices $e_1$ and $e_2$. By the above discussion $B[E_1]$ and $B[E_2]$ have respective indices $\gamma(e_1)$ and $\gamma(e_2)$ as $\Pi^{0,B}_1$-classes and since the intersection of two $\Pi^0_1$ classes is index-computable, there is a computable function $\theta$ such that $B[E_1] \cap B[E_2]$ has index $\theta(e_1,e_2)$ as a $\Pi^{0,B}_1$-class. Since one can computably enumerate, uniformly in the oracle~$B$, the indices of $\Pi^{0,B}_1$-classes, it follows that the set $\mathcal{U}$ is $\Sigma^0_1$, as wanted. 

We can now apply the effective ergodic theorem proven in~\cite{BienvenuDHMS2012,FranklinGMN2011}: since~$\mathcal{U}$ has measure less than $1$ (by Kahane's theorem) and is a $\Sigma^0_1$ set, there are infinitely many~$n$ such that $S^n(B) \notin \mathcal{U}$ (in fact, the set of such $n$'s is a subset of~$\N$ of positive density), i.e., such that $B[E_1/2^n] \cap B[E_2/2^n] \not= \emptyset$.

\end{proof}


\section{The effective dimension of zeros}

Effective Hausdorff dimension is a modification of Hausdorff dimension for the computability setting. Intuitively, effective Hausdorff dimension describes how ``computably locatable" a point or set is in addition to its size. For example, an algorithmically random point in $\R^n$ has effective Hausdorff dimension $n$ because it can't be computably located any more precisely than a small computable ball, which has Hausdorff dimension $n$. 

There are many equivalent definitions of effective Hausdorff dimension, but we will use the following definition of Mayordomo\cite{MayordomoEffDim}. See the book by Downey and Hirschfeldt \cite{DH}, or papers by Lutz \cite{LutzEffFractaldim} and Reimann \cite{ReimannPhD,Reimann} for more details. 

\begin{definition}
The \emph{effective Hausdorff dimension} of $X \in 2^{\omega}$ is
$$
\cdim(x) \coloneqq \liminf_n \frac{K(X \uh n)}{n}
$$
\end{definition}
This definition can be extended to real numbers by identifying them with their binary representation. \\

In this section, we will try to characterize the effective dimension of the zeroes of \ML random paths. This can be broken down in two questions: 
\begin{enumerate}
\item Given a \ML random~$B$, what is the set $\{\cdim(x) \mid x>0 ~ \text{and} ~  x \in Z_B\}$?
\item Given a real~$x$, can we give a necessary or sufficient condition in terms of the effective dimension of~$x$ for the existence of some \ML random path which has a zero at~$x$? 
\end{enumerate}

As to the first question, Kjos-Hanssen and Nerode~\cite{KHN} have showed that with probability~$1$ over~$B$, $\{\cdim(x) \mid x>0 ~ \text{and} ~  x \in Z_B\}$ is dense in $[1/2,1]$\footnote{this is actually a stronger form of the theorem proven in~\cite{KHN}, but the proof of the latter can easily be adapted}. We make this more precise by showing that for every \ML random path~$B$ (not just almost all paths) $\{\cdim(x) \mid x>0 ~ \text{and} ~  x \in Z_B\}$ is contained in $[1/2,1]$ and contains all the computable reals $>1/2$ of this interval. 

We will answer the second question by proving that having effective dimension at least $1/2$ is necessary, while having effective strictly greater than $1/2$ is sufficient (but not having dimension $1/2$). 

\subsection{The dimension spectrum of $Z_B$}

The next theorem is a direct consequence of the effective version of Kahane's theorem. 

\begin{theorem}
Given a \ML random~path $B$ and computable real $\alpha>1/2$, there exists a real $x$ in~$Z_B$ of constructive dimension~$\alpha$. 
\end{theorem}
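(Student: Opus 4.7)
The plan is to apply the effective version of Kahane's theorem (Theorem~\ref{thm:effective-kahane}) with the singleton $E_1 = \{0\}$ against a suitably chosen $\Pi^0_1$ class $E_2$. The key observation is that $\{0\}/c = \{0\}$ and $B[\{0\}] = \{B(0)\} = \{0\}$ for every integer $c$, so the conclusion $B[E_1/c] \cap B[E_2/c] \neq \emptyset$ forces $0 \in B[E_2/c]$, which means there is some $t \in E_2/c$ with $B(t) = 0$---that is, a zero of $B$ in $(0,1]$ of the form $s/c$ with $s \in E_2$.

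To make this yield a zero of exact constructive dimension~$\alpha$, I would invoke the standard fact from effective dimension theory that for every computable $\alpha \in (0,1)$ there exists a nonempty $\Pi^0_1$ class $E \subset [\tfrac{1}{2},1]$ every element of which has constructive dimension exactly~$\alpha$ (see, e.g., \cite{ReimannPhD}). Since $E \cap \{0\} = \emptyset$ and $\dim(\{0\} \times E) = \dim(E) \geq \alpha > 1/2$, the hypotheses of Theorem~\ref{thm:effective-kahane} are met. Theorem~\ref{thm:effective-kahane}(ii) then produces an integer $c \geq 1$ and a time $t \in E/c$ with $B(t)=0$, hence $x := t \in Z_B$.

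To conclude, $cx = ct \in E$ so $\cdim(cx) = \alpha$, and since $c$ is a positive integer the map $y \mapsto cy$ is a computable bi-Lipschitz bijection between $[0,1/c]$ and $[0,1]$ and thus preserves constructive Hausdorff dimension; therefore $\cdim(x) = \cdim(cx) = \alpha$, as required.

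The main obstacle is ensuring the right $\Pi^0_1$ class~$E$ in the second step: the naive class $\{x : \forall n\; K(x \uh n) \geq \alpha n - O(1)\}$ is a $\Pi^0_1$ class where every element has constructive dimension at least~$\alpha$, but we need the dimension to be exactly $\alpha$, otherwise $x$ could end up with $\cdim(x) > \alpha$. One way to impose the matching upper bound is to additionally require the bits of $x$ to agree with a fixed computable thin mask (i.e.\ force a computable density of the bits to be~$0$), bringing the complexity down to $\alpha n + O(1)$; the result is still $\Pi^0_1$, of Hausdorff dimension exactly~$\alpha$, and (with a suitable affine shift) contained in $[\tfrac{1}{2},1]$.
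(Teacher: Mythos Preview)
Your overall strategy is exactly the paper's: apply Theorem~\ref{thm:effective-kahane} with $E_1=\{0\}$ against a $\Pi^0_1$ class $E_2$ all of whose points have constructive dimension exactly~$\alpha$, then observe that the integer rescaling does not change constructive dimension. The only substantive difference is the choice of~$E_2$.

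The paper takes $E_2$ to be the complement of the first level of the universal Martin-L\"of test for the Bernoulli measure~$\mu_p$, where $p<1/2$ is the computable solution of $-p\log p-(1-p)\log(1-p)=\alpha$. This choice dispatches both requirements at once: (i)~$\mu_p(E_2)\ge 1/2>0$ and every set of positive $\mu_p$-measure has Hausdorff dimension at least~$\alpha$, so $\dim(\{0\}\times E_2)=\dim(E_2)\ge\alpha>1/2$; (ii)~any $x\notin\mathcal U_1$ is a fortiori not in $\bigcap_n\mathcal U_n$, so every element of $E_2$ is $\mu_p$-random and hence has constructive dimension exactly~$\alpha$.

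You correctly flag the construction of $E_2$ as the main obstacle, but your proposed resolution has a gap. Intersecting a computable mask class with a fixed-constant complexity lower-bound class does produce a $\Pi^0_1$ set every element of which has $\cdim=\alpha$, but your assertion that the result is ``of Hausdorff dimension exactly~$\alpha$'' is precisely the hypothesis Kahane's theorem needs, and it does \emph{not} follow from the pointwise statement: a countable set of reals each of constructive dimension~$\alpha$ still has Hausdorff dimension~$0$. To repair this you must exhibit a Frostman-type measure giving your intersection positive mass (for instance the pushforward of Lebesgue along the mask bijection, after checking that for a large enough constant the complexity condition holds on a set of positive pushforward measure). Once you carry that out you have essentially rebuilt the Bernoulli-measure argument by hand, so the paper's choice of $E_2$ is both shorter and cleaner.
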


\begin{proof}
Let $B$ be such a path and~$\alpha$ such a real. Consider the Bernoulli measure $\mu_p$ (i.e., measure where each bit has probability~$p$ of being a zero, independently of all other bits) such that $p<1/2$ and $-p \log p - (1-p)\log (1-p)=\alpha$. Since $\alpha$ is computable, so is~$p$ (and hence~$\mu_p$), because the function $x \mapsto -x \log x - (1-x)\log (1-x)$ is computable and increasing on $[0,1/2]$. Let 
$E_1=\{0\}$ and $E_2$ be the complement of the first level of the universal Martin-L\"of test for $\mu_p$ (it is a $\Pi^0_1$ class since $\mu_p$ is computable). It is well-known that every set of positive $\mu_p$-measure has Hausdorff dimension $\geq \alpha$, and moreover that every $\mu_p$ random real has constructive Hausdorff dimension~$\alpha$ (see for example Reimann~\cite{ReimannPhD}). Applying Theorem~\ref{thm:effective-kahane}, there exists some $c$ such that $B[E_1/2^c] \cap B[E_2/2^c] \not= \emptyset$. That is, there is some $x \in E_2$ such that $B(2^c x)=0$. Multiplying by $2^c$ just adds $c$ zeros in the binary expansion of~$x$, thus $2^c x$ has the same constructive dimension as $x$, which is $\alpha$. 
\end{proof}

\begin{question}
The previous theorem could be strengthened with some additional effort to $\textbf{0}'$-computable $\alpha$. However, we conjecture that a stronger result is true, namely that for every \ML random~$B$, it holds that 
\[
\{\cdim(x) \mid x>0 ~ \text{and} ~  x \in Z_B\} = [1/2,1]
\]
We do not know how to show this and leave it as an open question. 
\end{question}

\subsection{Being a zero of an \ML random path}

We now address the second of the two above questions: what properties (in terms of effective dimension or Kolmogorov complexity) characterize the reals that belong to $Z_B$ for some \ML random $B$? To do so, we largely borrow from the work of Kjos-Hanssen~\cite{KjosHanssen2009}, but with a number of necessary adaptations to Brownian motion (the paper~\cite{KjosHanssen2009} studies a different stochastic process, namely random closed sets, a particular type of percolation limit sets). Proposition~\ref{prop:hitting-prob-1} gives us a precise expression for the probability of a Brownian motion~$\B$ to have a zero in a given interval. The key step needed to adapt Kjos-Hanssen's techniques is to estimate the probability for $\B$ to have a zero in each of \emph{two} intervals of the same length. 

\begin{proposition}\label{prop:hitting-prob-2}
Let $0<a<b<1$ and $\epsilon >0$. Suppose that the intervals $[a,a+\epsilon]$ and $[b,b+\epsilon]$ are disjoint. Let $\delta$ be the distance between them (i.e., $\delta=b-a-\epsilon$). Let $\A_1$ be the event ``$\B(s) = 0 \text{ for some }s_1 \in [a, a + \epsilon]$" and $\A_2$ be ``$\B(s) = 0 \text{ for some }s_2 \in [b, b + \epsilon]$". Then
\[
\Pr_0 \left(\A_1 \wedge \A_2 \right) \leq  \frac{\epsilon \cdot O(1)}{\sqrt{a \delta}}
\]
where the term $O(1)$ is a constant independent of $a,b,\epsilon$. 
\end{proposition}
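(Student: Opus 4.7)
The plan is to condition on the first zero of $\B$ in $[a, a+\epsilon]$ and then apply the strong Markov property to reduce the second half to an instance of Proposition~\ref{prop:hitting-prob-1}. More precisely, on the event $\A_1$, let $\tau = \inf\{t \geq a : \B(t) = 0\}$; this is a stopping time, and $\tau \in [a, a+\epsilon]$ exactly when $\A_1$ holds. By the classical strong Markov property at $\tau$, the process $\widehat{\B}(u) := \B(\tau + u)$ is a standard Brownian motion (starting at $0$) independent of $\F_\tau$.

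Now, on $\A_1$, the event $\A_2$ is equivalent to $\widehat{\B}$ having a zero in the interval $[b - \tau, b + \epsilon - \tau]$. Because $\tau \leq a + \epsilon$, this interval lies at distance at least $b - (a+\epsilon) = \delta$ from the origin. So conditionally on $\F_\tau$ (and on $\A_1$), Proposition~\ref{prop:hitting-prob-1} gives
\[
\Pr_0\!\left(\A_2 \mid \F_\tau\right) \,=\, \tfrac{2}{\pi} \arctan\!\sqrt{\tfrac{\epsilon}{b-\tau}} \,\leq\, \tfrac{2}{\pi}\sqrt{\tfrac{\epsilon}{\delta}},
\]
using the bounds $b - \tau \geq \delta$ and $\arctan(x) \leq x$.

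Taking expectations and multiplying by the unconditional probability of $\A_1$, which is itself bounded via Proposition~\ref{prop:hitting-prob-1} by $\tfrac{2}{\pi}\sqrt{\epsilon/a}$, yields
\[
\Pr_0(\A_1 \wedge \A_2) \,\leq\, \Pr_0(\A_1) \cdot \tfrac{2}{\pi}\sqrt{\tfrac{\epsilon}{\delta}} \,\leq\, \tfrac{4}{\pi^2}\cdot \tfrac{\epsilon}{\sqrt{a\delta}},
\]
which is the desired estimate with explicit constant $4/\pi^2$.

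The only subtlety I anticipate is justifying the application of the strong Markov property cleanly on the event $\A_1$: one must check that $\tau$ is measurable as a stopping time (which follows from the right-continuity of the Brownian filtration and the continuity of paths, since $\{\tau \leq t\} = \{\exists s \in [a, t]: \B(s) = 0\}$ for $t \geq a$), and that the "conditional" probability used above can be integrated against the distribution of $\tau$ in the standard way. Once this measure-theoretic setup is in place, the inequality $\arctan(x) \leq x$ and the uniform lower bound $b - \tau \geq \delta$ on $\A_1$ make the computation immediate, with no further dependence on $a, b, \epsilon$ in the absorbed $O(1)$ constant.
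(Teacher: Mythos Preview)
Your proof is correct and, in fact, somewhat cleaner than the paper's. Both arguments factor
\[
\Pr_0(\A_1 \wedge \A_2) \;\leq\; \Pr_0(\A_1)\cdot O\!\left(\sqrt{\tfrac{\epsilon}{\delta}}\right)
\]
and bound $\Pr_0(\A_1)$ via Proposition~\ref{prop:hitting-prob-1}; the difference lies in how the second factor is obtained. The paper applies the \emph{simple} Markov property at the deterministic time $a+\epsilon$, conditions on the value $z=\B(a+\epsilon)$, and then invokes Lemma~\ref{lem:bigger-x-smaller-prob} (a reflection/coupling argument) to reduce from $\Pr_z$ to~$\Pr_0$. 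You instead apply the \emph{strong} Markov property at the first zero $\tau$ in $[a,a+\epsilon]$, so the restarted process already begins at~$0$ and Lemma~\ref{lem:bigger-x-smaller-prob} is never needed; the bound $b-\tau \geq \delta$ and the monotonicity of $x\mapsto\arctan\sqrt{\epsilon/x}$ then finish the job immediately. Your route trades a coupling lemma for the strong Markov property, yields the explicit constant $4/\pi^2$, and is marginally shorter. One cosmetic point: your phrase ``taking expectations and multiplying by the unconditional probability of $\A_1$'' slightly obscures the logic---the single identity $\Pr_0(\A_1\wedge\A_2)=\E\!\big[\mathbf{1}_{\A_1}\,\Pr_0(\A_2\mid\F_\tau)\big]$ already produces the factor $\Pr_0(\A_1)$ once you bound the conditional probability by the constant $\tfrac{2}{\pi}\sqrt{\epsilon/\delta}$.
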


\begin{proof}
In this proof, we make use of the following notation: given an event $\A$, $\shift{\A}{\tau}$ the unique (by assumption on $\A$) event such that $t \mapsto B(t+s) \in \shift{\A}{\tau}$ if and only if $t \mapsto B(t) \in \A$.\\

Now, let $\A_1$ and $\A_2$ be the above events, and let us write
\[
\Pr_0 \left(\A_1 \wedge \A_2 \right) = \Pr_0 (\A_1)\Pr_0(\A_2 \mid \A_1)
\]
The term $\Pr_0 (\A_1)$ is, by Proposition~\ref{prop:hitting-prob-1}, equal to $O(\sqrt{\frac{\epsilon}{a}})$. It remains to evaluate the term $\Pr(\A_2 \mid \A_1)$. The event $\A_2$ only depends on the values of~$\B$ on the interval $[b,b+\epsilon]$, thus 
\[
\Pr_0(\A_2 \mid \A_1)= \int_{z \in \R} \Pr_{z} (\shift{\A_2}{(a+\epsilon)})\, f(z) \, dz
\]
where $f$ is the density function of $\B(a+\epsilon)$ conditioned by $\A_1$. By shift invariance of the Wiener measure, we observe that in this expression, the term $\Pr_{z} (\shift{\A_2}{(a+\epsilon)})$ is equal to $\Pr_z(\B\; \text{has a zero in}\, [\delta,\delta+\epsilon])$. This is, in turn, always bounded by $\Pr_0(\B\; \text{has a zero in}\, [\delta,\delta+\epsilon])$, by Proposition~\ref{prop:hitting-prob-1}. Thus
\begin{eqnarray*}
\Pr_0(\A_2 \mid \A_1) & = & \int_{z \in \R} \Pr_{z} (\shift{\A_2}{(a+\epsilon)})\, f(z) \, dz\\
 &  \leq & \int_{z \in \R} \Pr_{0} (\shift{\A_2}{(a+\epsilon)})\, f(z) \, dz\\
 & \leq &  \Pr_{0} (\shift{\A_2}{(a+\epsilon)}) \\
 & \leq & \Pr_0(\B\; \text{has a zero in}\, [\delta,\delta+\epsilon]) \\
 & \leq & \frac{2}{\pi} \arctan\left( \sqrt{\frac{\epsilon}{\delta}}\right) \\
 & \leq & \frac{2}{\pi}  \sqrt{\frac{\epsilon}{\delta}}
\end{eqnarray*}

We have thus established the desired result. 

\end{proof}

\subsubsection{A necessary and a sufficient condition}

Our next theorem gives a necessary condition for a point to be a zero of some \ML random path. 

\begin{theorem}\label{thm:effdimhigh}
If $B$ is a \ML random path, then all members of the set $Z_B \setminus \{0\}$ have effective dimension at least $1/2$. 
\end{theorem}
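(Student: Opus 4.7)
The plan is to construct, for each positive integer $k$ and each rational $\alpha < 1/2$, a Solovay test $\{U_n\}_{n \in \N}$ that catches any \ML random path possessing a zero $x \geq 2^{-k}$ with $K(x \uh n) < \alpha n$ for infinitely many~$n$. Since $\cdim(x) < 1/2$ is equivalent to the existence of such a rational~$\alpha$ for which $K(x \uh n) < \alpha n$ infinitely often, and since $x > 0$ forces $x \geq 2^{-k}$ for some~$k$, a \ML random path passing every such test can have no nonzero zero of effective dimension strictly below $1/2$.

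For fixed $k$ and $\alpha$, define
\[
U_n = \bigcup_{\substack{\sigma \in \{0,1\}^n \\ K(\sigma) < \alpha n \\ \sigma \cdot 2^{-n} \geq 2^{-k}}} \bigl\{B : B \text{ has a sign change in } (\sigma \cdot 2^{-n},\, (\sigma+1) \cdot 2^{-n})\bigr\}.
\]
The set $\{\sigma : K(\sigma) < \alpha n\}$ is c.e.\ uniformly in~$n$, and by Corollary~\ref{cor:zero-eq-crossing}, for a \ML random~$B$ the sign-change condition is equivalent to the existence of a zero in the given open interval, while being $\Sigma^0_1$ in~$B$. Hence $U_n$ is uniformly $\Sigma^0_1$. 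There are at most $O(2^{\alpha n})$ strings with $K(\sigma) < \alpha n$, and by Proposition~\ref{prop:hitting-prob-1} each corresponding interval of length $2^{-n}$ starting at some $a \geq 2^{-k}$ contributes at most $\tfrac{2}{\pi}\sqrt{2^{-n}/a} \leq C_k \cdot 2^{-n/2}$ to the probability. Thus $\Pr(U_n) \leq C_k \cdot 2^{(\alpha - 1/2) n}$, which is summable, so $\{U_n\}_n$ is a Solovay test.

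Now suppose for contradiction that $B$ is \ML random and $x \in Z_B \setminus \{0\}$ has $\cdim(x) < 1/2$. Choose rational $\alpha$ with $\cdim(x) < \alpha < 1/2$ and~$k$ with $x > 2^{-k+1}$, and apply the test associated with this $(k,\alpha)$. Setting $\sigma_n = x \uh n$, we have $K(\sigma_n) < \alpha n$ for infinitely many~$n$; for such~$n$ sufficiently large, $\sigma_n \cdot 2^{-n} \geq x - 2^{-n} \geq 2^{-k}$. By Proposition~\ref{prop:no-computable-zero}, $x$ is not a computable real, hence not a dyadic rational, so $x$ lies strictly inside $(\sigma_n \cdot 2^{-n},\, (\sigma_n+1) \cdot 2^{-n})$, forcing $B \in U_n$ for infinitely many~$n$ and contradicting the Solovay property.

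The one subtle step is checking that ``$B$ has a zero in a rational open interval'' is genuinely $\Sigma^0_1$ in~$B$, which is exactly what Corollary~\ref{cor:zero-eq-crossing} provides; without it, the test would only be $\Pi^0_2$ and the argument would collapse. Everything else is a routine ``counting versus hitting-probability'' estimate in the spirit of Kjos-Hanssen~\cite{KjosHanssen2009}, with Proposition~\ref{prop:hitting-prob-1} supplying the crucial $O(\sqrt{\epsilon/a})$ decay needed to balance the $2^{\alpha n}$ strings against the $2^{-n/2}$ hitting probability.
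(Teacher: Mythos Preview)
The proposal is correct and follows essentially the same approach as the paper: both combine the $O(2^{-n/2})$ hitting-probability bound from Proposition~\ref{prop:hitting-prob-1} with a Kolmogorov-complexity count to build a test that captures any path having a low-dimension zero bounded away from~$0$. The only cosmetic difference is that you package the argument as a Solovay test via the counting bound $|\{\sigma: K(\sigma)<\alpha n\}|\le 2^{\alpha n}$, whereas the paper builds a Martin-L\"of test directly using the Kraft-type estimate $\sum_\sigma 2^{-K(\sigma)}<\infty$.
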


\begin{proof}

Suppose that for a given $B$, we have $B(a)=0$ for some $a$ such that $\cdim(a) <1/2$. We will show that $B$ is not \ML random. 

Let $1/2<\rho<\cdim(a)$. Take also some rational $b$ such that $0<b<a$. By definition of constructive dimension, for all~$n$, there exists a prefix $\sigma$ of $a$ such that $K(\sigma) \leq \rho |\sigma|-n$. 
For all strings~$\sigma$ such that $0.\sigma>b$, let $I_\sigma=[0.\sigma, 0.\sigma+2^{-|\sigma|}]$ and the event 
\[
\mathcal{E}_\sigma: \left[ \B~\text{has a positive and a negative value in $I_\sigma$} \right]
\]

The event $\mathcal{E}_\sigma$ is a $\Sigma^0_1$ subset of $C [0,1]$, uniformly in~$\sigma$ the probability of $\mathcal{E}_\sigma$ is $O(2^{-|\sigma|/2})$ by Proposition~\ref{prop:hitting-prob-1} (the multiplicative constant depending on~$b$). Define

\[
\mathcal{U}_n = \bigcup \big\{\mathcal{E}_\sigma  \mid  K(\sigma) \leq \rho|\sigma|-n \big\}
\]

By assumption, $B$ belongs to almost all~$\mathcal{U}_n$. However, we have 
\begin{eqnarray*}
\Pr (B \in \mathcal{U}_n) & \leq & O(1) \cdot \sum \{2^{-|\sigma|/2} \mid K(\sigma) \leq \rho|\sigma|-n \big\}\\
 & \leq & O(1) \cdot \sum_\sigma 2^{-K(\sigma)-n}\\
 & \leq & O(2^{-n})
\end{eqnarray*}

Thus the $\mathcal{U}_n$ form a Martin-L\"of test, which shows that $B$ is not \ML random. 

\end{proof}

We now prove an (almost) counterpart of Theorem \ref{thm:effdimhigh}:

\begin{theorem}\label{thm:dim-zero}
Let $x \in [0,1]$ be of effective dimension strictly greater than~$1/2$. Then there exists a Martin-L\"of random path~$B$ such that $B(x)=0$. 
\end{theorem}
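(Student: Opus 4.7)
The plan is to adapt Kjos-Hanssen's capacity/energy technique from his work on random closed sets \cite{KjosHanssen2009}, substituting Propositions \ref{prop:hitting-prob-1} and \ref{prop:hitting-prob-2} for the hitting estimates used there. The case $x=0$ is trivial since $B(0)=0$ for every sample path, so assume $x>0$. Fix $s$ with $1/2 < s < \cdim(x)$; by definition of effective dimension, $K(x \uh n) \geq sn - O(1)$ for all sufficiently large $n$.

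First I would set up nested approximating events. Let $\sigma_n$ be the length-$n$ binary prefix of $x$, let $I_n = [0.\sigma_n, \, 0.\sigma_n + 2^{-n}]$ be the dyadic interval containing $x$, and let $E_n = \{B : B \text{ has a zero in } I_n\}$. By Proposition \ref{prop:hitting-prob-1}, $p_n := \Pr(E_n) = \Theta(2^{-n/2})$ uniformly in $n$, and by continuity $\bigcap_n E_n = \{B : B(x)=0\}$. Setting $f_n = p_n^{-1} \indic{E_n}$ and $d\nu_n = f_n \, d\Pr$ produces a uniformly $x$-computable sequence of probability measures on $C[0,1]$ concentrating on shorter and shorter neighborhoods of the fiber $\{B : B(x)=0\}$.

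Second I would carry out the energy/capacity computation to show that $(\nu_n)$ has a weak limit $\nu_\infty$ supported on $\{B : B(x)=0\}$. Proposition \ref{prop:hitting-prob-2} supplies the crucial two-interval estimate: its factor $1/\sqrt{a\delta}$ is precisely the Frostman-type kernel needed to bound $\int f_n f_m \, d\Pr$, and summing this bound over dyadic prefix-pairs with weights inherited from the complexity lower bound $K(\sigma_n) \geq sn$ yields a finite total energy exactly when $s > 1/2$. This is where the hypothesis is used and why the argument cannot be pushed down to the endpoint $s=1/2$.

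Third I would extract a $\Pr$-ML random path from the support of $\nu_\infty$. Because $\nu_\infty \perp \Pr$, Theorem \ref{thm:layerwise-computability} does not apply off the shelf; instead, one shows that for each level $k$ of the universal ML test $\{U_k\}$, $\nu_\infty(U_k)$ stays strictly below $1$, so that some $\nu_\infty$-typical path lies in $U_k^c$ and is therefore both $\Pr$-ML random and a path on which $x$ is a zero. Making this layerwise bound precise is the main obstacle I anticipate: it requires the energy bound from step two to transfer, via a truncation of the singular density $d\nu_\infty/d\Pr$, into a uniform control of the $\Pr$-tails of the approximating densities $f_n$. The computable lower bound $K(\sigma_n) \geq sn$ ensures the construction is effective in the right sense, while the strict inequality $s>1/2$ is what makes the relevant Frostman series converge.
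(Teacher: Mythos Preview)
Your proposal assembles the right raw materials---the hitting estimates of Propositions~\ref{prop:hitting-prob-1} and~\ref{prop:hitting-prob-2} and a second-moment/energy argument---but it works on the wrong side of the duality, and this is why the obstacle you flag in your third step is genuine rather than merely technical. Your measures $\nu_n = \Pr(\,\cdot \mid E_n)$ live on path space and depend only on the \emph{location} of~$x$, not on its complexity; the hypothesis $K(x\uh n)\ge sn$ therefore has no natural entry point when you try to bound $\nu_\infty(U_k)$. The sentence about ``summing over dyadic prefix-pairs with weights inherited from the complexity lower bound'' does not correspond to anything in your construction: there is only one interval~$I_n$ at each scale, so there is nothing to sum over, and no Frostman series appears.

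The paper circumvents this by reversing the roles of path and time. First it invokes Reimann's theorem: from $KM(x\uh n) \ge \beta n - O(1)$ (any $1/2<\beta<\cdim(x)$) one obtains a measure~$\mu$ on $[0,1]$ with the Frostman property $\mu(I) \le O(|I|^\beta)$ and with respect to which~$x$ is Martin-L\"of random. The second-moment computation (Lemma~\ref{lem:capacity-and-hitting}, which packages exactly the use of Propositions~\ref{prop:hitting-prob-1} and~\ref{prop:hitting-prob-2} you had in mind) then yields a uniform lower bound $\Pr(Z_\B \cap A \neq \emptyset) \ge c'\,\mu(A)^2$. Now one builds the test on the \emph{time} side: set $\mathcal{U}_n = \{t : B(t)\neq 0 \text{ for every } B \in \mathcal{K}_n\}$, where $\mathcal{K}_n$ is the $n$-th layer of $(C[0,1],\Pr)$. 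One checks $\mathcal{U}_n$ is $\Sigma^0_1$ uniformly in~$n$; since by construction $\Pr(Z_\B \cap \mathcal{U}_n \neq \emptyset) \le \Pr(\B \notin \mathcal{K}_n) \le 2^{-n}$, the energy lower bound forces $\mu(\mathcal{U}_n)=O(2^{-n/2})$. Thus $(\mathcal{U}_n)$ is a $\mu$-Martin-L\"of test, and $\mu$-randomness of~$x$ gives $x \notin \mathcal{U}_n$ for some~$n$, i.e., some $B \in \mathcal{K}_n$ has $B(x)=0$. The missing idea, then, is Reimann's theorem together with the switch from constructing a measure on paths to constructing a Martin-L\"of test on times; once that switch is made, the energy bound delivers the test estimate directly and your anticipated obstacle never arises.
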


The proof is much more difficult and involves the notion of $\alpha$-energy. Given a measure~$\mu$ on $\R$ and $\alpha \geq 0$, the \emph{$\alpha$-energy}  of $\mu$ is the quantity
\[
\int \int \frac{d\mu(x)d\mu(y)}{|x-y|^\alpha}
\]
This quantity might be finite or infinite, depending on the value of $\alpha$. We will need the following two lemmas. 

\begin{lemma}\label{lem:frostman2}
Let $\beta>\alpha\geq 0$. If $\mu$ is a measure satisfying  such that $\mu(A) \leq c \cdot |A|^\beta$ for every interval~$A$ (or equivalently, for every dyadic interval) and for some constant~$c$, then $\mu$ has finite $\alpha$-energy.  
\end{lemma}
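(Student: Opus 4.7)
The plan is to bound the inner integral $I(x) := \int |x-y|^{-\alpha}\, d\mu(y)$ uniformly in $x$ by a dyadic decomposition around $x$, and then integrate against $d\mu(x)$. Since the lemma is invoked in the context of measures on the unit interval, I will also tacitly use that $\mu$ is a finite measure (which follows from the growth hypothesis together with bounded support).

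First, for each $x$ and each $k \in \Z$, let $A_k(x) = \{y : 2^{-k-1} < |x - y| \le 2^{-k}\}$. This is contained in the union of two intervals of length $2^{-k}$, so the hypothesis gives $\mu(A_k(x)) \le 2c\, 2^{-k\beta}$, while on $A_k(x)$ we have $|x-y|^{-\alpha} \le 2^{(k+1)\alpha}$. Splitting $\R \setminus \{x\}$ into the shells with $k \ge 0$ (points within distance $1$) and $k < 0$ (points at distance $> 1$), the first part contributes
\[
\sum_{k \ge 0} 2^{(k+1)\alpha} \cdot 2c \cdot 2^{-k\beta} \;=\; 4c \sum_{k \ge 0} 2^{-k(\beta - \alpha)},
\]
which is a convergent geometric series since $\beta > \alpha$. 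The second part contributes at most $\mu(\R) < \infty$, because on $\{|x-y| > 1\}$ the integrand is bounded by $1$.

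Thus $I(x)$ is bounded by a constant $C = C(c,\alpha,\beta,\mu(\R))$ independent of $x$. Integrating, the $\alpha$-energy of $\mu$ is at most $C \cdot \mu(\R) < \infty$, which is the conclusion.

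The only step requiring care is the uniform bound on $I(x)$: if one tried instead to integrate directly, one would need to control the behavior of the integrand on the diagonal, whereas the dyadic annulus decomposition converts the Frostman-type mass bound $\mu(A) \le c|A|^\beta$ into a telescoping geometric sum that beats the singularity $|x-y|^{-\alpha}$ precisely when $\beta > \alpha$. I don't anticipate any real obstacle here; the argument is the standard potential-theoretic proof of one direction of Frostman's lemma.
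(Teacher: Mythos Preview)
Your proof is correct and is essentially the same dyadic-annulus argument the paper has in mind: the paper merely cites \cite{MP}, Theorem~4.32, and the more detailed computation in Proposition~\ref{prop:energy-refinement} carries out exactly your shell decomposition $\{2^{-k-1}<|x-y|\le 2^{-k}\}$ to bound the inner potential $\int |x-y|^{-\alpha}\,d\mu(y)$ uniformly in~$x$. (A trivial remark: your leading constant should be $2^{1+\alpha}c$ rather than $4c$, but this has no bearing on the argument.)
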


\begin{proof}
See~\cite{MP}, proof of Theorem 4.32. 
\end {proof}

\begin{lemma}\label{lem:capacity-and-hitting}
Let $\beta>1/2$ and let $\mu$ be a finite Borel measure on $[0,1]$ such that for every dyadic interval $I$, $\mu(I) \leq c \cdot |I|^\beta$ for some fixed constant~$c$ (and thus by the previous lemma $\mu$ has finite $1/2$-energy).  Then there exists a constant~$c'>0$ such that the following holds: for any set $A \subseteq [1/2,1]$ which is  a countable union of closed dyadic intervals
\[
\Pr_0 \big( Z_\B \cap A \not= \emptyset \big)  \geq c' \cdot \mu(A)^2
\]
\end{lemma}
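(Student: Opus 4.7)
My plan is to apply a Paley--Zygmund / second-moment argument, using the measure $\mu$ as a test weight. After a routine reduction to the case where $A$ is a finite union of closed dyadic intervals (by monotone convergence, using $\mu(A_m)\to\mu(A)$ and $\Pr(Z_\B\cap A\neq\emptyset)=\lim_m\Pr(Z_\B\cap A_m\neq\emptyset)$ for $A_m\uparrow A$), pick $n$ large enough that $A$ coincides (up to $\mu$-null boundary points, of which there are none since the Frostman hypothesis forbids atoms) with a disjoint union $\bigsqcup_{I\in\mathcal{I}_n}I$ of level-$n$ dyadic intervals. Letting $\A_I$ denote the event ``$\B$ has a zero in $I$'', set
\[
Y_n \;=\; \sum_{I\in \mathcal{I}_n} \frac{\indic{\A_I}}{\Pr(\A_I)}\,\mu(I).
\]
By Proposition~\ref{prop:hitting-prob-1} we have $\Pr(\A_I)=\Theta(2^{-n/2})$ since $I\subseteq[1/2,1]$, so $Y_n$ is a well-defined non-negative random variable, and linearity gives $\E[Y_n]=\sum_{I\in\mathcal{I}_n}\mu(I)=\mu(A)$.

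The bulk of the proof is to bound $\E[Y_n^2]\le M$ uniformly in $n$, with $M$ depending only on $\mu$. Expanding,
\[
\E[Y_n^2] \;=\; \sum_{I,J\in\mathcal{I}_n}\frac{\Pr(\A_I\cap\A_J)}{\Pr(\A_I)\Pr(\A_J)}\,\mu(I)\mu(J),
\]
and I will split the sum into three regimes. For the diagonal $I=J$ the Frostman estimate $\mu(I)\le c\cdot 2^{-n\beta}$ produces a total of order $2^{n(1/2-\beta)}c\,\mu(A)$, which is bounded (indeed vanishes) because $\beta>1/2$. For \emph{adjacent} pairs (distance $0$), the crude bound $\Pr(\A_I\cap\A_J)\le\Pr(\B~\text{has a zero in}~I\cup J)=O(2^{-n/2})$ yields a contribution of the same order. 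For \emph{separated} pairs with distance $\delta\ge 2^{-n}$, Proposition~\ref{prop:hitting-prob-2} (combined with $a\ge 1/2$) gives $\Pr(\A_I\cap\A_J)/(\Pr(\A_I)\Pr(\A_J))\le O(1)/\sqrt{\delta}$; since $|x-y|\le 3\delta$ for any $x\in I$, $y\in J$, this is at most $O(1)/|x-y|^{1/2}$. Summing and comparing the result to a Riemann sum gives
\[
\sum_{\text{separated}}\frac{\mu(I)\mu(J)}{\sqrt{\delta}} \;\le\; O(1)\!\int\!\!\int\!\frac{d\mu(x)\,d\mu(y)}{|x-y|^{1/2}} \;=\; O(1)\cdot I_{1/2}(\mu)\;<\;\infty
\]
by Lemma~\ref{lem:frostman2}. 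Combining the three regimes, $\E[Y_n^2]\le M$ for all sufficiently large $n$.

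The conclusion is then immediate from Paley--Zygmund:
\[
\Pr(Y_n>0) \;\ge\; \frac{(\E[Y_n])^2}{\E[Y_n^2]} \;\ge\; \frac{\mu(A)^2}{M},
\]
and since $Y_n>0$ forces some $\A_I$ to occur, i.e.\ $Z_\B\cap A\neq\emptyset$, the lemma holds with $c'=1/M$. The main obstacle is the second-moment bound, and specifically the bookkeeping across the three regimes (diagonal / adjacent / separated) together with the discrete-to-continuous comparison that converts the sum into the $1/2$-energy integral; everything else is a transparent application of Paley--Zygmund, directly parallel to the classical capacity lower bound for hitting probabilities of Brownian motion.
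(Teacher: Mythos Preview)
Your proof is correct and follows essentially the same route as the paper: a Paley--Zygmund/second-moment argument on a weighted indicator sum over level-$n$ dyadic subintervals of $A$, with the second moment split into diagonal, adjacent, and separated regimes and the latter controlled via Proposition~\ref{prop:hitting-prob-2} and the finite $1/2$-energy. The only cosmetic difference is your normalization by $1/\Pr(\A_I)$ rather than the paper's explicit factor $2^{n/2}$, which makes $\E[Y_n]=\mu(A)$ exact instead of $\ge c_1\,\mu(A)$; the remaining estimates are identical.
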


\begin{proof}
It suffices to prove this theorem for a finite number of intervals, and up to splitting them if necessary we can assume that they all have the same length~$2^{-n}$ for some~$n$. Let $I_1,...,I_k$ be those intervals. Define for all~$k$ the random variable $X_k$ by
\[
X_k = \mu(I_k) \cdot 2^{(n/2)} \cdot \indic{\{Z_\B \cap I_k \not= \emptyset\}}
\]
and $Y=\sum_{j=1}^k X_j$. We want to show that $\Pr(Y>0) \geq \frac{\mu(A)^2}{c_0}$ for constant~$c_0$  which does not depend on~$A$, which immediately gives the result (since $Y>0$ is equivalent to $Z_B \cap A \not=\emptyset$). To do so, we will use the Chebychev-Cantelli inequality
\[
\Pr(Y>0) \geq \frac{\mathbb{E}(Y)^2}{\mathbb{E}(Y^2)}
\]
Let us evaluate separately $\mathbb{E}(Y)$ and $\mathbb{E}(Y^2)$. We have
\begin{eqnarray*}
\mathbb{E}(Y) & = & \sum_{j=1}^k \mathbb{E}(X_j)\\
 & \geq & \sum_{j=1}^k 2^{(n/2)} \cdot \mu(I_j) \cdot c_1 \cdot (\sqrt{2^{-n}})\\
 & \geq & c_1 \sum_{j=1}^k  \cdot \mu(I_j) \\
 & \geq & c_1 \cdot \mu(A)
\end{eqnarray*}
for some constant $c_1 \not = 0$, the second inequality coming from Proposition~\ref{prop:hitting-prob-1}. 

Let us now turn to $\mathbb{E}(Y^2)$, which we need to bound by a constant. We have

\[
\mathbb{E}(Y^2)  =  \sum_{\substack{1\leq i \leq k\\ 1 \leq j \leq k}} \mathbb{E}(X_iX_j)
\]
To evaluate this sum, we decompose it into three parts:

\[
\mathbb{E}(Y^2)  =  \sum_{i=1}^k  \mathbb{E}(X_i^2) \ + \ 2\sum_{\substack{1\leq i <j \leq k \\ I_i, I_j ~\text{adjacent}}} \mathbb{E}(X_iX_j)\  + \ 2\sum_{\substack{1\leq i <j \leq k \\ I_i, I_j ~\text{nonadjacent}}} \mathbb{E}(X_iX_j) 
\]

The first part is an easy computation. For all~$i$,
\begin{eqnarray*}
\mathbb{E}(X_i^2) & = & \mu(I_i)^2 \cdot 2^{n} \cdot \Pr{\{Z_\B \cap I_i \not= \emptyset\}}\\
& = & O\big(\mu(I_i)^2 \cdot 2^{n} \cdot 2^{-(n/2)}\big)\\
 & = & O\big(\mu(I_i) \cdot 2^{-\beta n} \cdot 2^{n} \cdot 2^{-(n/2)}\big)\\
 & = & \mu(I_i) \cdot O\big(2^{(1/2-\beta) n}\big)\\
 & = & \mu(I_i) \cdot O(1)
\end{eqnarray*}

(for the third equality, we use the fact that $\mu(I_i) \leq |I_i|^\beta$, and for the fifth one the fact that $\beta>1/2$). 
Thus
\[
\sum_{i=1}^k \mathbb{E}(X_i^2) = \sum_{i=1}^k \mu(I_i) \cdot O(1) = O(1)
\]
For the second part, we use a rough estimate: first notice that
\[
\mathbb{E}(X_iX_j) = \mu(I_i)\cdot\mu(I_j)\cdot 2^n \cdot \Pr{\{Z_\B \cap I_i \not= \emptyset \, \wedge \, Z_\B \cap I_j \not= \emptyset\}}
\]
and for the second part only, we will use the trivial upper bound:
\[
\Pr{\{Z_\B \cap I_i \not= \emptyset \, \wedge \, Z_\B \cap I_j \not= \emptyset\}} \leq \Pr{\{Z_\B \cap I_i \not= \emptyset \}} = O(2^{-n/2})
\]
Combining this with $\mu(I_j) \leq 2^{-\beta n}$, we get:
\[
\mathbb{E}(X_iX_j) = \mu(I_i)\cdot O(2^{(1/2-\beta)n}) = \mu(I_i) \cdot O(1)
\]
Moreover, each interval $I_i$ has at most two adjacent intervals $I_j$. Thus,
\[
\sum_{\substack{1\leq i <j \leq k \\ I_i, I_j ~\text{adjacent}}} \mathbb{E}(X_iX_j)\leq 2 \sum_{i=1}^k \mu(I_i) \cdot O(1) = O(1)
\]
Finally, for the third part, we will use the fact that the $1/2$-energy of $\mu$ is finite. Let us, for a pair of nonadjacent intervals $I_i,I_j$ with $\max(I_i)<\min(I_j)$, denote by $g(i,j)$ the length of the gap between the two, i.e., $g(i,j)= \min(I_j) - \max(I_i)$.
We have 
\begin{equation}
\sum_{\substack{1\leq i <j \leq k \\ I_i, I_j ~\text{nonadjacent}}} \mathbb{E}(X_iX_j)  = \sum_{\substack{1\leq i <j \leq k \\ I_i, I_j ~\text{nonadjacent}}} \mu(I_i)\cdot \mu(I_j) \cdot 2^n \cdot   \Pr{\{Z_\B \cap I_i \not= \emptyset \, \wedge \, Z_\B \cap I_j \not= \emptyset\}}
\end{equation}

 By Proposition~\ref{prop:hitting-prob-2}, 
\begin{equation}
 \Pr{\{Z_\B \cap I_i \not= \emptyset \, \wedge \, Z_\B \cap I_j \not= \emptyset\}} = \frac{2^{-n}\cdot O(1)}{\sqrt{g(i,j)}}
 \end{equation}
 (note that we use the fact that $I_i$ and $I_j$ are contained in $[1/2,1]$, hence $\min(I_i)$ is bounded away from $0$). 
 
Thus, 
\begin{equation}
\sum_{\substack{1\leq i <j \leq k \\ I_i, I_j ~\text{nonadjacent}}} \mathbb{E}(X_iX_j)  = \sum_{\substack{1\leq i <j \leq k \\ I_i, I_j ~\text{nonadjacent}}}\frac{ \mu(I_i)\cdot \mu(I_j)}{\sqrt{g(i,j)}} \cdot O(1)
\end{equation}
Note that, since $I_i$ and $I_j$ are non-adjacent dyadic intervals of length $2^{-n}$, we have $g(i,j)\geq2^{-n}$. Therefore, for two reals $x,y$, if $x \in I_i$ and $y \in I_j$, then $|y-x| \leq 3 g(i,j)$.  By this observation, we have 
\[
\sum_{\substack{1\leq i <j \leq k \\ I_i, I_j ~\text{nonadjacent}}}\frac{ \mu(I_i)\cdot \mu(I_j)}{\sqrt{g(i,j)}} \leq O(1) \cdot \int \int \frac{d\mu(x)d\mu(y)}{|x-y|^{1/2}} \leq O(1)
\]
(the last inequality comes from the fact that the $1/2$-energy of $\mu$ is finite by Lemma~\ref{lem:frostman2}). \\

We have thus established that $\mathbb{E}(Y^2)=O(1)$, which completes the proof.

\end{proof}

Let $KM$ denote the `a priori' Kolmogorov complexity function (see~\cite[Section 6.3.2]{DH}). Recall that $KM(\sigma) = K(\sigma) + O(\log |\sigma|)$, thus in particular~$K$ can be replaced by~$KM$ in the definition of effective dimension. The reason we need $KM$ instead of~$K$ is the following result of Reimann~\cite[Theorem 14]{Reimann2008}, which we will apply in the proof of Theorem~\ref{thm:dim-zero}: Let $z$ be a real such that $KM(z \uh n) \geq \beta n - O(1)$.  Then, there exists a measure~$\mu$ such that $\mu(A) = O(|A|^\beta)$ for all intervals~$A$, and such that $z$ is Martin-L\"of random for the measure~$\mu$.

\begin{proof}[Proof of Theorem~\ref{thm:dim-zero}]
Let $z$ be of dimension $\alpha>1/2$. Let~$\beta$ be a rational such that $1/2<\beta<\alpha$.  Then for almost all~$n$, $KM(z \uh n) \geq \beta n$.  By Reimann's theorem, let~$\mu$ be a measure such that $\mu(A) = O(|A|^\beta)$ for all intervals~$A$, and such that $z$ is Martin-L\"of random for the measure~$\mu$. 

For all~$n$, let $\mathcal{K}_n$ be the complement of the $n$-th level of the universal Martin-L\"of test over~$(C[0,1], \Pr)$ and consider the set
\[
\mathcal{U}_n = \{x \mid \forall B \in \mathcal{K}_n \; B(x) \not= 0\}
\]
We claim that $\mathcal{U}_n$ is $\Sigma^0_1$ uniformly in~$n$, and $\mu(\mathcal{U}_n) = O(2^{-n/2})$. 
To see that it is $\Sigma^0_1$ suppose that $x \in \mathcal{U}_n$, i.e., $B(x) \not=0$ for all~$B \in \mathcal{K}_n$. The set~$\mathcal{K}_n$ being compact (see Section~\ref{sec:background}), the value of $|B(x)|$ for $B \in \mathcal{K}_n$ reaches a positive minimum. Thus there is a rational~$a$ such that $B(x) > a$ for all~$B \in \mathcal{K}_n$. By uniform continuity of the members of~$\mathcal{K}_n$ (ensured by Proposition~\ref{prop:variation-sqrt-h-log-h}), there is a rational closed interval~$I$ containing~$x$ such that $|B(t)|>a/2$ for all $t \in I$ and $B \in \mathcal{K}_n$. Thus $\mathcal{U}_n$ is the union of intervals $(s_1,s_2)$ such that $\min \{B(t) : t \in [s_1,s_2]\} > b$ for some rational~$b$ and all $B \in \mathcal{K}_n$. Moreover, the condition ``$\min \{B(t) : t \in [s_1,s_2]\} > b$ for all $B \in \mathcal{K}_n$" is $\Sigma^0_1$, because the function $B \mapsto \min \{B(t):t \in [s_1,s_2]\}$ is layerwise computable (thus uniformly computable on~$\mathcal{K}_n$), and the minimum of a computable function on an effectively compact set is lower semi-computable uniformly in a code for that set. This shows that $\mathcal{U}_n$ is $\Sigma^0_1$. 

To evaluate $\mu(\mathcal{U}_n)$, let us first observe that by definition of~$\mathcal{U}_n$,
\[
\Pr_0 (Z_\B \cap \mathcal{U}_n) \leq \Pr_0 (\B \in \mathcal{K}_n~\text{and}~Z_\B \cap \mathcal{U}_n) +2^{-n} \leq 2^{-n}
\]
Applying Lemma~\ref{lem:capacity-and-hitting}, it follows that $\mu(\mathcal{U}_n) = O(2^{-n/2})$, as wanted. Since~$z$ is Martin-L\"of random with respect to~$\mu$, it cannot be in all sets $\mathcal{U}_n$, and thus it must be the zero of some Martin-L\"of random path. 

\end{proof}

\subsubsection{The case of points of effective dimension $1/2$}

In the previous section we showed that no point of effective dimension less than~$1/2$ can be the zero of a ML random path, and that every point of dimension greater than $1/2$ is necessarily a zero of some ML random path. This leaves open the question of what happens at effective dimension exactly~$1/2$. While we do not provide a full answer, we show that among points of effective dimension $1/2$, some are zeros of some ML random path, and some are not. 

The next theorem, which strengthens Theorem~\ref{thm:effdimhigh}, gives a necessary condition for a point to be a zero of some ML random path. 

\begin{theorem}\label{thm:brownian-ample-excess}
If $x>0$ is a zero of some ML random path, then
\[
\sum_n 2^{-K(x \uh n) + n/2} < \infty
\]
\end{theorem}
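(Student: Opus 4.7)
The plan is to build an ample-excess style integral test in the spirit of Miller--Yu, driven by the Brownian hitting-time estimate from Proposition~\ref{prop:hitting-prob-1}. For each finite binary string $\sigma$ I set $I_\sigma = [0.\sigma,\, 0.\sigma + 2^{-|\sigma|}]$, and let $\mathcal{E}_\sigma \subseteq C[0,1]$ be the $\Sigma^0_1$ event ``$B$ takes both a positive value and a negative value in $I_\sigma$''. Since by the intermediate value theorem $\mathcal{E}_\sigma$ is contained in the event that $B$ has a zero in $I_\sigma$, Proposition~\ref{prop:hitting-prob-1} yields the uniform bound $\Pr(\mathcal{E}_\sigma) \leq O(2^{-|\sigma|/2}/\sqrt{b})$ whenever $0.\sigma \geq b > 0$.

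For each rational $b > 0$ I then introduce the function
\[
t_b(B) \;=\; \sum_{\sigma \,:\, 0.\sigma \geq b} 2^{-K(\sigma) + |\sigma|/2}\,\indic{\mathcal{E}_\sigma}(B),
\]
which is lower semi-computable, since $\mathcal{E}_\sigma$ is $\Sigma^0_1$ uniformly in~$\sigma$ and $K$ is upper semi-computable. Integrating term by term and using Kraft's inequality $\sum_\sigma 2^{-K(\sigma)} \leq 1$, one gets $\int t_b\,d\Pr \leq O(1/\sqrt{b})$, finite. Thus $t_b$ is an integrable (Levin) test for Wiener measure, and therefore $t_b(B) < \infty$ for every \ML random path~$B$.

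To conclude, suppose $B$ is a \ML random path with $B(x) = 0$ for some $x > 0$. I pick a rational $b$ with $0 < b < x$; then $0.(x \uh n) \geq b$ for all sufficiently large~$n$, and since $x \in I_{x \uh n}$ is a zero of~$B$, Corollary~\ref{cor:zero-eq-crossing} forces $\mathcal{E}_{x \uh n}(B)$ to hold for all such~$n$. Keeping only those terms in $t_b(B)$ gives $\sum_n 2^{-K(x \uh n) + n/2} \leq t_b(B) + O(1) < \infty$, as desired. The only delicate point is the use of the $\Sigma^0_1$ event $\mathcal{E}_\sigma$ rather than the $\Pi^0_1$ event ``$B$ has a zero in $I_\sigma$'': this is what makes $t_b$ lower semi-computable, and it costs nothing in the probability bound thanks to the one-sided inclusion noted above.
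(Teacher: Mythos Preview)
Your proof is correct and follows essentially the same route as the paper's: define the $\Sigma^0_1$ events $\mathcal{E}_\sigma$ (positive and negative value in $I_\sigma$), build the lower semi-computable integrable test $\sum_{0.\sigma>b} 2^{-K(\sigma)+|\sigma|/2}\indic{\mathcal{E}_\sigma}$, bound its integral via Proposition~\ref{prop:hitting-prob-1} and Kraft's inequality, and then use Corollary~\ref{cor:zero-eq-crossing} (the paper cites its parent Proposition~\ref{prop:max-not-computable}) to see that a \ML random $B$ with $B(x)=0$ lies in $\mathcal{E}_{x\uh n}$ for almost all~$n$. Your write-up is in fact slightly more explicit than the paper's about why one needs the $\Sigma^0_1$ event rather than the $\Pi^0_1$ one.
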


It is interesting to notice the parallel with the so-called `ample excess lemma' (see~\cite[Theorem 6.6.1]{DH}): a real~$x$ is Martin-L\"of random if and only if $
\sum_n 2^{-K(x \uh n) + n} < \infty$.

\begin{proof}
The proof is an adaptation of that of Theorem~\ref{thm:effdimhigh}. First take a rational~$a$ such that $0<a$. We shall prove the lemma for all~$x>a$, which will be enough since~$a$ is arbitrary. 
For each string~$\sigma$ consider, like in Theorem~\ref{thm:effdimhigh}, the interval $I_\sigma=[0.\sigma, 0.\sigma+2^{-|\sigma|}]$ and the event \[
\mathcal{E}_\sigma: \left[ \B~\text{has a positive and a negative value in $I_\sigma$} \right]
\]

Now, consider the function~$\mathbf{t}$ defined on $C[0,1]$ by 
\[
\mathbf{t}(B) = \sum_{\sigma~ s.t.~ a<0.\sigma} 2^{-K(\sigma)+|\sigma|/2} \cdot \mathbf{1}_{\mathcal{E}_\sigma} (B)
\]
The event $\mathcal{E}_\sigma$ is a $\Sigma^0_1$ subset of $C [0,1]$, uniformly in~$\sigma$. Thus the function~$\mathbf{t}$ is lower semi-computable. Moreover, the probability of $\mathcal{E}_\sigma$ is $O(2^{-|\sigma|/2})$ by Proposition~\ref{prop:hitting-prob-1} (the multiplicative constant depending on~$a$). Thus the integral of~$\mathbf{t}$ is bounded, and therefore~$\mathbf{t}$ is an integrable test (see~\cite{Gacs-notes}). Let now~$B$ be a \ML random path and suppose~$B(x)=0$ for some~$x>a$. Then for almost all~$n$, $a<0.(x \uh n)$. Moreover, for every~$n$, $B$ having a zero in $I_{x \uh n}$, it must in fact have a positive and a negative value on that interval (by Proposition~\ref{prop:max-not-computable}). Thus, by definition of~$\mathbf{t}$
\[
\mathbf{t}(B) + O(1) \geq \sum_n 2^{-K(x \uh n)+n/2} 
\]
(the $O(1)$ accounts for the finitely many terms such that $a \geq 0.(x \uh n)$). But since $B$ is Martin-L\"of random and $\mathbf{t}$ is a integrable test, we have $\mathbf{t}(B) < \infty$, which proves our result. 

\end{proof} 

This theorem shows in particular that if~$x$ is the zero of some \ML random path, then $K(x \uh n)-n/2 \rightarrow +\infty$. 

We now give a sufficient condition which actually is very close to our necessary condition.

\begin{proposition}\label{prop:energy-refinement}
Let $f: \N \rightarrow \N$ be a function such that $\sum_n 2^{-f(n)} <\infty$. Let $\mu$ be a Borel measure on~$[0,1]$ such that for every interval~$A$ of length $\leq 2^{-n}$, $\mu(A) \leq 2^{-\alpha n - f(n)}$. Then~$\mu$ has finite $\alpha$-energy. 
\end{proposition}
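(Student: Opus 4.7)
The plan is to bound the $\alpha$-energy by dyadic decomposition of the diagonal, exactly as in the standard Frostman-type arguments, using the strengthened measure estimate to absorb the factor $2^{-f(n)}$ at each scale.

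First I would split the region of integration according to the distance $|x-y|$: for each $n\geq 0$ consider the annular set $A_n = \{(x,y) \in [0,1]^2 : 2^{-(n+1)} \leq |x-y| < 2^{-n}\}$ (and handle the set $\{x=y\}$ trivially, since it has $\mu\otimes\mu$-measure zero or can be absorbed into $A_0$). On $A_n$ the integrand $|x-y|^{-\alpha}$ is bounded by $2^{(n+1)\alpha}$, so
\[
\int\!\!\int \frac{d\mu(x)\,d\mu(y)}{|x-y|^\alpha} \leq \sum_{n\geq 0} 2^{(n+1)\alpha}\,(\mu\otimes\mu)\{(x,y): |x-y|<2^{-n}\}.
\]

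Next I would estimate $(\mu\otimes\mu)\{(x,y): |x-y|<2^{-n}\}$ using the hypothesis on $\mu$. Cover $[0,1]$ by the $2^n$ dyadic intervals $I^{(n)}_1,\dots,I^{(n)}_{2^n}$ of length $2^{-n}$. If $|x-y|<2^{-n}$ and $x\in I^{(n)}_k$, then $y$ must lie in $I^{(n)}_{k-1}\cup I^{(n)}_k\cup I^{(n)}_{k+1}$, a union of at most three intervals each of length $2^{-n}$, hence of $\mu$-measure at most $3\cdot 2^{-\alpha n - f(n)}$ by assumption. Summing over $k$ gives
\[
(\mu\otimes\mu)\{(x,y): |x-y|<2^{-n}\} \leq 3\cdot 2^{-\alpha n - f(n)}\cdot \mu([0,1]).
\]

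Plugging this back in:
\[
\int\!\!\int \frac{d\mu(x)\,d\mu(y)}{|x-y|^\alpha} \leq 3\cdot 2^\alpha \cdot \mu([0,1])\cdot \sum_{n\geq 0} 2^{-f(n)},
\]
which is finite by hypothesis on $f$. (Note $\mu([0,1])\leq 1 \cdot 2^{-f(0)} \cdot 2^0 <\infty$ from the case $n=0$ of the hypothesis.) There is no real obstacle here; the only thing to be slightly careful about is the ``adjacent interval'' combinatorial step, which costs only a harmless factor of $3$, and the fact that the hypothesis must be read uniformly (every interval of length at most $2^{-n}$, not only dyadic ones of length exactly $2^{-n}$), though only the dyadic case is used in the argument.
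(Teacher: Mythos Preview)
Your proof is correct and follows essentially the same dyadic annular decomposition as the paper: the paper fixes $x$, bounds $\int |x-y|^{-\alpha}\,d\mu(y)$ by splitting into shells $\{2^{-n}\le |x-y|<2^{-n+1}\}$ and using $\mu(\text{shell})\le 2^{-\alpha n-f(n)}$, then integrates over $x$; you perform the equivalent estimate directly in the product space with a three-adjacent-interval covering, picking up only a harmless extra constant.
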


\begin{proof}
For now, let us fix some~$x$. Define for all~$n$ the interval $I_n$ to be $[x-2^{-n+1},x-2^{-n}] \cap [0,1]$ and $J_n=[x+2^{-n},x+2^{-n+1}] \cap [0,1]$. Then
\begin{eqnarray*}
\int \frac{d\mu(y)}{|x-y|^\alpha} & \leq & \sum_n \int_{y \in I_n} \frac{d\mu(y)}{|x-y|^\alpha} + \sum_n \int_{y \in J_n} \frac{d\mu(y)}{|x-y|^\alpha} \\
 & \leq & \sum_n 2^{\alpha n}  \mu(I_n) + \sum_n 2^{\alpha n}  \mu(J_n)\\
 & \leq & \sum_n 2^{\alpha n} 2^{-\alpha n - f(n)}  + \sum_n 2^{\alpha n}  2^{-\alpha n - f(n)}\\
 & \leq & 2 \cdot \sum_n 2^{-f(n)}\\
 & < & \infty
 \end{eqnarray*}

Therefore, the $\mu$-integral over~$x$ of $\int \frac{d\mu(y)}{|x-y|^\alpha}$ is itself finite, which is what we wanted. 

\end{proof}

\begin{theorem}\label{thm:KM-zero}
Let $f: \N \rightarrow \N$ be a nondecreasing computable function such that $f(n+1) \leq f(n)+1$ for all~$n$, and such that $\sum_n 2^{-f(n)} < \infty$. Let $x$ be a real such that $KM(x \uh n) \geq n/2+f(n)+O(1)$. Then~$x$ is the zero of some \ML random path. 
\end{theorem}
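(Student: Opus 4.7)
The plan is to follow the template of Theorem~\ref{thm:dim-zero}, with two refinements. First, instead of a uniform mass bound of the form $\mu(A) = O(|A|^\beta)$, I want a finer bound $\mu(A) = O(2^{-n/2 - f(n)})$ for intervals $A$ of length $2^{-n}$. Second, I need to upgrade Lemma~\ref{lem:capacity-and-hitting} so that it requires only finite $1/2$-energy, rather than a strict mass condition with exponent $\beta > 1/2$. Combined with Proposition~\ref{prop:energy-refinement}, these two refinements are tailored exactly to handle effective dimension~$1/2$ when the excess $f$ satisfies $\sum_n 2^{-f(n)} < \infty$.

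The first step is to invoke the natural strengthening of Reimann's theorem (of the type quoted just before the statement of Theorem~\ref{thm:dim-zero}, but with the function $n/2 + f(n)$ in place of $\beta n$): from $KM(x \uh n) \geq n/2 + f(n) + O(1)$, one obtains a Borel measure $\mu$ on $[0,1]$ such that $x$ is Martin-L\"of random relative to~$\mu$ as oracle, and such that $\mu(A) \leq C \cdot 2^{-n/2 - f(n)}$ for every dyadic interval $A$ of length $2^{-n}$. The regularity hypotheses on $f$ (nondecreasing, $1$-Lipschitz, computable) are exactly what is needed for Reimann's construction to proceed for this more general gauge. Proposition~\ref{prop:energy-refinement} applied with $\alpha = 1/2$ then ensures that $\mu$ has finite $1/2$-energy $E_{1/2}(\mu)<\infty$.

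The central step, and the main obstacle, is to prove a finite-energy version of Lemma~\ref{lem:capacity-and-hitting}: for any finite Borel measure $\mu$ supported on $[1/2,1]$ with $E_{1/2}(\mu) < \infty$, and any $\mathcal{V} \subseteq [1/2,1]$ which is a countable union of dyadic intervals,
\[
\Pr_0\bigl(Z_\B \cap \mathcal{V} \neq \emptyset\bigr) \;\geq\; \frac{c \cdot \mu(\mathcal{V})^2}{E_{1/2}(\mu)}
\]
for an absolute constant $c>0$. The proof would follow the same Chebychev--Cantelli template as Lemma~\ref{lem:capacity-and-hitting}, but the three contributions to $\mathbb{E}(Y^2)$ now must all be dominated by $O(E_{1/2}(\mu))$. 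For the diagonal terms, one uses $\mu(I_i)^2 \cdot 2^{n/2} \leq \int_{I_i \times I_i} |u-v|^{-1/2}\, d\mu(u)\,d\mu(v)$ since $|u-v| \leq 2^{-n}$ on $I_i \times I_i$; the adjacent-pair terms are bounded the same way using $|u-v| \leq 2^{-n+1}$; and the non-adjacent pair estimate in the original proof already gives $O(E_{1/2}(\mu))$. This is where the original argument used the strict inequality $\beta > 1/2$, precisely at the diagonal contribution, and replacing that input by Proposition~\ref{prop:energy-refinement} is the technical heart of the theorem.

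The conclusion then mirrors Theorem~\ref{thm:dim-zero}: after the scaling of Subsection~\ref{subsec:scaling} we may assume $x \in [1/2,1]$. For each~$n$ define $\mathcal{U}_n = \{y \in [1/2,1] \mid \forall B \in \mathcal{K}_n \; B(y) \neq 0\}$; exactly as in the proof of Theorem~\ref{thm:dim-zero}, $\mathcal{U}_n$ is $\Sigma^0_1$ uniformly in~$n$, and $\Pr_0(Z_\B \cap \mathcal{U}_n) \leq 2^{-n}$. The refined lemma then gives $\mu(\mathcal{U}_n) = O(2^{-n/2})$, yielding a summable Solovay test for $\mu$-Martin-L\"of randomness (relative to~$\mu$). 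Since $x$ is $\mu$-random, some~$n$ witnesses $x \notin \mathcal{U}_n$, and any $B \in \mathcal{K}_n$ with $B(x)=0$ is the required Martin-L\"of random path.
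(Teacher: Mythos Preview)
Your proposal is correct and follows essentially the same route as the paper: invoke Reimann's theorem with the gauge $n/2+f(n)$ to obtain a measure~$\mu$ with the refined mass bound, apply Proposition~\ref{prop:energy-refinement} to get finite $1/2$-energy, and then rerun the argument of Theorem~\ref{thm:dim-zero}. In fact you are more careful than the paper, which simply writes ``the rest of the proof is identical to the proof of Theorem~\ref{thm:dim-zero}'' without noting that Lemma~\ref{lem:capacity-and-hitting} as stated assumes $\mu(I)\leq c|I|^\beta$ with $\beta>1/2$; your explicit bound $\mu(I_i)^2\cdot 2^{n/2}\leq \int_{I_i\times I_i}|u-v|^{-1/2}\,d\mu(u)\,d\mu(v)$ for the diagonal and adjacent terms is exactly the patch needed to make the second-moment estimate go through under finite $1/2$-energy alone.
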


\begin{proof}
Let $f$ be such a function and~$x$ such a real. By a result of Reimann~\cite[Theorem 14]{Reimann2008}, there exists a measure~$\mu$ such that $\mu(A) \leq 2^{-n/2-f(n)+O(1)}$ for all intervals of length $\leq 2^{-n}$ such that~$x$ is Martin-L\"of random with respect to~$\mu$. By Proposition~\ref{prop:energy-refinement}, $\mu$ has finite $1/2$-energy. The rest of the proof is identical to the proof of Theorem~\ref{thm:dim-zero}.
\end{proof}

\begin{theorem}\label{thm:exact-complexity}
Let $0<\alpha<1$ and let $f: \N \rightarrow \N$ be a Lipschitz function such that $f(n)=o(n)$. Then there exists $x \in [0,1]$ such that $K(x \uh n) = \alpha n  + f(n) +O(1)$.
\end{theorem}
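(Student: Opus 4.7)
Plan. The overall approach is to construct $x$ as the image of a Martin-L\"of random sequence $y \in 2^\omega$ under a computable bit-interleaving driven by a scheduling function $r : \N \to \N$. Write $g(n) = \alpha n + f(n)$. The two hypotheses on $f$ and $\alpha$ (Lipschitzness and $\alpha < 1$) together guarantee that a greedy $0/1$-increment tracker $r$ of $g$ exists, stays within a bounded distance $C_0$ of $g$, and has plateau lengths (maximal intervals of constancy) bounded by some constant $D$ depending only on $\alpha$ and the Lipschitz constant $L$ of $f$. Concretely, set $r(0) = \lfloor g(0)\rfloor$ and $r(n{+}1) = r(n)+1$ if $r(n) < g(n{+}1)$, else $r(n{+}1) = r(n)$; a short induction using $|g(n{+}1)-g(n)| \leq \alpha+L$ and the fact that $g$ grows at average rate $\alpha > 0$ yields both $|r(n) - g(n)| \le C_0$ and the bound on plateau length.

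Given $r$, let $y$ be Martin-L\"of random and define $x_n$ to be the next unused bit of $y$ whenever $r(n{+}1) > r(n)$, and $x_n = 0$ otherwise. Then $x \uh n$ is computable uniformly from $y \uh {r(n)}$, and conversely $y \uh {r(n)}$ is extractable from $x \uh n$ by reading the bits at those positions where $r$ jumps. The lower bound on $K$ is then immediate: by Levin--Schnorr, $K(y \uh m) \ge m - O(1)$, and since $y \uh {r(n)}$ is obtained from $x \uh n$ via a fixed computable procedure, $K(y \uh {r(n)}) \le K(x \uh n) + O(1)$; combining these gives $K(x \uh n) \ge r(n) - O(1) \ge g(n) - O(1)$.

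For the matching upper bound $K(x \uh n) \le g(n) + O(1)$, the challenge is to avoid the naive $+O(\log n)$ overhead of prefix-freely encoding the length $n$. The idea is to exploit the bounded plateau length: for each $n$, build a program $\pi_n$ of length $r(n) + O(1)$ consisting of an initial segment of $y$ of length $r(n)$ together with an $O(\log D) = O(1)$-bit tag pinpointing the position of $n$ within its plateau of $r$. One then designs a single prefix-free machine $M$ whose halting inputs are exactly the $\pi_n$'s, arranged as a prefix-free antichain (for instance by having each $\pi_n$ begin with a fixed self-marking preamble that synchronises $M$ to the correct plateau and end with the position tag). This gives $K(x \uh n) \le |\pi_n| + O(1) = r(n) + O(1) = g(n) + O(1)$.

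The main obstacle is precisely this last step: ensuring the $\pi_n$ can simultaneously (i) have length $g(n) + O(1)$, (ii) form a prefix-free family across all plateaus, and (iii) be decoded by a single universal prefix-free machine. The bounded plateau length coming from the Lipschitz hypothesis is essential here; without it the plateaus could grow unboundedly and force a $+\Omega(\log n)$ correction into the upper bound, destroying the $+O(1)$ conclusion. The scheduling step (Step~1) is a routine real-analytic induction and the lower bound (Step~2) is standard; all of the real work is in the prefix-free coding of the upper bound.
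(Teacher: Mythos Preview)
There is a genuine gap, and it lies deeper than the prefix-free encoding technicality you flag: your own lower-bound step already rules out the upper bound you are aiming for. You correctly observe that $y\uh r(n)$ is computably extractable from $x\uh n$ (granting that $r$ is computable), giving $K(y\uh r(n)) \le K(x\uh n)+O(1)$. But for \emph{any} Martin-L\"of random $y$, the ample excess lemma (cited in this very paper) gives $\sum_m 2^{m-K(y\uh m)}<\infty$, hence $K(y\uh m)-m\to\infty$. Combining, $K(x\uh n)-r(n)\to\infty$, so the target $K(x\uh n)\le r(n)+O(1)$ is simply false for the $x$ you build. Equivalently: if a prefix-free machine with halting inputs $\pi_n$ of length $r(n)+O(1)$ and outputs $x\uh n$ existed, composing with your extraction would yield $K(y\uh r(n))\le r(n)+O(1)$, contradicting ample excess. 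Dilution of a random sequence can hit a target complexity to within $O(\log n)$, but never to within $O(1)$ for prefix-free $K$; this is a hard obstruction, not an encoding nuisance.

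A second, independent problem: the bounded-plateau claim is false as stated. Take $\alpha=1/2$, $L=1$, and let $f$ have infinitely many triangular bumps of height $2^k$ placed near times $4^k$ (so $f(n)\le\sqrt{n}=o(n)$ and $f$ is $1$-Lipschitz). On the downslope of the $k$-th bump, $g$ decreases by $1/2$ per step for order $2^k$ steps, forcing $|r-g|$ and the resulting plateau length to be of order $2^k$; so $D$ cannot depend only on $\alpha$ and $L$. (Under the extra hypothesis $L<\alpha$ plateaus are bounded, but the ample-excess obstruction still applies.)

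The paper's proof avoids all of this by working directly with $K$ rather than importing complexity from a random source. It builds $x$ in blocks of a fixed large length $m$: given a prefix $\sigma$ already at the right complexity, appending a block $\tau$ with $K(\tau\mid\sigma)\ge m$ raises $K$ by roughly $m$, while appending $0^m$ raises it by at most $2\log m$; since consecutive interpolants $(\tau\uh i)0^{m-i}$ differ in one bit, their $K$-values differ by at most $2\log m+O(1)$, and an intermediate-value argument selects one landing within $O(\log m)$ of the target $\alpha n+f(n)$. As $m$ is a fixed constant this is $O(1)$, and the Lipschitz hypothesis carries the bound from multiples of $m$ to all $n$. The selection at each stage inspects values of $K$ and is non-effective, so no computability of $\alpha$ or $f$ is needed---an assumption your approach silently adds.
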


\begin{remark}
This theorem was proven by J. Miller (unpublished) in the particular case where~$f=0$. 
\end{remark}

\begin{proof}
Fix a `large enough' integer~$m$, which we will implicitly define during the construction. We will build the sequence~$x$ by blocks of length~$m$. For~$m$ large enough, the empty string has complexity less than $3 \log m$. Suppose we have already constructed a prefix~$\sigma$ of~$x$ such that $|K(\sigma \uh n) - \alpha n  + f(n)| \leq 3 \log m$ for all~$n \leq |\sigma|$ multiple of~$m$. Pick a string~$\tau$ of length~$n$ such that
\[
K(\tau\mid \sigma) \geq m
\]
We then have 
\[
K(\sigma \tau) \geq K(\sigma) + m - 2\log m - O(1)
\]
On the other hand
\[
K(\sigma 0^m) \leq K(\sigma) + 2\log m + O(1)
\]
For each $i \leq m$, consider the ``mixture" between $0^m$ and $\tau$: $\rho_i= (\tau \uh i)0^{m-i}$. Since $\rho_i$ and $\rho_{i+1}$ differ by only one bit in position $\leq m$ from the right, we have $|K(\sigma \rho_i) - K(\sigma\rho_{i+1})|\leq 2\log m +O(1)$. By this `continuity' property, there must be some~$i$ such that $|K(\sigma \rho_i) - \alpha n  - f(n)| \leq 2\log m +O(1)$ (here the $O(1)$ constant depends on~$f$, but not on~$m$). Thus, for~$m$  large enough, we get $|K(\sigma \rho_i) - \alpha n  - f(n)| \leq 3\log m$. Thus, if~$m$ is large enough, we can iterate this argument to build a sequence~$x$ such that   
$|K(x \uh n) - \alpha n  - f(n)| \leq 3 \log m$ for all~$n$ multiple of~$m$. Since $\alpha n + f(n)$ is a Lipschitz function, this is sufficient to ensure $|K(x \uh n) - \alpha n  - f(n)| =O(m)$. 
\end{proof}

We can finally prove the promised theorem.

\begin{theorem}
Among reals of effective dimension $1/2$, some are zeros of some \ML random path, and some are not.
\end{theorem}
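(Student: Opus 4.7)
The plan is to combine Theorem~\ref{thm:exact-complexity} with our two complementary bounds, Theorem~\ref{thm:brownian-ample-excess} on the necessary side and Theorem~\ref{thm:KM-zero} on the sufficient side, to exhibit two points of effective dimension~$1/2$ lying on opposite sides of the characterization. The two constructions are independent.

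For the negative direction, we apply Theorem~\ref{thm:exact-complexity} with $\alpha = 1/2$ and $f \equiv 0$ to obtain $y \in [0,1]$ with $K(y \uh n) = n/2 + O(1)$. Such a $y$ has $\cdim(y) = 1/2$, and is nonzero since $y=0$ would force $K(y \uh n) = O(1)$. Moreover, every term of $\sum_n 2^{-K(y \uh n) + n/2}$ is bounded below by a fixed positive constant, so the series diverges, and the contrapositive of Theorem~\ref{thm:brownian-ample-excess} forbids $y$ from being a zero of any \ML random path.

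For the positive direction, we choose a computable, nondecreasing, Lipschitz-$1$ function $f \colon \N \to \N$ with $f(n) = o(n)$, $\sum_n 2^{-f(n)} < \infty$, and $f(n)/\log n \to \infty$. A concrete choice is $f(n) = \lfloor \sqrt{n}\rfloor$: it is integer-valued and nondecreasing with $f(n+1)-f(n) \in \{0,1\}$, is $o(n)$, and grouping terms by the value of $\lfloor \sqrt{n} \rfloor$ gives $\sum_n 2^{-\lfloor \sqrt{n}\rfloor} = \sum_k (2k+1)\,2^{-k} < \infty$. Theorem~\ref{thm:exact-complexity} applied with $\alpha = 1/2$ and this $f$ produces $x$ with $K(x \uh n) = n/2 + f(n) + O(1)$, so $\cdim(x) = 1/2$. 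Using the standard inequality $K(\sigma) \leq KM(\sigma) + O(\log|\sigma|)$ we then obtain $KM(x \uh n) \geq n/2 + f(n) - O(\log n) + O(1)$.

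It remains to feed this lower bound into Theorem~\ref{thm:KM-zero}. We select an auxiliary function $g$ that absorbs the logarithmic loss, for instance $g(n) = \lfloor \sqrt{n}/2 \rfloor$: it satisfies all the structural hypotheses of Theorem~\ref{thm:KM-zero} (integer-valued, nondecreasing, Lipschitz with constant $1$, and $\sum_n 2^{-g(n)} < \infty$ by the same grouping argument), and for every fixed $C$ we have $g(n) \leq f(n) - C\log n$ for all sufficiently large $n$ since $\lfloor\sqrt n\rfloor - \lfloor\sqrt n/2\rfloor$ grows like $\sqrt n /2$. Therefore $KM(x \uh n) \geq n/2 + g(n) + O(1)$, and Theorem~\ref{thm:KM-zero} yields an \ML random path having $x$ as a zero. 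The only delicate point of the argument is the $O(\log n)$ bookkeeping between $K$ and $KM$; by picking $f$ to grow sufficiently faster than $\log n$, a valid $g$ meeting the hypotheses of Theorem~\ref{thm:KM-zero} survives the loss.
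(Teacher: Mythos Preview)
Your proof is correct and follows essentially the same strategy as the paper: use Theorem~\ref{thm:exact-complexity} twice, once with $f\equiv 0$ for the negative example (ruled out via Theorem~\ref{thm:brownian-ample-excess}) and once with a slowly growing~$f$ for the positive example (handled via Theorem~\ref{thm:KM-zero} after absorbing the $O(\log n)$ gap between $K$ and $KM$). The only cosmetic difference is your choice $f(n)=\lfloor\sqrt n\rfloor$ versus the paper's logarithmic choice; yours has the mild advantage that the Lipschitz-$1$ and summability hypotheses of Theorem~\ref{thm:KM-zero} are transparently satisfied.
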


\begin{proof}
By Theorem~\ref{thm:exact-complexity}, first consider a real~$x$ such that $K(x \uh n)=n/2+O(1)$. This real has effective dimension $1/2$ and cannot be a zero of a \ML random path (Theorem~\ref{thm:brownian-ample-excess}). 

Applying Theorem~\ref{thm:exact-complexity} again, let $y$ be a real such that $K(y \uh n) = n + 4\log n +O(1)$. Since for every~$\sigma$, $KM(\sigma) \geq K(\sigma) - K(|\sigma|) - O(1) \geq K(\sigma) - 2\log |\sigma| - O(1)$, it follows that $KM(y \uh n) \geq n+2\log n - O(1)$, and thus $y$ is a zero of some \ML random path (Theorem~\ref{thm:KM-zero}). Of course, $y$ has effective dimension $1/2$ as well. 

\end{proof}

This section leaves open the existence of a precise characterization of the reals~$x$ of dimension $1/2$ for which there exists a \ML random path~$B$ such that $B(x)=0$. Short of an exact characterization, it would be interesting to know whether this depends on Kolmogorov complexity alone. By this, we mean the following question.

\begin{question}
 If $K(x \uh n) \leq K(y \uh n) +O(1)$ and $x$ is a zero of some \ML random path, is~$y$ a zero of some \ML random path? Same question with~$KM$ instead of~$K$. 
\end{question}


\section{Planar Brownian Motion}
\subsection{Brownian motion in higher dimensions}
So far we have talked about Brownian motion on $C[0, 1]$ or $C[\R^{\geq 0}]$, but it is also possible to define Brownian motion in higher dimensions. 

  \begin{definition}

If $\B_1, ..., \B_d$ are independent linear Brownian motions started in $x_1, ..., x_d$, then the process $\{\B(t): t \geq 0\}$ given by $\B(t) = (\B_1(t), ..., \B_d(t))$ is \emph{d-dimensional Brownian motion} started in $(x_1, ..., x_d)$. The d-dimensional Brownian motion started at the origin is also called \emph{standard Brownian motion}. One-dimensional Brownian motion is also called \emph{linear}, and two-dimensional Brownian motion is also called \emph{planar Brownian motion}. 
\end{definition}

And similarly, we have 

\begin{theorem}
A function $B(t) = (B_1(t), ..., B_d(t))$ in the space of continuous functions from $[0, \infty)$ to $\R^d$ with Wiener measure is a \ML random path if and only if $B_1(t), ..., B_d(t)$ are mutually \ML random linear Brownian motion.
\end{theorem}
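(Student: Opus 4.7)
The plan is to recast the statement as an instance of van Lambalgen's theorem for products of computable probability spaces. The starting observation is that, by the very definition of $d$-dimensional Brownian motion as a tuple of $d$ independent linear Brownian motions, the pushforward of $d$-dimensional Wiener measure under the coordinate map $B \mapsto (B_1,\ldots,B_d)$ is exactly the $d$-fold product $\Pr^d$ of one-dimensional Wiener measure. So first I would argue that the space $(C([0,\infty),\R^d), W_d)$ is computably isomorphic, as a computable probability space, to the product $\prod_{i=1}^d (C([0,\infty),\R), \Pr)$: the coordinate projection and its inverse (gluing $d$ paths into a vector-valued path) are both computable with respect to the sup-norm metrics, and both are measure-preserving. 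Computability of the product space itself follows from the already established computability of $(C([0,\infty),\R),\Pr)$.

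Next, I would invoke van Lambalgen's theorem in the form valid for computable probability spaces (as in Bienvenu--Hoyrup--Rojas and related work, or as can be derived directly from Theorem~\ref{thm:layerwise-computability}): on a product $\mathbb{X}_1 \times \cdots \times \mathbb{X}_d$ of computable probability spaces with product measure $\mu_1 \otimes \cdots \otimes \mu_d$, a tuple $(x_1,\ldots,x_d)$ is \ML random iff each $x_i$ is $\mu_i$-\ML random relative to $(x_1,\ldots,x_{i-1},x_{i+1},\ldots,x_d)$; this is precisely what ``mutually \ML random'' means. Combined with the isomorphism from the first step, this immediately yields both directions of the theorem: $B$ is \ML random for $d$-dimensional Wiener measure iff $(B_1,\ldots,B_d)$ is \ML random for $\Pr^d$, iff the $B_i$ are mutually \ML random linear Brownian motions.

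I do not foresee a genuine obstacle; the one thing that requires some care is to verify that the coordinate projection and its inverse really are computable as maps of computable metric spaces, i.e.\ that $\|B\|_\infty$ and $\max_i \|B_i\|_\infty$ are equivalent up to a computable constant (they are equal up to a factor of $\sqrt{d}$ depending on which norm one puts on $\R^d$) and that piecewise-linear rational-coordinate functions in the product are dense and effectively so in the vector-valued space. Once this bookkeeping is done, the proof reduces to citing the two general facts above: pushforward under a computable measure-preserving isomorphism preserves \ML randomness in both directions (Theorem~\ref{thm:layerwise-computability}), and van Lambalgen's theorem on product spaces.
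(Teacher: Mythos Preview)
Your proposal is correct and takes essentially the same approach as the paper: the paper's proof is a one-line invocation of van Lambalgen's theorem for products of computable probability spaces, and you are doing exactly that, together with the (implicit in the paper) identification of $(C([0,\infty),\R^d),W_d)$ with the $d$-fold product of $(C([0,\infty),\R),\Pr)$. Your additional remarks about the computability of the coordinate maps and the equivalence of norms simply make explicit the bookkeeping the paper leaves to the reader.
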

\begin{proof}
This follows immediately from Van Lambalgen's theorem which states that given a computable probability space~$(\mathbb{X},\mu)$, a pair $(A, B)$ is a \ML random element of the product space $(\mathbb{X}, \mu) \times (\mathbb{X}, \mu)$ if and only if $A$ and $B$ are mutually \ML random elements of $(\mathbb{X}, \mu)$. 
\end{proof}

\begin{theorem}\label{thm:B-hits-derandomizing-points}
At any time $t > 0$, for $B(t)$ a planar \ML random path started at (0,0), $B$ is not random relative to any point $(B_x(t), B_y(t))$ on the path, other than the origin.
\end{theorem}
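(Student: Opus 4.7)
The plan is to exhibit an effective Solovay test, relativized to the oracle $p := (B_x(t), B_y(t))$, that the path $B$ fails. The key observation is that $B(t)$ has a bounded Gaussian density on $\R^2$, so the set of paths that are very close to $p$ at time $t$ is thin in a quantitative, effective way.

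Concretely, fix $t > 0$ (assumed computable, or else treated as part of the oracle) and, for each $n$, define
\[
U_n^p := \{B' \in C([0,\infty), \R^2) : \|B'(t) - p\| < 2^{-n}\}.
\]
This is uniformly $\Sigma^{0,p}_1$ since the predicate $\|B'(t) - p\| < 2^{-n}$ is basic open in the sup-norm topology on the path space once $p$ and $t$ are available. Using the explicit $2$D Gaussian density of $B'(t)$,
\[
\Pr(U_n^p) \;=\; \int_{\|q-p\|<2^{-n}} \frac{1}{2\pi t}\,e^{-\|q\|^2/(2t)}\,dq \;\leq\; \frac{2^{-2n}}{2t},
\]
which is summable in $n$. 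Hence $\{U_n^p\}_n$ is a $p$-relative Solovay test, and the standard (oracle-uniform) conversion produces a $p$-Martin-L\"of test $\{V_k^p\}_k$ with $\bigcap_n U_n^p \subseteq \bigcap_k V_k^p$. Since $B(t) = p$ exactly, $B \in U_n^p$ for every $n$, so $B$ fails the $p$-ML test and is not \ML random relative to $p$.

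The exclusion ``other than the origin" is handled automatically. If it were the case that $B(t) = (0,0)$ for some $t > 0$, then the same construction with $p = 0$ would give an \emph{unrelativized} Martin-L\"of test that $B$ fails, contradicting the assumption that $B$ is \ML random. This is the effective shadow of the classical fact that planar Brownian motion is polar: it almost surely does not return to any fixed point, in particular not to the origin.

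The main point requiring care is uniformity: one must verify that $U_n^p$ is uniformly $\Sigma^{0,p}_1$ in $(n,p)$ and that the Solovay-to-ML conversion relativizes cleanly to the oracle $p$; both are routine. Beyond that, the argument is essentially a one-liner, and it reveals that the theorem is not really special to planar Brownian motion: the pointwise evaluation map $B \mapsto B(t)$ pushes Wiener measure forward to a measure with bounded density near $p$ in any fixed dimension, and this bounded density is all that is needed to derandomize $B$ once the exact value $B(t)$ is revealed.
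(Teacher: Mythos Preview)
Your argument has a genuine gap: the test $U_n^p = \{B' : \|B'(t) - p\| < 2^{-n}\}$ depends on the time $t$, not only on the point $p = B(t)$. You flag this with the parenthetical ``assumed computable, or else treated as part of the oracle'', but neither option is available. The theorem quantifies over \emph{all} $t>0$, so $t$ need not be computable; and the oracle in the statement is the point $p$ alone---this is precisely what the subsequent corollaries require (e.g., the zero-area corollary invokes the fact that the set of points $p$ which derandomize $B$ has planar Lebesgue measure zero, a statement about $p$ with no $t$ in sight). Thus your sets are only $\Sigma^{0,(p,t)}_1$ rather than $\Sigma^{0,p}_1$, and what you have actually proved is the weaker assertion that $B$ is not random relative to the pair $(t,B(t))$.

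The paper's proof sidesteps this by replacing the ``value at the specific time $t$'' event with a hitting event that makes no reference to $t$: it bounds the probability that planar Brownian motion enters the $\epsilon$-ball about $p$ before a fixed rational horizon $T$. For any $t>0$ there is a rational $T>t$, and the resulting test is $\Sigma^{0,p}_1$. The price is that the relevant measure is now a small-ball hitting probability, which for planar Brownian motion decays only like $1/\log(1/\epsilon)$ rather than your clean $\epsilon^2$; the paper therefore invokes explicit Bessel-process formulas (Hamana--Matsumoto) for computability and Spitzer's estimate for the decay, obtaining a Schnorr test relative to $p$. Your density bound is sharper and simpler where it applies, but to prove the theorem as stated---and to support the corollaries drawn from it---one must pass to the hitting formulation, and then the harder analytic input is unavoidable.
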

\begin{proof}
For $B(t)$ a standard planar Brownian motion, the probability that $B(t)$ hits an $\epsilon$-ball around a point $(x, y) \not = (0, 0)$, for $\epsilon < |x^2 + y^2|$ is equivalent to the probability that a planar Brownian motion started at radius $ |x^2 + y^2| = R$ hits an $\epsilon$-ball around zero, by radial symmetry of the planar Brownian motion. The radial part of $d$-dimensional Brownian motion is the Bessel process of order $\nu$ where $d = 2\nu + 2$, and is well understood. In the planar case we are concerned with the Bessel process of order zero. 

Let $\tau_{R, \epsilon}$ be the first hitting time of the Bessel process of order zero started at $R$, hitting to $\epsilon$. Using a result of Haman and Matsumoto \cite{HamanaMatsumoto}, we know that $\Pr(\tau_{R, \epsilon} \leq 1)$ is equal to~

\[
 \int_0^1 \frac{R - \epsilon}{\sqrt{2\pi s^3}} e^{-\frac{(R-\epsilon)^2}{2s}} ds - \int_0^1 \frac{R - \epsilon}{\sqrt{2\pi s^3}} e^{-\frac{(R-\epsilon)^2}{2s} }\left[ \int_0^{\infty} \frac{L_{0, R/\epsilon}(x)}{x} e^{-\frac{x(R- \epsilon)}{\epsilon\sqrt{s}}} dx \right] ds 
 \]

where

$$
L_{0, R/\epsilon}(x) = \frac{I_0(Rx/\epsilon)K_0(x) - I_0(x)K_0(R x/\epsilon)}{(K_0(x))^2 + \pi^2(I_0(x))^2}
$$
and 

$$
I_0(x) = \frac{1}{\pi}\int_0^{\pi} e^{x \cos{t}} dt,
$$

$$
K_0(x) = \int_0^{\infty} \frac{\cos{(xt)}}{\sqrt{t^2 + 1}} dt
$$

These functions are computable, because all the component pieces - cosine, square root, exponentiation, multiplication, and division - are computable, and the integral of a computable function is computable. See the book by Weihrauch \cite{Weihrauch2000} for more details.  Moreover, this integral goes to zero as $\epsilon$ goes to zero, which is more easily seen using a classical result of Spitzer~\cite{Spitzer}:
$$
\lim_{\epsilon \to 0} \log\left(\frac{1}{\epsilon}\right)Pr(\tau_{R, \epsilon} \leq 1) = \int_{R^2/2}^{\infty} \frac{e^{-x}}{2x} dx.
$$
As the right hand side is a constant, and $log(\frac{1}{\epsilon}) \to \infty$, we know that $Pr(\tau_{R, \epsilon} \leq 1) \to 0$. 

Thus we have a Schnorr test relative to the point $(x, y)$, so a \ML random path $B(t)$ will only pass through points $(x, y)$ such that the path $B$ (or a code for the path) is not random relative to $(x, y)$, before time 1. The argument is the same for any finite time, not just time 1, so the statement of the theorem holds. 
\end{proof}

\begin{corollary}
For~$B$ a \ML random planar path, the graph of~$B$ has zero area. 
\end{corollary}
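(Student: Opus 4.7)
I interpret the ``graph" of $B$ as its trace in the plane, $\mathrm{Im}(B) = \{B(t) : t \geq 0\} \subset \R^2$, which is the natural object to measure by area (the classical statement is that planar Brownian motion has zero-area range). The plan is to show that every point of $\mathrm{Im}(B)$ except possibly the origin fails to be Martin-L\"of random with respect to Lebesgue measure relative to~$B$, and then to invoke the standard fact that the set of such non-random points forms a Lebesgue null set.

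First, I would fix an \ML random planar path $B$ and pick any $z \in \mathrm{Im}(B) \setminus \{(0,0)\}$, so that $z = B(t)$ for some $t > 0$. Theorem~\ref{thm:B-hits-derandomizing-points} immediately gives that $B$ is not \ML random relative to~$z$. To convert this into a statement about $z$ relative to~$B$, I would apply van Lambalgen's theorem in the computable product probability space pairing $(C[0,\infty), \mathrm{Wiener})$ with $([-M,M]^2,$ normalized Lebesgue$)$: the pair $(B,z)$ is jointly \ML random iff $B$ is \ML random and $z$ is \ML random relative to~$B$, iff $z$ is \ML random and $B$ is \ML random relative to~$z$. The failure of the latter condition forces $(B,z)$ not to be jointly random, and since $B$ is \ML random, the former condition then yields that $z$ is not \ML random relative to~$B$.

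The conclusion follows from the standard observation that for any oracle $A$, the set of points in $[-M,M]^2$ which are not \ML random for Lebesgue measure relative to~$A$ is itself Lebesgue null (the relativized universal \ML test catches all such points and has measure tending to $0$). Applied with $A = B$ and unioned over $M \in \N$, this shows $\mathrm{Im}(B) \setminus \{(0,0)\}$ is contained in a Lebesgue null subset of~$\R^2$; adjoining the origin does not change this, so the graph has zero area.

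I do not anticipate a serious obstacle. The only mildly technical point is checking van Lambalgen's theorem in the required product of computable probability spaces (one of which is not Cantor space), but this is routine within the Hoyrup--Rojas framework already used elsewhere in the paper, and the infinite total mass of Lebesgue measure on~$\R^2$ is dealt with by the exhaustion over boxes $[-M,M]^2$.
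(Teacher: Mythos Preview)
Your proof is correct and follows essentially the same route as the paper. The paper's one-line justification (``only Lebesgue measure zero many points derandomize any particular real'') is precisely the fact you derive via van Lambalgen; you have simply made that step explicit and handled the minor technicalities of working in the product probability space and exhausting~$\R^2$ by boxes.
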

\begin{proof}
Only Lebesgue measure zero many points derandomize any particular real, so any \ML random path hits only Lebesgue measure zero many points.
\end{proof}

\begin{corollary}
For any point $(x, y) \not = (0, 0)$, only measure zero many Brownian paths hit $(x, y)$ (Almost surely, Brownian motion does not hit a given point). 
\end{corollary}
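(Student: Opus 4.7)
The plan is to derive the corollary directly from Theorem~\ref{thm:B-hits-derandomizing-points} by a contrapositive/full-measure argument. Fix $(x,y) \neq (0,0)$ and, viewing the pair as an oracle (any real coding it), let $R$ denote the event that $B$ is \ML random and let $R^{(x,y)}$ denote the event that $B$ is \ML random relative to $(x,y)$. Each of these events has Wiener measure one (the first by the definition of \ML randomness on a computable probability space, the second by the same fact after relativization to the oracle $(x,y)$). Hence the intersection $A := R \cap R^{(x,y)}$ has Wiener measure one.

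Next I would apply the contrapositive of Theorem~\ref{thm:B-hits-derandomizing-points}: if $B \in R$ passes through $(x,y)$ at some time $t > 0$, then $B$ must fail to be \ML random relative to $(x,y)$, so $B \notin R^{(x,y)}$. Equivalently, no $B \in A$ satisfies $B(t) = (x,y)$ for any $t > 0$. Let $H := \{B : \exists t > 0,\, B(t) = (x,y)\}$. Then $H \subseteq A^c$, and $\Pr(A^c) = 0$ gives $\Pr(H) = 0$, which is the statement of the corollary.

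The only technical point to check is the measurability of $H$, which is classical for continuous processes: by continuity of sample paths, $H$ coincides with $\{B : \inf_{t \geq 0} |B(t) - (x,y)| = 0\}$, and this infimum is a measurable function of $B$ since it equals $\inf_{q \in \mathbb{Q}_{\geq 0}} |B(q) - (x,y)|$, a countable infimum of continuous (hence measurable) functionals of $B$.

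There is no real obstacle here; all the substantive work has already been carried out in Theorem~\ref{thm:B-hits-derandomizing-points}, where the effective estimate on the Bessel hitting probability was used to show that hitting a fixed nonzero point derandomizes the path. The two minor points worth stating explicitly in the write-up are (a) that $R^{(x,y)}$ is well-defined regardless of whether $(x,y)$ is computable (one simply relativizes the universal \ML test to any oracle coding $(x,y)$), and (b) the measurability argument for $H$ above. The final sentence of the proof would simply observe that this recovers the classical almost-sure statement without any separate classical input beyond what already appears in Theorem~\ref{thm:B-hits-derandomizing-points}.
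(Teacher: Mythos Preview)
Your argument is correct and is essentially the paper's own proof spelled out in detail: the paper's one-line proof ``A real derandomizes only Lebesgue measure zero many reals'' is exactly the statement that $R^{(x,y)}$ has full Wiener measure, combined with the contrapositive of Theorem~\ref{thm:B-hits-derandomizing-points} just as you do. Your explicit measurability check for $H$ and the remark that relativized randomness makes sense for arbitrary (not necessarily computable) $(x,y)$ are useful additions, but the approach is the same.
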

\begin{proof}
A real derandomizes only Lebesgue measure zero many reals.
\end{proof}

\begin{corollary}\label{cor:no-computable-pts}
At any time $t > 0$, for $B(t)$ a standard planar \ML random Brownian motion, $B$ does not pass through any computable point. 
\end{corollary}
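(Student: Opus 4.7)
The plan is to argue by contradiction: assume $B(t_0) = p$ for some $t_0 > 0$ and some computable $p \in \R^2$, and derive a contradiction by reduction to Theorem~\ref{thm:B-hits-derandomizing-points}.

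For the main case $p \neq (0,0)$, the argument is essentially immediate. Because $p$ is computable, \ML randomness relative to $p$ coincides with plain \ML randomness, so $B$ is in particular \ML random relative to $p$; but Theorem~\ref{thm:B-hits-derandomizing-points} asserts that $B$ cannot be random relative to any non-origin point it visits. Contradiction.

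The one subtlety is the case $p = (0,0)$, i.e.\ showing that a \ML random planar path does not return to the origin at any positive time. Here I would fix a positive rational $q < t_0$ and apply the constructive strong Markov property of Subsection~2.2 at the constant stopping time $T \equiv q$: the shifted process $\widehat{B}(t) = B(t+q) - B(q)$ is then \ML random relative to $B \uh [0,q]$, and in particular relative to the point $B(q) \in \R^2$. Writing $B = (B_1, B_2)$, where $B_1, B_2$ are mutually \ML random linear Brownian motions, Proposition~\ref{prop:no-computable-zero} applied to each coordinate gives $B_i(q) \neq 0$, hence $B(q) \neq (0,0)$. If $B(t_0) = (0,0)$, then $\widehat{B}(t_0 - q) = -B(q) \neq (0,0)$, and since $-B(q)$ is computable from $B(q)$, the relativization to the oracle $B(q)$ of the first case yields the desired contradiction.

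The only real obstacle is precisely the $p = (0,0)$ case, since Theorem~\ref{thm:B-hits-derandomizing-points} expressly excludes the starting point; the strong Markov reduction described above handles it cleanly, and the rest of the argument is bookkeeping with oracles.
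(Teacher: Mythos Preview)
For $p \neq (0,0)$ your argument is exactly the paper's one-line proof: a \ML random path is random relative to any computable oracle, so Theorem~\ref{thm:B-hits-derandomizing-points} applies directly.

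You are more careful than the paper in also treating $p = (0,0)$. The paper's proof reads simply ``A \ML random path is always random relative to a computable point'' and invokes Theorem~\ref{thm:B-hits-derandomizing-points}, which strictly speaking excludes the origin and so leaves the return-to-origin case unaddressed. Your reduction via the constructive strong Markov property at a rational $q \in (0,t_0)$ is correct and fills this small gap: $\widehat{B}$ is \ML random relative to $B(q)$; the point $-B(q)$ is nonzero (Proposition~\ref{prop:no-computable-zero} on either coordinate) and $B(q)$-computable; and $\widehat{B}(t_0 - q) = -B(q)$, so the relativization of Theorem~\ref{thm:B-hits-derandomizing-points} to the oracle $B(q)$ gives the contradiction. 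This is a genuine, if minor, improvement in rigor over the paper's terse argument.
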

\begin{proof}
A \ML random path is always random relative to a computable point. 
\end{proof}


\subsection{Dirichlet Problem}

The Dirichlet problem asks the following question: given a domain (i.e., connected open set)~$U \subseteq \R^n$ and a function $\phi$ defined everywhere on the boundary $\partial U$ of $U$, is there a unique, continuous function $u$ such that $u$ is harmonic on the interior of $U$ and $u = \phi$ on $\partial U$? The Dirichlet problem arises whenever one considers notions of potential - for example, the problem may be thought of as finding the temperature of the interior of a heat-conducting region for which the temperature on the boundary is known, or alternatively, finding the electric potential on the interior of a region for which the charge on the boundary is known. 

These physical interpretations of the problem make it clear that there should be a unique solution, and indeed, many ways of finding this unique solution are known. One method of solving the Dirichlet problem which arises from an intuition of heat diffusion in a heat-conducting substance uses the mathematical model of Brownian motion \cite{Kakutani}.

\begin{definition}
Let $U \subset \R^d$ be a domain. We say that $U$ satisfies the Poincar\'{e} cone condition at $x \in \partial U$ if there exists a cone $V$ based at $x$ with opening angle $\alpha > 0$ and $h > 0$ such that $V \cap \B(x, h) \subset U^c$, where $\B(x, h)$ denotes an open ball around $x$ of radius $h$. 
\end{definition}

\begin{theorem}[Kakutani]\label{Kakutani}
Suppose $U \subset \mathbb{R}^d$ is a bounded domain such that every boundary point satisfies the Poincar\'{e} cone condition, and suppose $\phi$ is a continuous function on $\partial U$. Let $\tau(\partial U) = \inf\{t > 0: B(t) \in \partial U\}$, which is an almost surely finite stopping time. Then the function $u:\overline{U} \to \mathbb{R}$ given by 
$$
u(x) = \mathbb{E}_x\left[\phi(B(\tau(\partial U)))\right], \,\,\,\,\,\,\,\, for\,\, x \in \overline{U},
$$
is the unique continuous function harmonic on $U$ with $u(x) = \phi(x)$ for all $x \in \partial U$.
\end{theorem}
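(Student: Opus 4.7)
The plan is to verify three properties of the candidate function $u(x) = \mathbb{E}_x[\phi(B(\tau(\partial U)))]$: (a) $u$ is harmonic on $U$, (b) $u$ extends continuously to $\overline{U}$ with $u|_{\partial U} = \phi$, and (c) uniqueness.

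For (a), I would use the strong Markov property. Fix $x \in U$ and choose $r>0$ small enough that $\overline{B(x,r)} \subset U$, and let $\sigma = \inf\{t>0 : B(t) \notin B(x,r)\}$, which is a finite stopping time. Conditioning on $\F_\sigma$ and using the strong Markov property, together with the fact that the Brownian motion restarted at $B(\sigma)$ still hits $\partial U$ at the same distribution as a fresh Brownian motion from $B(\sigma)$, we get
\[
u(x) = \mathbb{E}_x\bigl[\mathbb{E}_{B(\sigma)}[\phi(B(\tau(\partial U)))]\bigr] = \mathbb{E}_x[u(B(\sigma))].
\]
By the rotational symmetry of planar Brownian motion, $B(\sigma)$ is uniformly distributed on $\partial B(x,r)$, so $u(x)$ equals the average of $u$ over $\partial B(x,r)$. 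Since $r$ was arbitrary, $u$ satisfies the mean value property on $U$, hence is harmonic.

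For (b), this is the main obstacle, and it is where the Poincar\'{e} cone condition is used. Given $x \in \partial U$ and a sequence $x_n \in \overline{U}$ with $x_n \to x$, I want $u(x_n) \to \phi(x)$. The cone $V$ based at $x$ of opening angle $\alpha$ and radius $h$ lies entirely in $U^c$, so a Brownian motion started near $x$ has a uniformly positive probability (depending only on $\alpha$) of hitting $V$ within any fixed short time window, by the scaling property and the fact that planar Brownian motion visits every open cone with positive probability. Iterating this argument on dyadically shrinking time intervals yields $\Pr_{x_n}(\tau(\partial U) > t) \to 0$ as $x_n \to x$, for every $t>0$. Combined with the almost-sure continuity of sample paths and the continuity of $\phi$, one obtains $B(\tau(\partial U)) \to x$ in probability under $\Pr_{x_n}$, and then $u(x_n) = \mathbb{E}_{x_n}[\phi(B(\tau(\partial U)))] \to \phi(x)$ by bounded convergence (using that $\phi$ is bounded on the compact set $\partial U$). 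Continuity of $u$ on the interior follows from the harmonicity already established.

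For (c), if $u_1, u_2$ are both continuous on $\overline{U}$, harmonic on $U$, and agree with $\phi$ on $\partial U$, then $v = u_1 - u_2$ is harmonic on $U$ and vanishes on $\partial U$. By the maximum principle for harmonic functions on bounded domains, $v \equiv 0$. Thus the function defined by the Brownian formula is the unique solution. The hard step is really (b): the Poincar\'{e} cone condition must be leveraged carefully to rule out boundary points at which the Brownian motion lingers inside $U$, and all the other steps are essentially standard consequences of the strong Markov property and rotational invariance.
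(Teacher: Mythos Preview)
The paper does not prove this statement at all: Kakutani's theorem is quoted as a classical result with a citation to~\cite{Kakutani}, and the authors use it as a black box in their proof of the computable Dirichlet problem (Theorem~\ref{thm:CDP}). So there is no ``paper's own proof'' to compare against.

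Your outline is the standard textbook argument (essentially what one finds in M\"orters--Peres~\cite{MP}, the paper's main classical reference): harmonicity from the strong Markov property and the spherical mean value property, boundary regularity from the Poincar\'e cone condition, and uniqueness from the maximum principle. The sketch is correct in spirit. One small slip: you write ``rotational symmetry of planar Brownian motion'' in step~(a), but the theorem is stated in $\R^d$; the argument of course works identically using rotational invariance of $d$-dimensional Brownian motion. In step~(b) your iteration argument is the right idea but somewhat compressed; the usual presentation makes explicit that the probability of \emph{not} exiting through the cone in one dyadic step is bounded by some $q<1$ depending only on the opening angle, and then one multiplies over scales to get the exponential decay of $\Pr_{x_n}(\tau(\partial U)>t)$ as $x_n\to x$.
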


By relativizing Corollary~\ref{cor:first-zero-computable}, we can use the layerwise computability of the hitting time of \ML random Brownian motion to a computable line to show that the solution to the Dirichlet problem is computable in the planar case when the boundary is computable and the condition on the boundary is both computable. Of course, we first need to specify what we mean by that. For example, even assuming that the boundary is a curve -- which it might not be, think for example of an open disk with a smaller disk inside removed -- there are several notion of computable curve we can take, see~\cite{RettingerZ2012}. We will take a very general notion of computability (in the case of curve, it is the most general studied in~\cite{RettingerZ2012}): We assume that~$\partial U$ is computable in the sense that there exists a computable sequence $(C_n)_{n \in \N}$ such that for all~$n$, $C_n$ is a finite set of squares in the 2-dimensional grid $2^{-n} \Z \times 2^{-n} \Z$ whose union is connected, contains~$\partial U$, and every point inside this union of squares is at distance at most $2^{-n+2}$ of the boundary. To formalise the fact that the condition $\phi$ is computable, we assume that there is a uniformly computable family $(\phi_n)_{n \in \N}$, where each~$\phi_n$ is a function which assigns a real value to each square~$\mathbf{c}$, in such a way that this value is within $\epsilon(n)$ of the values of $\phi$ on~$\partial U \cap \mathbf{c}$, and the values of two adjacent squares are within $\epsilon(n)$ of each other, $\epsilon$ being a computable function which tends to $0$ computably in~$n$.

\begin{theorem}[Computable Dirichlet Problem]\label{thm:CDP}
Let $U$ be a bounded domain whose boundary $\partial U$ satisfies the Poincar\'{e} cone condition and $\phi$ a condition on the boundary. Assume $\partial U$ and $\phi$ are computable in the sense described above. Then the solution to the Dirichlet problem -  the unique, continuous function $u: \overline{U} \to \mathbb{R}$ harmonic on $U$ such that $u(x) = \phi(x)$ for all $x \in \partial U$ - is computable. 
\end{theorem}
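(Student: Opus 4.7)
The strategy is to express $u(x)$ as an integral of a bounded layerwise computable function over Wiener space and apply Theorem~\ref{thm:layerwise-integral}. Using the shift $B \mapsto x + B$, we rewrite
\[
u(x) = \int_{C([0,\infty),\R^2)} \phi\bigl((x+B)(\tau_x(B))\bigr)\, d\Pr_0(B),
\]
where $\tau_x(B) = \inf\{t > 0 : x + B(t) \in \partial U\}$. Since $\phi$ is continuous on the compact set $\partial U$, its range is bounded by a computable constant (readable from the data $(\phi_n)$). Thus it suffices to show that the integrand is layerwise computable in $B$ uniformly in $x$; then Theorem~\ref{thm:layerwise-integral} applied uniformly yields the computability of $u(x)$ as a function of $x$.

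The central step is to prove that $\tau_x$ is layerwise computable, which is the planar analogue of Corollary~\ref{cor:first-zero-computable}. Given the dyadic approximations $C_n$ of $\partial U$, let $V_n = \bigcup C_n$, and define the lower-semicomputable entry time $\sigma_n(B) = \inf\{t : x + B(t) \in V_n\}$. Clearly $\sigma_n \leq \tau_x$, and $\sigma_n \nearrow \tau_x'$ for some $\tau_x' \leq \tau_x$. To show $\tau_x' = \tau_x$ layerwise, I would use the Poincar\'e cone condition: by compactness of $\partial U$ we can take computable constants $\alpha, h > 0$ such that at every boundary point a cone of opening $\alpha$ and height $h$ lies in $U^c$. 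The classical Brownian cone-exit estimate (Morters--Peres, Lemma~3.19) then bounds the probability that the path, once in the $2^{-n+2}$-neighborhood of $\partial U$, fails to leave $U$ within time $2^{-n}$: this probability is at most $q^n$ for some computable $q<1$. Summing over $n$ produces a Solovay test, so for every ML-random $B$ one gets $\tau_x - \sigma_n \to 0$ with a layerwise computable rate (by inspecting the randomness deficiency). Combined with the modulus of continuity from Proposition~\ref{prop:modulus-of-continuity-2}, this gives layerwise convergence $(x+B)(\sigma_n) \to (x+B)(\tau_x)$, which lives in a computable neighborhood of $\partial U$ and can therefore be evaluated against $\phi$ via the $\phi_n$ with an effective error bound.

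Having established that $B \mapsto \phi\bigl((x+B)(\tau_x(B))\bigr)$ is bounded and layerwise computable uniformly in $x$, Theorem~\ref{thm:layerwise-integral} gives that $u(x)$ is computable uniformly in $x$, i.e., $u$ is a computable function on $\overline{U}$ (its continuity on $\overline{U}$ being guaranteed by Kakutani's Theorem~\ref{Kakutani}). The main obstacle I expect is formalising the layerwise rate of convergence $\tau_x - \sigma_n \to 0$: the cone estimate controls this in probability, but to extract a pointwise, layerwise rate on ML-random paths one must assemble the cone-exit estimates across all scales $n$ into a single integrable test in the sense of G\'acs, then read off the convergence rate from the layer index of $B$. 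The uniformity in $x$ is not hard once the estimates are phrased in terms of the distance from $x$ to $\partial U$ (itself computable from $x$ and the $C_n$), since the cone parameters $\alpha, h$ are uniform over $\partial U$.
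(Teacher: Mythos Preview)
Your approach differs from the paper's. The paper proceeds in two stages. First (Proposition~\ref{prop:piecewise-linear}) it treats the special case where $\partial U$ is ``squared'' --- a finite union of axis-parallel segments with rational endpoints --- and there it does exactly what you propose: the hitting time to each segment reduces coordinatewise to the one-dimensional zero-finding of Corollary~\ref{cor:first-zero-computable} (via Lemma~\ref{lemma:line-segment}), so $\tau_B(\partial U)$ is layerwise computable and Theorem~\ref{thm:layerwise-integral} gives $u$. Second, for a general computable boundary the paper does \emph{not} attempt to compute $\tau_x$. Instead it builds a squared inner domain $V_n\subseteq U$ from the grid squares disjoint from $C_n$, reads a boundary condition $\psi$ on $\partial V_n$ off the data $\phi_n$, and solves the squared problem on $V_n$ to get $v_n$. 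The passage from $v_n$ to $u$ is then purely classical: $u-v_n$ is harmonic on $V_n$ and $O(\epsilon(n)+2^{-n})$ on $\partial V_n$, so the maximum principle gives the same bound throughout $V_n$, uniformly in $x$, with no further randomness argument.

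Your direct route via a cone-exit Solovay test is plausible but has two soft spots you underplay. The cone parameters $\alpha,h$ are not part of the input $(C_n)$; compactness of $\partial U$ gives uniform (hence rational) parameters only non-constructively, so the exponent in your $q^n$ bound is not computable from the data. Since the theorem only asserts computability of $u$, not uniform computability from $(C_n),(\phi_n)$, this is survivable but worth flagging. More seriously, the uniformity in $x$ you call ``not hard'' is where the real work hides. Your test $\{B:\tau_x-\sigma_n>2^{-n}\}$ is indexed by $x$, and to integrate via Theorem~\ref{thm:layerwise-integral} you need the layerwise index of $B\mapsto \phi((x+B)(\tau_x))$ to be computable from $x$: that is, from the layer $k$ of $B$ alone you must produce a threshold $n_0(k,x)$ past which the bound holds, uniformly. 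The measure bound $q^n$ is uniform in $x$, and the least $n$ with $x\notin V_n$ is computable from $d(x,\partial U)$, so this can be made to work; but you also need the events themselves to be uniformly $\Sigma^0_1$ in $(n,x)$, which requires checking that $\sigma_n$ (a hitting time to a closed polygonal set) is genuinely computable, not merely lower-semicomputable, on random paths. None of this is insurmountable, but the paper's maximum-principle step bypasses it completely: the estimate $\sup_{V_n}|v_n-u|\le\sup_{\partial V_n}|v_n-u|$ is automatically uniform over $x\in V_n$ and involves no probabilistic argument at all.
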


The rest of the section will be devoted to proving this result. The plan is to prove the theorem in two steps:
\begin{itemize}
\item[(i)] First, we prove it in the particular case where $\partial U$ `squared', i.e., is made of a finite number of vertical and horizontal (i.e, parallel to the x-axis or y-axis) segments with rational endpoints, the list being given explicitly. As we will see, in this case, we can apply the results of the previous sections to compute the first time a \ML random path hits the boundary.   
\item[(ii)] Then we extend it to all computable functions~$\gamma$ by approximation. That is, we approximate $\partial U$ by a squared boundary with arbitrary precision and apply Step~1. 
 
\end{itemize}

Let us first see how to apply the results of the previous section to planar Brownian motion.

\begin{lemma}
For $B(t)$ a Martin-L\"{o}f random planar Brownian motion started at a computable point, seeing when $B(t)$ hits the line parallel to either the $x-axis$ or $y-axis$, if the line is computable, is layerwise decidable in $B(t)$. 
\end{lemma}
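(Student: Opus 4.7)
The plan is to reduce the problem to a one-dimensional level-set problem and then adapt the proof of Theorem~\ref{thm:ZB-layerwise-recurive}. Writing the planar path as $B(t) = (x_0 + W_1(t),\, y_0 + W_2(t))$ with $(x_0,y_0)$ computable and $W_1, W_2$ mutually \ML random standard Brownian motions (which exist by van Lambalgen applied to the theorem about $d$-dimensional \ML random paths), a line $y = c$ is hit at time $t$ if and only if $W_2(t) = c - y_0$, and similarly a line $x = c'$ is hit iff $W_1(t) = c' - x_0$. Since $c, c', x_0, y_0$ are computable, the shifted level $\ell$ is computable. So it suffices to prove: for any standard \ML random Brownian motion $W$ and any computable $\ell \in \R$, the set $H_\ell = \{t \geq 0 : W(t) = \ell\}$ is recursive layerwise in $W$, uniformly in $\ell$.

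For $\ell = 0$ this is exactly Theorem~\ref{thm:ZB-layerwise-recurive} (extended from $C[0,1]$ to $C[\R^{\geq 0}]$ via~(\ref{eq:infinite-time})). For general computable $\ell$, I would mirror the three ingredients used to prove that theorem. First, the analog of Proposition~\ref{prop:max-not-computable}: the local maximum $\max(W,a,b)$ on a rational interval has a computable continuous positive density (the computation is the same as in the original proof, up to an additive shift), hence it is a \ML random real and cannot equal the computable value $\ell$; the same holds for the local minimum. Second, the analog of Proposition~\ref{prop:no-computable-zero}: for any rational $q > 0$, $W(q) \neq \ell$. This follows from a Solovay test, bounding $\Pr(W \text{ hits } \ell \text{ on } [q, q+2^{-k}])$ by $O(2^{-k/2})$ using the reflection principle (or equivalently Proposition~\ref{prop:hitting-prob-1} together with Lemma~\ref{lem:bigger-x-smaller-prob}), summable in $k$, combined with the $\Sigma^0_1$ nature of the crossing event. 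Third, with these in hand, one has the key equivalence
\[
H_\ell \cap (a,b) = \emptyset \iff \max(W,a,b) < \ell \ \text{or}\ \min(W,a,b) > \ell,
\]
whose right-hand side is $\Sigma^0_1$ layerwise in $W$ by Proposition~\ref{proposition:max-layerwise-computable}, while $H_\ell \cap (a,b) \neq \emptyset$ is $\Sigma^0_1$ uniformly in $W$ because, thanks to the first ingredient, it reduces to $W$ attaining values both strictly above and strictly below $\ell$ on the interval. Together these give a layerwise decision procedure for $H_\ell \cap (a,b) = \emptyset$ from $(a,b,\ell,W)$.

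The boundary case where the line passes through the starting point (i.e., $\ell = 0$) is the situation already covered by Theorem~\ref{thm:ZB-layerwise-recurive}, the non-isolation of the starting time being ensured by Proposition~\ref{prop:origin-not-isolated}. The main obstacle I anticipate is the uniformity and correct constants in the hitting-probability estimate for a nonzero computable level $\ell$, needed to turn the analog of Proposition~\ref{prop:no-computable-zero} into a genuine Solovay/Martin-L\"of test; this should go through by conditioning on $W(q)$ via the strong Markov property (Proposition 2.2 of the paper) and using the one-point hitting estimate on $[0,\epsilon]$ starting from a non-zero point, but care is needed to make the bound uniform in the computable parameter $\ell$ and in the rational endpoints. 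Once this is verified, the proof of the lemma is complete.
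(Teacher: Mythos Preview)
Your proposal is correct and follows essentially the same approach as the paper: project to one coordinate to reduce to a one-dimensional level-crossing problem for a computable level~$\ell$, and then observe that the proof of Theorem~\ref{thm:ZB-layerwise-recurive}/Corollary~\ref{cor:first-zero-computable} carries over verbatim with~$0$ replaced by~$\ell$. The paper's proof is a one-line hand-wave (``which has exactly the same proof as Corollary~\ref{cor:first-zero-computable}''), whereas you have actually spelled out the three ingredients that need to be re-checked---which is arguably more honest, and your worries about uniformity in the hitting-probability estimate are unfounded since Lemma~\ref{lem:bigger-x-smaller-prob} gives the needed bound directly.
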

\begin{proof}
Without loss of generality, say we are looking for the first time $X(t) = \alpha$, for $B(t) = (X(t), Y(t))$, $\alpha$ computable, $B(t)$ started at $q = (q_x, q_y) \in \Q$.  This is equivalent to looking for the first time $X'(t) = X(t) - q_x$, a standard 1-dimensional Brownian motion, crosses $q_x - \alpha$, which has exactly the same proof as Corollary~\ref{cor:first-zero-computable} above. 
\end{proof}

\begin{lemma}\label{lemma:line-segment}
For $B(t)$ a Martin-L\"{o}f random planar Brownian motion started at a computable point, the first time $B(t)$ passes through a vertical or horizontal line segment with computable endpoints is layerwise computable in $B(t)$. 
\end{lemma}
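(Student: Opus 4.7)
Without loss of generality assume the segment is $S = [a, b] \times \{c\}$ with $a, b, c$ computable and that the starting point $(q_x, q_y)$ is not on $S$ (otherwise $\tau = 0$). I will show that $\tau := \inf\{t \geq 0 : B(t) \in S\}$ is layerwise computable by proving that both $\{\tau > v\}$ and $\{\tau < v\}$ are $\Sigma^0_1$ predicates in $B$, uniformly in the rational $v > 0$ and layerwise in $B$. Since $\tau$ is almost surely finite (hence finite for ML-random $B$), combining upper and lower semi-computability yields a layerwise procedure computing $\tau$ to arbitrary precision.

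The lower semi-computability is the easy direction: $\tau > v$ holds iff $\min_{t \in [0, v]} d(B(t), S) > 0$, and this minimum is a layerwise computable real by the same discretization argument as in Proposition~\ref{proposition:max-layerwise-computable}. The harder direction is showing that $\tau < v$ is $\Sigma^0_1$. My claim is that, for ML-random $B$, $\tau < v$ is equivalent to the following \emph{rational transversal crossing} condition: there exist rationals $0 < s_1 < s_2 < v$ and $a < t_1 < t_2 < b$ such that (i) $\min_{s \in [s_1, s_2]} X(s) > t_1$ and $\max_{s \in [s_1, s_2]} X(s) < t_2$, and (ii) $Y(s_1) > c$ and $Y(s_2) < c$, or vice versa. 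Each clause is $\Sigma^0_1$ in $B$ (using Proposition~\ref{proposition:max-layerwise-computable} for the extrema of $X$ on $[s_1, s_2]$, plus the layerwise continuity of $Y$), and their disjunction is an $\Sigma^0_1$ enumeration over quadruples of rationals. The implication from the crossing condition to $\tau < v$ is immediate by the intermediate value theorem applied to $Y - c$: the sign change forces some $s \in [s_1, s_2]$ with $Y(s) = c$, and at such $s$, $X(s) \in (t_1, t_2) \subset (a, b)$, so $B(s) \in S$ and $\tau \leq s < v$.

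For the converse, suppose $\tau < v$, so $B(\tau) \in S$. Since $(a, c)$ and $(b, c)$ are computable points, Corollary~\ref{cor:no-computable-pts} forces $X(\tau) \neq a$ and $X(\tau) \neq b$, so $X(\tau) \in (a, b)$ strictly. By continuity, $X$ stays in some rational sub-interval $(t_1, t_2) \subsetneq (a, b)$ on a closed window $[\tau - \eta, \tau + \eta] \subset (0, v)$. The coordinate $Y$ is a ML-random one-dimensional Brownian motion (up to the computable starting shift), and $Y(\tau) = c$; applying the reasoning of Corollary~\ref{cor:zero-eq-crossing} to $Y - c$ on $[\tau - \eta, \tau + \eta]$ (the computable value $c$ cannot be a local extremum of $Y$ by Proposition~\ref{prop:max-not-computable}) yields times in this interval where $Y > c$ and where $Y < c$, which by continuity persist on open neighborhoods and hence contain rational witnesses $s_1 < s_2$. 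I expect the main obstacle to be precisely this converse step: verifying that for ML-random planar $B$, every hit of the segment is automatically ``transversal'' in the rational sense above. This hinges on two ingredients from earlier sections—the avoidance of computable points by ML-random planar paths (Corollary~\ref{cor:no-computable-pts}), which rules out tangential contact at the two endpoints of $S$, and the sign-change property of ML-random one-dimensional paths at their zeros (Corollary~\ref{cor:zero-eq-crossing}), which rules out tangential contact in the interior of $S$.
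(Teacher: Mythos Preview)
Your proof is correct and takes a genuinely different route from the paper's. The paper runs an explicit iterative algorithm: repeatedly compute the next crossing of the full line $y=c$ (layerwise computable by the one-dimensional theory), check whether the crossing lands in the segment, and if not, use the layerwise modulus of continuity (Proposition~\ref{prop:modulus-of-continuity-2}) to advance by a positive rational amount of time during which $X$ cannot have moved far enough to reach the segment; termination comes from Corollary~\ref{cor:no-computable-pts}, since the off-segment crossings stay a fixed positive distance $\epsilon_L>0$ from the endpoints, which lower-bounds the time increments. You instead characterise both $\{\tau>v\}$ and $\{\tau<v\}$ directly as layerwise $\Sigma^0_1$ predicates. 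The $\{\tau>v\}$ side is in fact outright $\Sigma^0_1$, since $B\mapsto\min_{t\in[0,v]} d(B(t),S)$ is a continuous functional on $C[0,v]$; the transversal-crossing characterisation for $\{\tau<v\}$ is the real content, and your verification that every hit of $S$ by a ML-random path is automatically transversal---using exactly the two ingredients you flag, Corollary~\ref{cor:no-computable-pts} for the endpoints and Proposition~\ref{prop:max-not-computable}/Corollary~\ref{cor:zero-eq-crossing} for the interior---is sound. Your approach buys conceptual economy (no iteration, no termination argument); the paper's buys a concrete procedure that simultaneously produces the hitting \emph{point} $B(\tau)$, which is what is used downstream in Proposition~\ref{prop:piecewise-linear}, though of course you recover that immediately once $\tau$ is in hand.
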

\begin{proof}

To layerwise computably find the first crossing time through the line segment, we run the following algorithm. Let $r_0 = 0$ be the first time considered. The first crossing of $B(t)$ through the line $y = \alpha$ after $r_0$ is layerwise computable in~$B$, call this time $t_1$. If $t_1$ falls within the line segment, we are done. 
 
Assuming $t_1$ crosses the line away from the line segment, we will call the distance from the line segment $\epsilon_1>0$. In order for $B(t)=(X(t), Y(t))$ to hit the line segment after $t_1$, $X(t)$ must change by more than $\epsilon_1$. By Proposition~\ref{prop:modulus-of-continuity-2}, we can find an $h_1$, layerwise in $X(t)$, such that this does not occur in $(t_1, t_1+h_1)$. We choose $r_1 \in (t_1+h_1/2, t_1 + h_1)$ to be any rational time, and then continue the algorithm by finding the next crossing time $t_2>t_1$ through the line $y = \alpha$.

  Because the line segment has computable vertices, $B(t)$ will not cross through the vertex of the line by Corollary~\ref{cor:no-computable-pts}. This tells us that before hitting the line segment, there is a closest value $\epsilon_L>0$ away from the vertex of line segment such that $B(t)$ crosses no closer than $\epsilon_L$ to the vertex. As above, this $\epsilon_L$ is associated with a time $h_L$ within which $X(t)$ will not cross the line segment. As each $\epsilon_n\geq\epsilon_L$,  each $h_n \geq h_L>0$, so we are incrementing our time steps by at least $h_L/2$ at each stage. Therefore we are taking time steps small enough so that we do not miss the first crossing time, but time steps which are always bounded away from 0, so we must eventually find the first crossing time of $B(t)$ through the line segment. 
  
  \end{proof}

We can now prove our theorem in the restricted case of an explicitly given squared boundary.  

\begin{proposition}\label{prop:piecewise-linear}
If $U$ is a planar region such that $\partial U$ is an explicitly given squared boundary and $\phi$ is a computable function on $\partial U$, then the solution to the Dirichlet problem is computable for $U$.
\end{proposition}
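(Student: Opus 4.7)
The plan is to apply Kakutani's formula (Theorem~\ref{Kakutani}), which represents the solution as
\[
u(x) = \mathbb{E}_x\big[\phi(B(\tau(\partial U)))\big],
\]
and then show that this expectation is computable uniformly in~$x$ by expressing the integrand as a bounded layerwise computable function of a \ML random planar path and invoking Theorem~\ref{thm:layerwise-integral}. Throughout, I will first treat rational $x \in U$; uniformity in an arbitrary $x \in U$ follows by relativization, and extension to $\overline{U}$ will be handled by continuity at the end.

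First I would reduce to a standard Brownian motion starting at the origin by translation: starting $B$ at $x$ is equivalent to running a standard planar Brownian motion $B'$ and considering the hitting time $\tau_x$ of $\partial U - x$, which is again an explicitly given squared boundary with endpoints computable (uniformly) in~$x$. The main technical step is then to show that $\tau(\partial U)$ is layerwise computable in~$B$. Write $\partial U = L_1 \cup \cdots \cup L_k$, where each $L_i$ is a rational horizontal or vertical segment. I would adapt the iterative algorithm of Lemma~\ref{lemma:line-segment} to the \emph{whole} collection: at each stage, layerwise compute the next time~$B$ crosses any of the finitely many lines carrying the segments $L_i$ (a minimum of finitely many layerwise computable quantities); test whether that crossing lands on one of the $L_i$, and if so stop; otherwise record the positive distance $\epsilon$ from the crossing point to the nearest segment, use Proposition~\ref{prop:modulus-of-continuity-2} to produce a rational time $h>0$ within which $B$ cannot displace by more than~$\epsilon$, advance past time $h/2$, and iterate. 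Because ML random planar paths avoid every computable point (Corollary~\ref{cor:no-computable-pts}), in particular the finitely many rational endpoints of the $L_i$, the sequence of distances to $\partial U$ is bounded away from zero along the finitely many ``misses'' before the first hit, so the increments $h_n$ are uniformly bounded below and the algorithm terminates at $\tau(\partial U)$.

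Once $\tau := \tau(\partial U)$ is layerwise computable in~$B$, the exit point $B(\tau)$ is layerwise computable by continuity of evaluation, and the computability of $\phi$ on the compact set $\partial U$ makes $\phi(B(\tau))$ a layerwise computable, uniformly bounded (by $\|\phi\|_\infty$) function on $C([0,\infty),\mathbb{R}^2)$. Theorem~\ref{thm:layerwise-integral} then yields that the expectation $u(x) = \int \phi(B(\tau))\, d\Pr_x(B)$ is computable, uniformly in~$x \in U$. For $x \in \partial U$ we have $u(x) = \phi(x)$ by Kakutani's theorem, and by uniqueness $u$ is continuous on $\overline{U}$; uniform computability on the dense set of rational interior points together with a computable modulus of continuity (derived from the classical regularity of $u$ under the Poincar\'e cone condition together with the explicit geometry of the squared boundary) extends~$u$ to a computable function on $\overline{U}$. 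The main obstacle is the extension of Lemma~\ref{lemma:line-segment} to a finite union of segments and the verification that the lower bound on distance persists near the (finitely many) vertices of the squared boundary; all other steps are direct applications of the layerwise computability machinery of Section~\ref{sec:background}.
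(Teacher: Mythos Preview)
Your proposal is correct and follows essentially the same approach as the paper: establish layerwise computability of $\tau(\partial U)$ via Lemma~\ref{lemma:line-segment}, then apply Theorem~\ref{thm:layerwise-integral} to Kakutani's formula. The only minor difference is that the paper obtains $\tau(\partial U)$ directly as the minimum of the (layerwise computable) first hitting times to the finitely many individual segments, whereas you re-run the iterative algorithm against all segments simultaneously; both work, and your added remarks on the extension to $\overline{U}$ go slightly beyond what the paper spells out.
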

\begin{proof}

By Lemma \ref{lemma:line-segment} the first hitting times on each line segment are computable uniformly in starting point $x$ and layerwise in $B$, and $\delta U$ is composed of finitely many line segments with computable endpoints, so the first hitting time $\tau_B(\partial U)$ to the boundary is layerwise computable in $B$, uniformly in the starting point. Since~$\phi$ is computable, $\phi(\tau_B(\partial U))$ is computable uniformly in starting point $x$ and layerwise in~$B$. 

By Theorem~\ref{thm:layerwise-integral}, the expression
$$
u(x) = \mathbb{E}_x\left[\phi(B(\tau_B(\partial U)))\right], \,\,\,\,\,\,\,\, for\,\, x \in \overline{U}
$$
is computable, uniformly in~$x$, and by Kakutani's classical result~\ref{Kakutani}, this is the solution to the Dirichlet problem. 
\end{proof}


Now, all we need to do is extend this last proposition to the general case. 

\begin{proof}[Proof of Theorem~\ref{thm:CDP}]
Let~$u$ be the solution of Dirichlet's problem (we don't know yet it is computable, but we know it exists from the classical theorem) for condition~$\phi$ on $\partial U$. Given a point~$x \in U$, we first compute, for all~$n$, an approximation $C_n$ of $\partial U$ which are squares of $2^{-n} \Z \times 2^{-n} \Z$. Compute the largest set~$Q$ of squares of $2^{-n} \Z \times 2^{-n} \Z$ which (a) contains the square $\mathbf{c}$ which contains~$x$, (b) does not contain any square in~$C_n$ and (c) is $4$-connected, i.e., every square of $Q_n$ should share an edge with another member of~$Q_n$ (unless there is only one square). Call~$V_n$ the interior of the union of the squares in $Q_n$. Observe that $V_n$ must be contained in~$U$, since it contains a point in~$U$, is connected, and is disjoint from $\partial U$ (if it were not contained in~$U$, then $V \setminus \bar{U}$ and $U \cap V$ would be two non-empty open sets partitioning~$V$, contradicting its connectedness). Observe also that each segment of $\partial V_n$ must be the edge of a square $\mathbf{c} \in C_n$, so we can compute a condition $\psi$ on $\partial V_n$ which is equal to $\phi_n(\mathbf{c})$ on the edge of $\phi_n(\mathbf{c})$ (up to smoothing it out around corners to ensure continuity).\\ 

\noindent \textit{Claim}. For every point~$z \in \partial V_n$, $|\psi(z)-u(z)|< O(\epsilon(n)+2^{-n})$. Indeed, let~$\mathbf{c}$ be the member of $C_n$ which has~$z$ on its edge. Every point of $\mathbf{c}$ is at distance at most $2^{-n+2}$ of the boundary, so there is a square $\mathbf{c'}$ at distance $O(2^{-n})$ of $\mathbf{c'}$ which contains some point $z' \in \partial U$, and the value of $\phi_n(\mathbf{c'})$ is within $\epsilon(n)$ of the value of~$u(z')$. Thus,
\begin{eqnarray*}
|\psi(z)-u(z)|  & \leq & |\psi(z)-\phi_n(\mathbf{c'})| + |\phi_n(\mathbf{c'})-u(z')|+|u(z')-u(z)|\\
&  \leq & O(\epsilon(n)) + \epsilon(n) + O(2^{-n}) 
\end{eqnarray*}
(for the last term, we use the fact that $|z'-z| = O(2^{-n})$ and the fact that~$u$ is harmonic, hence Lipschitz), the constants in the $O$-notations not depending on~$n$, $z'$, $z$. To be precise, we need to add the possible error induced by the `smoothing around corners', but it itself is bounded by $O(\epsilon(n)+2^{-n})$ since the $\phi_n$-values of two adjacent segments of $\partial V_n$ are $O(\epsilon(n)+2^{-n})$-close to each other. Thus, applying the restricted case of our theorem (Proposition~\ref{prop:piecewise-linear}) to $\psi$ and $V_n$, we can compute the value $v_n(x)$ of the solution to Dirichlet's problem on $\partial V_n$ with condition~$\psi$. But since $|\psi-u|=|v_n-u|$ is bounded by $O(\epsilon(n)+2^{-n})$ on $\partial V_n$, this implies that $|v_n-u|$ is also bounded by $O(\epsilon(n)+2^{-n})$ on all of $V_n$ (by the maximum principle, since $v_n-u$ is harmonic). Thus, we have effectively obtained an approximation of $u(x)$ with precision $O(2^{-n}+\epsilon(n))$ uniformly in~$n$ and $x$, which means that $u$ is computable.\\
\end{proof}

%
%
%
%

\noindent \textbf{Acknowledgements.} We would like to thank Jason Rute for useful discussions on this topic, and an anonymous MathOverflow contributor for giving us the above elegant proof of Lemma~\ref{lem:bigger-x-smaller-prob}.

\bibliographystyle{plain}
\bibliography{BrownianMotionBib}

\end{document}